\documentclass[11pt,a4paper]{article}
\usepackage[T1]{fontenc}
\usepackage[utf8]{inputenc}
\usepackage[english]{babel}
\usepackage[margin=1in]{geometry}
\usepackage{amsmath}

\usepackage{graphicx}        

\usepackage{amsfonts,amssymb}
\usepackage[matrix,arrow,curve]{xy}
\usepackage[numbers]{natbib}
\usepackage[hidelinks]{hyperref}
\usepackage{amsthm}

\newtheorem{theorem}{Theorem}[section] 
\newtheorem{definition}[theorem]{Definition} 
\newtheorem{lemma}[theorem]{Lemma}
\newtheorem{remark}[theorem]{Remark}
\newtheorem{proposition}[theorem]{Proposition}
\newtheorem{example}[theorem]{Example}
\newtheorem{corollary}[theorem]{Corollary}
\begin{document}

\title{Bousfield--Kan Completions of Subcontractible Presentations}
\author{Andrey Mikhovich\thanks{Moscow Center for Fundamental and Applied
Mathematics, Lomonosov Moscow State University, GSP-1, Leninskie Gory,
Moscow 119991, Russian Federation. Email: \texttt{amikhovich@gmail.com}.}}
\date{}
\maketitle

\begin{center}
\textit{Dedicated to Professor Alexander S. Mishchenko on the occasion of his
85th birthday, with deepest respect and admiration.}
\end{center}




\begin{abstract}
We study Bousfield--Kan completions through the interaction of free
simplicial resolutions, their filtration spectral sequences, and a
noncommutative arithmetic square.  For every free discrete simplicial
group of finite type, we express its integral pronilpotent completion
as the homotopy pullback of its rational prounipotent completion and
the product of its pro-$p$ completions over an explicit adelic
simplicial group.  The adelic entry is formed by taking restricted
products at finite nilpotent stages and then their inverse limit;
no nilpotency assumption on the original group of components is required.
Finite subpresentations of contractible presentations provide an
explicit application of this construction.  Independence of the
specified relators makes the positive-degree terms of the rational
and mod-$p$ filtration spectral sequences vanish, with convergence
verified on the quotient towers.  Continuous comparison of free
simplicial resolutions then realizes, in characteristic zero, the
equivalence with a constant free prounipotent group by morphisms and
homotopies in that category.  For the corresponding presentation
complex $K$ we obtain $R_\infty K\simeq K(F_R(Z),1)$ for
$R=\mathbb Q,\mathbb F_p,\mathbb Z$, where $Z$ indexes a complementary
basis and $F_R(Z)$ denotes, respectively, the rational points of a
free prounipotent group, a free pro-$p$ group, or a free pronilpotent
group.  Compatible contractions at the nilpotent stages identify all
four entries of the arithmetic square in this case.
\end{abstract}



\section{Introduction}\label{s0}
The purpose of this paper is to describe how rational, pro-$p$, and
integral completion interact with the simplicial structure of a group
presentation.  Three constructions organize the argument: comparison
of free simplicial resolutions, the spectral sequences of their
lower-central and Zassenhaus filtrations, and a noncommutative
arithmetic square.  Finite subpresentations of contractible
presentations provide a concrete setting in which the interaction of
these constructions can be computed explicitly.  We retain the chosen
relators, construct comparison maps and homotopies in the completed
categories, and describe the passage from nilpotent quotients to the
integral completion.  The arithmetic-square construction itself
applies to every free discrete simplicial group of finite type.

A presentation $(X\mid Y)$ of a discrete group $G$ determines an exact sequence
\begin{equation}\label{eq1}
1 \rightarrow R \rightarrow \Phi \xrightarrow{\pi} G \rightarrow 1
\end{equation}
in which $\Phi=\Phi(X)$ is the free discrete group with a set $X$ of generators, and $R$
is a normal subgroup in $\Phi$ normally generated by a set $Y\subset R$ of defining relations. 
We use the notation $(X\mid Y)$ for a presentation with generating set $X$
and chosen set of relators $Y$.  The normal closure of $Y$ in $\Phi(X)$ will
be denoted by $R$.  Thus \eqref{eq1} is the extension determined by the
presentation, rather than the presentation itself.
By a subpresentation $(X_1\mid Y_1)$ we mean a choice of subsets
$X_1\subseteq X$ and $Y_1\subseteq Y$ such that the selected relators
are words in $\Phi(X_1)$.  A presentation is called \emph{aspherical}
when its associated two-dimensional CW-complex $K(X\mid Y)$ has
$\pi_2K(X\mid Y)=0$.  The construction is recalled in
Section~\ref{r1.2}; see also \cite{Hut}.

A presentation is called \emph{contractible} when its presentation complex is
contractible.  A \emph{subcontractible presentation} is a finite
subpresentation of a contractible presentation.

J.~H.~C. Whitehead's asphericity conjecture asserts that every connected
subcomplex of an aspherical two-dimensional CW-complex is aspherical; see
\cite{Ro} and the references therein.  Howie showed that it is enough to
consider subcomplexes of contractible two-complexes \cite{Ho83}.

\subsection{Presentations and their second homotopy groups}\label{r1.2}
Let $(X\mid Y)$ be a presentation of $G\cong\Phi(X)/R$, where $R$ is
the normal closure of the relators $r_y$, $y\in Y$.  Its standard
two-dimensional presentation complex $K(X\mid Y)$ has one $0$-cell, one
$1$-cell $e_x^1$ for each $x\in X$, and one $2$-cell $e_y^2$ attached along
a loop representing $r_y$ for each $y\in Y$.
Let $\widetilde K$ denote the universal cover.  Covering-space theory and the
Hurewicz theorem give natural isomorphisms
\[
 \pi_2K(X\mid Y)\cong\pi_2\widetilde K\cong H_2(\widetilde K;\mathbb Z).
\]

It turns out that the cellular chain complex $C_*(\widetilde K)$ is
$G$-isomorphic to the complex associated with the chosen presentation
\cite[Proposition~9]{BH1}.  We use right $\mathbb ZG$-modules throughout
this subsection and obtain the exact sequence
\begin{equation}\label{1.1}
0\longrightarrow\pi_2K(X\mid Y)\longrightarrow C_2(X\mid Y)
\xrightarrow{d_2}C_1(X\mid Y)\xrightarrow{d_1}C_0(X\mid Y)
\xrightarrow{\epsilon}\mathbb Z\longrightarrow0,
\end{equation}
where $C_0=\mathbb ZG$, $C_1=\bigoplus_{x\in X}e_x^1\mathbb ZG$,
and $C_2=\bigoplus_{y\in Y}e_y^2\mathbb ZG$.
Write $\pi$ also for the induced map $\mathbb Z\Phi\to\mathbb ZG$.
For $x\in X$, let $D_x^{\mathrm r}$ denote the right Fox derivative,
specified by
\[
 D_x^{\mathrm r}(x')=\delta_{x,x'},\qquad
 D_x^{\mathrm r}(uv)=D_x^{\mathrm r}(u)v+
                         \epsilon(u)D_x^{\mathrm r}(v)
 \quad(u,v\in\mathbb Z\Phi).
\]
The right-handed Fox identity is
\[
 a-\epsilon(a)=\sum_{x\in X}(x-1)D_x^{\mathrm r}(a)
 \qquad(a\in\mathbb Z\Phi);
\]
see \cite[Chapter~4]{BH1}, \cite[Section~4.2]{FENN}.
With these conventions the cellular boundaries are
\[
 d_1(e_x^1)=1-\pi(x),\qquad
 d_2(e_y^2)=\sum_{x\in X}e_x^1\,\pi(D_x^{\mathrm r}(r_y)).
\]
Indeed, $d_1d_2(e_y^2)=-\pi(r_y-1)=0$.
The image of $d_2$ is the relation module $R/[R,R]$, with right action
$\overline r\cdot g=\overline{\widetilde g^{-1}r\widetilde g}$ for
any lift $\widetilde g\in\Phi$ of $g\in G$.  Its Magnus embedding is
\[
 i:R/[R,R]\hookrightarrow\bigoplus_{x\in X}e_x^1\mathbb ZG,
 \qquad
 i(\overline r)=\sum_{x\in X}e_x^1\,\pi(D_x^{\mathrm r}(r));
\]
see \cite[Corollary~1]{BH1}.

For comparison, put $\Lambda=\mathbb Z\Phi$, let $\mathfrak f$ be
its augmentation ideal, and let
$\mathfrak r=\ker(\Lambda\to\mathbb ZG)$.
Gruenberg's free resolution begins
\begin{equation}\label{1.2}
\cdots\longrightarrow\mathfrak r^2/\mathfrak r^3
\longrightarrow\mathfrak f\mathfrak r/\mathfrak f\mathfrak r^2
\longrightarrow\mathfrak r/\mathfrak r^2
\longrightarrow\mathfrak f/\mathfrak f\mathfrak r
\longrightarrow\mathbb ZG\longrightarrow\mathbb Z\longrightarrow0;
\end{equation}
see \cite[Theorem~2]{Gru}.  To describe its degree-two free module,
choose a free group basis $B$ of the subgroup $R$.  Then
\[
 \mathfrak r=\bigoplus_{b\in B}(1-b)\Lambda,
 \qquad
 \mathfrak r^2=\bigoplus_{b\in B}(1-b)\mathfrak r,
\]
so the classes of $1-b$, $b\in B$, form a right $\mathbb ZG$-basis
of $\mathfrak r/\mathfrak r^2$.
Likewise, $\mathfrak f/\mathfrak f\mathfrak r$ is free on the classes
of $1-x$, $x\in X$.  The maps
\[
 \tau_0=\operatorname{id}_{\mathbb ZG},\qquad
 \tau_1(e_x^1)=1-x,\qquad \tau_2(e_y^2)=1-r_y
\]
give a comparison in degrees zero through two; commutation with the
differentials follows from the right Fox identity.  In general the
chosen relators $r_y$ are only normal generators of $R$ in $\Phi$,
and $\tau_2$ need not be an isomorphism.

For a contractible presentation, the required basis statement follows
directly from cellular chains.
\begin{lemma}\label{l.1.3}
Let $(X\mid Y)$ be a contractible presentation.  The images of the relators
$r_y$, $y\in Y$, form a basis of the free abelian group
$\Phi(X)_{\mathrm{ab}}\cong\mathbb Z^{(X)}$.  In particular, if $X$ and $Y$
are finite, then $|X|=|Y|$ and the exponent-sum matrix is unimodular.
\end{lemma}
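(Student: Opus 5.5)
We will use the cellular chain complex of $K=K(X\mid Y)$ itself, not of its universal cover. The presentation complex has a single $0$-cell, so the integral cellular chain complex is
\[
 0\longrightarrow \mathbb Z^{(Y)}\xrightarrow{\ \partial_2\ }\mathbb Z^{(X)}\xrightarrow{\ \partial_1\ }\mathbb Z\longrightarrow 0 .
\]
Every $1$-cell is a loop at the base point, so $\partial_1=0$.

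\textbf{Identifying $\partial_2$.} The boundary $\partial_2(e_y^2)$ is the degree with which the attaching loop $r_y$ winds over each $1$-cell. This equals $\sum_{x} \epsilon(D_x^{\mathrm r}(r_y))\,e_x^1$, i.e.\ the vector of exponent sums of $r_y$. Equivalently, $\partial_2$ is the augmentation $\mathbb ZG\to\mathbb Z$ applied to $d_2$ in \eqref{1.1}, via $C_*(K)\cong C_*(\widetilde K)\otimes_{\mathbb ZG}\mathbb Z$. Under the identification $\mathbb Z^{(X)}\cong\Phi(X)_{\mathrm{ab}}$, sending $e_x^1$ to the class of $x$, the map $\partial_2$ sends $e_y^2$ to the image of $r_y$ in $\Phi(X)_{\mathrm{ab}}$. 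So $\partial_2$ is exactly the map determined by the images of the relators.

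\textbf{Using contractibility.} Since $K$ is contractible, $\widetilde H_*(K;\mathbb Z)=0$, so the reduced complex is acyclic.
\begin{itemize}
\item $H_1(K)=\mathbb Z^{(X)}/\operatorname{im}\partial_2=0$, because $\partial_1=0$. Hence $\partial_2$ is surjective.
\item $H_2(K)=\ker\partial_2=0$. Hence $\partial_2$ is injective.
\end{itemize}
Thus $\partial_2\colon \mathbb Z^{(Y)}\to\Phi(X)_{\mathrm{ab}}$ is an isomorphism of free abelian groups. This means precisely that the images of the $r_y$ are distinct and form a basis of $\Phi(X)_{\mathrm{ab}}\cong\mathbb Z^{(X)}$.

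\textbf{Finite case.} If $X$ and $Y$ are finite, an isomorphism $\mathbb Z^{Y}\cong\mathbb Z^{X}$ forces $|X|=|Y|$, since rank is invariant (tensor with $\mathbb Q$). Its matrix is the exponent-sum matrix, and an integer matrix representing an isomorphism of $\mathbb Z^n$ has an integer inverse, so its determinant is $\pm1$. Hence the exponent-sum matrix is unimodular.

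The only step needing care is the identification of the cellular boundary $\partial_2$ with the abelianized relator. One can either take it as the standard degree computation for a $2$-cell attached along a word, or deduce it from the formula for $d_2$ above by applying $\epsilon$ and using that $\epsilon(D_x^{\mathrm r}(w))$ is the exponent sum of $x$ in $w$. The latter follows by induction on word length from the product rule for $D_x^{\mathrm r}$ and $\epsilon(x^{-1})=1$. The remainder is formal homological algebra.
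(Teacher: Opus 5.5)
Your proof is correct and is essentially the paper's argument. Since $G=1$, the universal cover is $K$ itself. Exactness of its cellular chain complex then makes the exponent-sum map $\mathbb Z^{(Y)}\to\mathbb Z^{(X)}$ an isomorphism: the paper reads this off from the augmented sequence \eqref{1.1}, and you read it off from $H_1(K)=H_2(K)=0$ with $\partial_1=0$. Your verification that $\epsilon(D_x^{\mathrm r}(w))$ is the exponent sum, and your rank and determinant argument in the finite case, only spell out steps the paper leaves implicit.
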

\begin{proof}
Since $K(X\mid Y)$ is contractible and $G=1$, its augmented cellular chain
complex gives an exact sequence
\[
 0\longrightarrow\mathbb Z^{(Y)}\xrightarrow{\,\overline d_2\,}
 \mathbb Z^{(X)}\longrightarrow0.
\]
The matrix of $\overline d_2$ is the exponent-sum matrix
$\bigl(\epsilon(D_x^{\mathrm r}(r_y))\bigr)_{x,y}$.  Hence this homomorphism is an isomorphism.  In the finite case its matrix belongs to
$\mathrm{GL}_{|X|}(\mathbb Z)$.
\end{proof}
For a contractible presentation, $G=1$ and $R=\Phi$, so
$\mathfrak r/\mathfrak r^2\cong\Phi_{\mathrm{ab}}$.
The lemma therefore makes $\tau_2$ an isomorphism in this case,
without requiring the words $r_y$ to be a free group basis of $\Phi$.
\subsection{Simplicial presentation as a model of $\Omega K(X|R)$}\label{r1.1}
By definition, a simplicial resolution of a group $ G $ is a free simplicial group $F_{\bullet}$ (i.e. $F_n$ is a free group) such that $\pi_0F_{\bullet}\cong G$ and $\pi_nF_{\bullet}=0$ for $n>0$, this concept is analogous to the Eilenberg - MacLane space $ K (G, 1) $ in the category of simplicial groups. By \textbf{homotopy groups of a simplicial group} $F_{\bullet}$ we understand the homology groups of its \textbf{Moore complex} $(NF_n=\cap^{n-1}_{i=0}Ker\;d_i^n, d_n^n|_{NF_n})$.

The step-by-step construction of a free simplicial resolution starts
with generators in degree zero and adds generators in degree one for
the chosen relators; see \cite[Section~1.2]{MP}. Denote this initial
stage by $P_\bullet=F^{(1)}_\bullet$. It is a full simplicial group,
with no new nondegenerate free generators above degree one. Only
its restriction to degrees $0,1,2$ and its augmentation are displayed:
\begin{equation}\label{2}
\xymatrix@C=3.8em{
 P_2 \ar@<2ex>[r]^{d_0} \ar[r]|{d_1}
     \ar@<-2ex>[r]_{d_2} &
 P_1 \ar@<4ex>[l]^{s_0} \ar@<-4ex>[l]_{s_1}
     \ar@<1.5ex>[r]^{d_0} \ar@<-1.5ex>[r]_{d_1} &
 P_0 \ar@<3.5ex>[l]^{s_0} \ar[r] & G.}
\end{equation}
Here
\[
\begin{aligned}
 P_0&=\Phi(X),\qquad P_1=\Phi(s_0X\sqcup Y),\\
 P_2&=\Phi(s_1s_0X\sqcup s_0Y\sqcup s_1Y),
\end{aligned}
\]
where $s_0s_0=s_1s_0$, and the degree-one faces satisfy
\[
 d_0(s_0x)=d_1(s_0x)=x,\qquad d_0(y)=1,\qquad d_1(y)=r_y.
\]
Higher faces on the degenerate generators are determined by the
simplicial identities. In particular,
$\operatorname{im}(d_1:\ker d_0\to P_0)=R$ and $\pi_0P_\bullet=G$.
The diagram $P_1\rightrightarrows P_0$ with its degeneracy is called
a \textbf{simplicial presentation of $G$}.

The Moore complex of $P_\bullet$ has the form
\[
 \cdots\longrightarrow N_2P\xrightarrow{d_2}N_1P
 \xrightarrow{d_1}N_0P,
\]
where $N_0P=P_0$ and
$N_nP=\bigcap_{i=0}^{n-1}\ker(d_i:P_n\to P_{n-1})$ for $n\geq1$.
Thus
\[
 \pi_1P_\bullet
 =\frac{\ker(d_1:N_1P\to N_0P)}{d_2(N_2P)}.
\]
Generation by degeneracies in degree two does not imply $N_2P=1$.
In general $P_\bullet$ is not yet a resolution of $G$: its positive
homotopy groups have not been killed.

For the standard presentation complex $K=K(X\mid Y)$, choose Kan's
cellular free simplicial model $B_K$ with degree-zero generators $X$
and degree-one generators $Y$, with the attaching words specified
above \cite[Definition~5.2 and Theorem~6.2]{Kan}. There are no higher
nondegenerate generators because $K$ is two-dimensional. With these
choices $P_\bullet$ is this cellular model, and \eqref{2} displays its
restriction to degrees $0,1,2$. Kan's comparison
\cite[Theorem~5.5]{Kan} gives the loop homotopy equivalence with
$GS_1K$, where $S_1K$ is the reduced first Eilenberg subcomplex of the
singular complex $SK$ \cite[Definition~8.3 and Theorem~8.4]{May}.
In particular, as explained below,
\[
 |P_\bullet|\simeq\Omega K,\qquad
 \pi_1P_\bullet\cong\pi_2K.
\]
We retain this model of the presentation; we do not add further
nondegenerate generators to turn it into a resolution of $G$.
No nilpotency assumption on $G$ or $K$ is made here.

Recall that the ``Kan loop group functor'' $G$ assigns to each reduced simplicial set $X$ a free simplicial group $GX$ subject to the ``principal twisted cartesian product'' \cite[Theorem 26.6]{May}
$GX\rightarrow GX \times_{\tau} X \xrightarrow{p} X,$
where $|GX \times_{\tau} X|$ is a contractible space, $p$ is a Kan fibration and therefore $GX$ is a simplicial ``loop space'' for $X$.
By \cite[Theorem 10.10]{GJ}, the geometric realization $|p|$ of the Kan fibration $p$ is a Serre fibration. Then by \cite[Theorem 10.9]{GJ} $|GX|$ is the fiber of 
$|GX|\rightarrow |GX \times_{\tau} X| \xrightarrow{|p|} |X|$
and, therefore, by \cite[Proposition 4.66]{Hut} there is a weak homotopy equivalence $\beta:|GS_1K(X|R)|\simeq \Omega K(X|R)$.
Since $K(X|R)$ is a $CW$-complex, by \cite[Corollary 2]{Milnor} $\Omega K(X|R)$ is homotopy equivalent to a $CW$-complex.
And, as geometric realization is a CW-complex \cite[Theorem 14.1]{May}, by Whitehead's theorem, $\beta$ is a homotopy equivalence. Finally, we have a homotopy equivalence $|B_{K(X|R)}|\simeq \Omega K(X|R)$ and therefore $B_{K(X|R)}$ is a combinatorial model of based loops on $K(X|R)$.

A pro-$ p $-group is a group isomorphic to the inverse limit of finite $ p $-groups. This is a topological group (with inverse limit topology) which is compact and totally disconnected.
For such groups one has a presentation theory similar in many aspects to the combinatorial theory of discrete groups.

We next recall a classical arithmetic square under a nilpotency
hypothesis. This hypothesis applies only to the auxiliary statement
below; later it will be used at the nilpotent stages of the tower.
The general square in Proposition~\ref{general-free-arithmetic} does
not assume that the original space or simplicial group is nilpotent.
For a connected nilpotent space $X$ of finite type, the arithmetic square
of Bousfield and Kan \cite[Chapter~VI, Lemma~8.1]{BK} is a homotopy
pullback.  Write $\rho_0^T:T\to\mathbb Q_\infty T$ for the natural
rational-completion map and
$\kappa=\prod_{p\in\pi}\kappa_p$, where $\pi$ is the set of all primes.
Then the square is
\begin{equation}\label{a2}
\xymatrix{
 X\ar[rrr]^{\kappa}\ar[d]_{\rho_0^X} &&&
 \prod_{p\in\pi}(\mathbb F_p)_\infty X\ar[d]^{\rho_0^{\prod_p(\mathbb F_p)_\infty X}}\\
 \mathbb Q_\infty X\ar[rrr]^{\mathbb Q_\infty(\kappa)} &&&
 \mathbb Q_\infty\bigl(\prod_{p\in\pi}(\mathbb F_p)_\infty X\bigr).}
\end{equation}
Here the vertical arrows are maps, whereas $\mathbb Q_\infty$ denotes a
functor; the bottom arrow is its value on $\kappa$.  On nilpotent spaces,
rational completion agrees up to homotopy with rationalization
\cite[Chapter~V, Section~4.3]{BK}.  The corresponding group constructions
are rational prounipotent completion and pro-$p$ completion.  For the
interpretation of pro-$p$ groups as prounipotent group schemes over
$\mathbb F_p$, see \cite[Section~4.1]{Mikh2023}.
\begin{definition}\label{d4}\cite[2]{Mikh2023}
Let us fix a group $G$ and a field $k$ of zero characteristics, define the prounipotent completion of $G$ as the following universal diagram, in which $\rho$ is a
Zariski dense homomorphism from $G$ to the group of $k$-points
of a prounipotent affine group scheme $G_k^{\wedge}$:
$$\xymatrix @R=0.5cm{
                &         G^{\wedge}_k(k)  \ar[dd]^{\tau}     \\
 G \ar[ur]^{\rho} \ar[dr]_{\chi}                 \\
                &         H(k)              }$$
We require that for each Zariski dense homomorphism
$\chi$ there exists a unique homomorphism $\tau$ of prounipotent groups making the diagram commutative.
When $k=\mathbb{F}_p$, we define the prounipotent completion of $G$ as the pro-$p$-completion, which is a pro-$p$-group $G^{\wedge}_p$ obeys the same universal diagram, where $H(k)$ is a finite $p$-group ($G^{\wedge}_p\cong\varprojlim_{|G/U_{\lambda}|=p^{k_{\lambda}}} G/U_{\lambda}$).
\end{definition}

Bousfield and Kan show \cite[Proposition 4.1]{BK} that for each simplicial reduced space $X$ of finite type one can construct $R_{\infty}$-completion up to homotopy ($R=\mathbb{Q},\mathbb{F}_p$) in three steps:
\begin{description}
\item[i]{Replace $X$ with the so-called ``Kan's loop group'' $GX$ \cite[p.276]{GJ} - a simplicial group that has the homotopy type of ``loops on $X$'';}
\item[ii]{Apply the $R$-prounipotent completions functor dimension-wise, that is, we obtain a simplicial prounipotent group $(GX)^{\wedge}_R$;}
\item[iii]{Take the classifying space $\overline{W}(GX)^{\wedge}_R$ of $(GX)^{\wedge}_R$.}
\end{description}

The Kan loop-group functor $G$ and the classifying-space functor
$\overline W$ form an adjoint pair inducing the equivalence of the
corresponding homotopy categories \cite[Chapter~V, Proposition~6.3]{GJ}.
Let $P_\bullet$ be the free simplicial presentation associated with
$K(X\mid Y)$.  The relation between presentation homotopy groups and
simplicial-group homotopy groups is
\[
 \pi_n K(X\mid Y)\cong\pi_n(\overline WP_\bullet)
 \cong\pi_{n-1}(P_\bullet)\qquad(n\geq2).
\]
In particular, the second homotopy group of the presentation is
$\pi_2K(X\mid Y)\cong\pi_1(P_\bullet)$.  The index shift concerns
$\overline WP_\bullet$, not the realization $|P_\bullet|$, whose homotopy
groups agree with those of $P_\bullet$ without a shift.  The same
convention defines the second homotopy group of a prounipotent
presentation \cite[Section~4.5]{Mikh2023}.

\subsection{Outline of the argument}\label{intro:strategy}
For a finite presentation, write
$\widehat P_{R,\bullet}=(P_\bullet)^{\wedge}_R$ for the degreewise
rational or pro-$p$ completion of its simplicial model.  Then
\[
 R_\infty K(X\mid Y)\simeq\overline W((P_\bullet)^{\wedge}_R),
 \qquad R=\mathbb Q,\mathbb F_p.
\]
This model allows the three constructions described at the beginning
of the introduction to be carried out on the same simplicial object.
We outline their roles and the order in which they enter the argument.

For a finite subcontractible presentation, the ambient contractible
complex supplies a primitive independent family of abelianized
relators (Lemma~\ref{lem:relator-independence}).  After rational or
pro-$p$ completion, these relators extend to a free topological basis
$Y_1\sqcup Z$, and the quotient group is $F_R(Z)$, the free
prounipotent group read on rational points or the free pro-$p$ group,
respectively.  The canonical
relator-basis condition of Definition~\ref{d7} makes the cohomological
asphericity criterion applicable to this presentation, including the
non-minimal case.  The Brown--Loday identity identifies the resulting
low-dimensional obstruction with $\pi_1\widehat P_{R,\bullet}$.
To compute the higher homotopy groups we use the filtration, whose
initial input can be read directly from the chosen words.

Put $A_\bullet=\operatorname{Ab}_R(\widehat P_{R,\bullet})$.
Its normalized complex is
\[
 0\longrightarrow R^{Y_1}
 \xrightarrow{\,e_y\mapsto\operatorname{ab}_R(r_y)\,}
 R^{X_1}\longrightarrow0,
\]
in degrees one and zero.  The differential is a split injection, so
$A_\bullet$ is simplicially homotopy equivalent to the constant
module $R^Z$.  The closed lower central series in characteristic zero
and the Zassenhaus filtration in characteristic $p$ have graded
pieces $L_s^R(A_\bullet)$, where $L_s^R$ is the ordinary or restricted
free Lie power.  Thus the associated Adams--Quillen spectral sequences
have
\[
 E^1_{n,s}=\pi_nL_s^R(A_\bullet)=0
 \qquad(n>0,\ s\geq1).
\]
The mechanism is that a dimensionwise Lie functor preserves the
simplicial homotopy to the constant module.  Theorem~\ref{main}
then verifies convergence on the successive quotient fibrations:
the graded vanishing gives vanishing at every finite filtration
stage, and Milnor's exact sequence carries it to the complete
simplicial group.  This separates the computation of the spectral
sequence from the justification of its limiting conclusion.

Comparison of resolutions gives a further structural description of
this computation.  Once the positive homotopy groups have been shown
to vanish, the augmentation
$\widehat P_{R,\bullet}\to F_R(Z)$ is a free simplicial resolution.
Keune's comparison theorem lifts maps between the augmented groups
to maps of resolutions, uniquely up to simplicial homotopy.  In the
prounipotent category, Proposition~\ref{prop:keune-finite-type} and
Corollary~\ref{cor:prounipotent-model-comparison} therefore realize
\[
 \widehat P_{\mathbb Q,\bullet}\simeq
 \operatorname{Const}(F_{\mathbb Q}(Z))
\]
by morphisms and homotopies in that category.  The inductive step
lifts compatible full boundaries while preserving the values
already prescribed on degeneracies; homotopy uniqueness follows
from a relative extension across a simplicial cylinder.  The
continuous lifting argument, also for infinite convergent bases and
in the pro-$p$ category, is developed in
\cite[Proposition~2.3 and Theorem~4.2]{MikhContinuous}.
Because the comparison maps and homotopies are morphisms in the
completed category, degreewise abelianization and filtration
quotients can be applied to them.  This explains how the comparison
of resolutions relates the models used in the spectral calculation.
In this application the resolving property is established before
the comparison with the constant resolution is invoked.

The arithmetic square assembles the completions through their
common nilpotent quotients.  For any free discrete simplicial group
$P_\bullet$ of finite type, put
$N_{s,\bullet}=P_\bullet/\gamma_sP_\bullet$.  In each degree these
are finitely generated free nilpotent groups.  Their arithmetic
squares have an adelic entry given by a restricted product of
$p$-adic unipotent groups relative to their pro-$p$ subgroups
(Proposition~\ref{asnfn}).  Rational points map diagonally into this
entry.  Arithmetic factorization makes the corresponding square
of classifying spaces a connected homotopy pullback.  Taking the
homotopy inverse limit of these squares gives
\[
 \xymatrix{
 \widehat P_{\mathbb Z,\bullet}\ar[r]\ar[d] &
 \prod_p\widehat P_{p,\bullet}\ar[d]\\
 \widehat P_{\mathbb Q,\bullet}\ar[r] & \mathcal A(P)_\bullet,}
\]
where $\widehat P_{\mathbb Z,\bullet}=\varprojlim_sN_{s,\bullet}$
and the other completed groups are formed degreewise.  The
lower-right entry is the explicit simplicial group
\[
 \mathcal A(P)_\bullet=
 \varprojlim_s\varinjlim_{S\subset\pi,\ S\text{ finite}}
 \left(
 \prod_{p\in S}((N_{s,\bullet})_p^{\wedge})_0
 \times\prod_{p\notin S}(N_{s,\bullet})_p^{\wedge}
 \right).
\]
The finite exceptional set of primes may depend on the nilpotent
stage $s$.  This feature accounts for the order of the two limits
and is examined in Section~\ref{adelic-explicit}.  The square is a
homotopy pullback by Proposition~\ref{general-free-arithmetic};
its construction uses nilpotency at the quotient stages and imposes
no nilpotency condition on $\pi_0P_\bullet$.  Applying the derived
classifying-space functor identifies its upper-left entry with the
integral Bousfield--Kan completion of $\overline WP_\bullet$.

For a subcontractible presentation the same complementary words
can be chosen at every nilpotent stage.  They give compatible
contractions
\[
 P_\bullet/\gamma_sP_\bullet\simeq
 \operatorname{Const}\bigl(\Phi(Z)/\gamma_s\Phi(Z)\bigr).
\]
Consequently all four entries of the arithmetic square can be
identified by taking limits of explicit groups.  Theorem~\ref{c4}
gives, in particular,
\[
 R_\infty K(X_1\mid Y_1)\simeq K(F_R(Z),1),
 \qquad R=\mathbb Q,\mathbb F_p,\mathbb Z,
\]
with $F_{\mathbb Z}(Z)=\varprojlim_s\Phi(Z)/\gamma_s\Phi(Z)$.
Here the vanishing of the higher homotopy groups and the
surjectivity of the group towers make passage to the limit
explicit.  Subcontractible presentations thus exhibit both the
spectral and resolution-theoretic computation of the field
completions and their assembly in a noncommutative arithmetic square.

Section~\ref{s1.0} develops the nilpotent arithmetic square and the
homotopy-limit constructions used to assemble its stages.
Section~\ref{s5} establishes independence of the relators, proves
the field-completion theorem by the filtration argument, and gives
the comparison of free simplicial prounipotent resolutions.
Section~\ref{s6} constructs the general noncommutative arithmetic
square, describes its adelic entry, and carries out the integral
completion calculation for subcontractible presentations.

\section{Arithmetic squares for nilpotent spaces and their homotopy (co)limits}\label{s1.0}
\setcounter{theorem}{0}
We start with the "arithmetic square" of abelian groups \cite[Proposition 1.18]{Su1}
\begin{proposition}\label{su}
There is the following diagram of abelian groups
$$\xymatrix{\mathbb{Z}  \ar[rr]^{\prod_{p\in\pi} \kappa_p}\ar[d]_{\otimes \mathbb{Q}} && \prod_{p\in\pi}  \mathbb{Z}_p\ar[d]^{\otimes\mathbb{Q}} \\
\mathbb{Q}\ar[rr]^{\prod_{p\in\pi} \kappa_p} &&\widetilde{\prod}_{p\in\pi} \mathbb{Q}_p,}$$
where $\pi$ is the set of prime numbers, $\kappa_p$ is the $p$-adic
completion map, and $\widetilde{\prod}_{p\in\pi}\mathbb Q_p$ is the
restricted product: all but finitely many components belong to
$\mathbb Z_p$.
\end{proposition}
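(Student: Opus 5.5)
The statement as written only asserts that the diagram exists, but the content one wants (and cites from Sullivan) is that it commutes and is a pullback of abelian groups. Moreover, the sequence
\[
0\to\mathbb Z\xrightarrow{(\otimes\mathbb Q,\ \prod_p\kappa_p)}\mathbb Q\oplus\prod_p\mathbb Z_p\xrightarrow{(a,b)\mapsto a-b}\widetilde{\prod}_p\mathbb Q_p\to0
\]
is short exact, so the square is also a homotopy pullback when viewed in chain complexes or as a square of discrete abelian groups. I would prove it in four steps.

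First, check well-definedness and commutativity. The diagonal map $\mathbb Q\to\prod_p\mathbb Q_p$ lands in the restricted product, because a rational $a/b$ has only finitely many primes in its denominator; for $p\nmid b$ it lies in $\mathbb Z_p$. The right vertical map sends $(z_p)\in\prod_p\mathbb Z_p$ to itself viewed in $\widetilde{\prod}_p\mathbb Q_p$. This identifies $(\prod_p\mathbb Z_p)\otimes\mathbb Q$ with the subgroup of bounded-denominator adeles, which sits inside the restricted product. Commutativity is then immediate, since both composites send $n$ to the diagonal element $(n)_p$.

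Second, show the square is a pullback. Suppose $q\in\mathbb Q$ and $(z_p)\in\prod_p\mathbb Z_p$ have the same image. Then $q\in\mathbb Z_p$ for every $p$, so $q\in\bigcap_p(\mathbb Z_{(p)})=\mathbb Z$, and $z_p=q$ for all $p$. Hence the comparison map from $\mathbb Z$ is bijective; injectivity is clear from the $\mathbb Q$ component.

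Third, establish surjectivity of $(a,b)\mapsto a-b$, which is the main point. Take $(x_p)\in\widetilde{\prod}_p\mathbb Q_p$, and let $S$ be the finite set of primes with $x_p\notin\mathbb Z_p$. For $p\in S$, write the principal part of $x_p$ as a rational number $c_p$ with $p$-power denominator such that $x_p-c_p\in\mathbb Z_p$. Put $q=\sum_{p\in S}c_p\in\mathbb Q$. For any prime $\ell$, each $c_p$ with $p\neq\ell$ is $\ell$-integral, so $x_\ell-q\in\mathbb Z_\ell$ for all $\ell$. Therefore $x=q-b$ with $b=(q-x_\ell)_\ell\in\prod_\ell\mathbb Z_\ell$. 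This is the strong approximation (partial fractions) step, and I expect it to be the only step needing care, chiefly in matching the sign conventions of the difference map.

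Finally, short exactness of the Mayer–Vietoris sequence yields that the square is a homotopy pullback of discrete abelian groups. This is the form needed later, when tensoring with nilpotent groups, as in Proposition~\ref{asnfn}.
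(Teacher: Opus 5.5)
Your proof is correct and takes the same route as the paper: both reduce the statement to exactness of $0\to\mathbb Z\to\mathbb Q\oplus\prod_p\mathbb Z_p\to\widetilde{\prod}_p\mathbb Q_p\to 0$. Both use rational principal parts $m/p^n$ for surjectivity, and $\bigcap_p\mathbb Z_{(p)}=\mathbb Z$ for exactness on the left and in the middle. Your explicit sum $q=\sum_{p\in S}c_p$ over the finitely many non-integral coordinates, with each $c_p$ being $\ell$-integral for $\ell\neq p$, supplies the assembly step that the paper's one-coordinate argument leaves implicit.
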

\begin{proof}
It is necessary to check the exactness of the exact sequence as follows
$$0\rightarrow \mathbb{Z}\rightarrow \prod_{p\in\pi}  \mathbb{Z}_p\oplus\mathbb{Q}  \xrightarrow{\otimes \mathbb{Q}-\prod_{p\in\pi} \kappa_p} \widetilde{\prod}_{p\in\pi} \mathbb{Q}_p\rightarrow 0.$$
First we note that for any $p$-adic number $r_p\in\mathbb{Q}_p$ there is a minimal $n\in\mathbb{N}$ such that $r_p=1/p^n\cdot \xi_p,$ where $\xi\in\mathbb{Z}_p$. Since $\mathbb{Z}_p$ is the $p$-adic completion of $\mathbb{Z}$ it follows that elements of $\mathbb{Z}_p$ can be performed as series of the form $\sum_{i=0}^{\infty} a_i\cdot p^i$, where $a_i$ are non-negative integers not exceeding $p-1$. It follows that for each $p$-adic number $r_p$ there is a rational number $m/p^n$, such that $r_p-m/p^n$ is a $p$-adic integer and therefore the sequence is right exact. It is also left exact since a rational number is a $p$-adic integer if and only if it is an integer.
\end{proof}
The $p$-adic completion and the rationalization can be extended to the category of groups.

Let $G$ be a group and let $p$ be a prime.  If $I$ is the augmentation
ideal of $\mathbb F_p[G]$, the Zassenhaus filtration is
\[
 \mathcal M_s(G)=\{g\in G\mid g-1\in I^s\}.
\]
Equivalently, the dimension-subgroup formula gives
\[
 \mathcal M_s(G)=\prod_{ip^j\geq s}\gamma_i(G)^{p^j},
 \qquad i\geq1,\quad j\geq0;
\]
see \cite{Rec,Cur} and \cite[Introduction and Section~8]{ChapmanEfrat2016}.
For pro-$p$ groups we use the completed group algebra, closed powers
of its augmentation ideal, and the closure of the displayed product.
Thus the augmentation-ideal definition here and the Zassenhaus
filtration used in the field-completion proof describe the same
filtration.  To avoid a terminological ambiguity, we distinguish it
from the lower $p$-central series
\[
 P_1G=G,\qquad P_{s+1}G=(P_sG)^p[P_sG,G],\qquad
 P_sG=\prod_{i=1}^s\gamma_i(G)^{p^{s-i}}.
\]
The latter product formula is the one discussed in
\cite[Introduction and Corollary~7.5]{ChapmanEfrat2016}, with reference
to \cite[Proposition~3.8.6]{NSW}.  These filtrations need not agree
term by term: for $G=\mathbb Z$ and $p=2$,
$\mathcal M_4(G)=4\mathbb Z$, whereas $P_4G=8\mathbb Z$.

Now we can define the $p$-adic completion of a group $G$ by the rule as follows $$G^{\wedge}_p = \varprojlim_s G/\mathcal{M}_s.$$
in the special case, when $G$ is finitely generated, then  $G^{\wedge}_p$ is a pro-$p$-completion of $G$ and thus, if $G$ is abelian, then $G^{\wedge}_p\cong G\otimes \mathbb{Z}_p$.

For a nilpotent group $G$, its rationalization $G_0$ is its Malcev
completion: the universal map to a uniquely divisible nilpotent group.
For an arbitrary group we instead distinguish the rational pronilpotent
completion
\[
 \widehat G_{\mathbb Q}^{\mathrm{nil}}
 =\varprojlim_s(G/\gamma_sG)_0.
\]
Let $J$ be the augmentation ideal of $\mathbb Q[G]$.  Quillen's
complete Hopf-algebra description identifies this completion with
the group-like elements of
\[
 \widehat{\mathbb Q[G]}=\varprojlim_m\mathbb Q[G]/J^m;
\]
see \cite[Appendix~A, Section~3]{Qui5}.  The Hopf structure is understood
on the completion, with completed tensor products.  The individual
algebra quotients by $J^m$ are not being asserted to be Hopf quotients.
The augmentation filtration and the nilpotent description give
compatible models of the same group completion.  Indeed, every map
to a rational nilpotent group of class at most $s$ kills
$\gamma_{s+1}G$ and factors uniquely through $(G/\gamma_{s+1}G)_0$.
Consequently these rationalized nilpotent quotients suffice to compute
the universal inverse limit, and all the identifications are natural
in $G$.  In the free finitely generated case used below, the Magnus
expansion identifies $\widehat{\mathbb Q[G]}$ with noncommutative power
series; its primitive elements form the completed free Lie algebra,
and truncation by bracket length gives the same nilpotent tower.
This group completion should not be confused with localization in
the category of all groups.

We recall the localization terminology only in the category of nilpotent
groups, where it is needed for the arithmetic square
\cite[Definition~1.1]{HMR}.
\begin{definition}\label{hmr}
Let $P$ be a set of primes.  A nilpotent group $G$ is \emph{$P$-local}
if $g\mapsto g^q$ is bijective for every prime $q\notin P$.
A \emph{$P$-localization} of $G$ is a homomorphism
$\kappa_P:G\to G_P$ to a $P$-local nilpotent group such that
precomposition induces a bijection
\[
 \operatorname{Hom}(G_P,H)\xrightarrow{\ \sim\ }
 \operatorname{Hom}(G,H)
\]
for every $P$-local nilpotent group $H$.
\end{definition}
Localization and completion are different operations.  Already for the
additive group of integers,
\[
 \mathbb Z_{\{p\}}=\mathbb Z_{(p)}
   =\{a/b\in\mathbb Q:p\nmid b\},
 \qquad \mathbb Z_p^{\wedge}=\mathbb Z_p.
\]
The former inverts primes other than $p$; the latter is the inverse
limit of the quotients $\mathbb Z/p^m\mathbb Z$.  The first group is
countable and the second uncountable.  Rationalization is the special
case $P=\emptyset$, denoted $G_0$, and coincides with Malcev completion
for nilpotent groups.  At the space level, Bousfield--Kan rational
completion agrees with rationalization for nilpotent spaces, but this
identification cannot be assumed for arbitrary spaces
\cite[Sections~7--8]{IvanovRat2021}.  We therefore retain separate
notation for localization, pro-$p$ completion, group pronilpotent
completion, and Bousfield--Kan completion.

It is proved in \cite[Lemma 8.2, Ch. VI]{BK}, \cite[Theorem I.3.7]{HMR} (here the proof is very similar to that of Proposition \ref{su})that any finitely generated nilpotent group is a pullback of the "arithmetic square" as follows
\begin{theorem}\cite[Lemma 8.2, Ch. VI]{BK} \label{asn}
If $G$ is a finitely generated nilpotent group and $P$ is a set of primes, then $G_P$ is a pullback of the arithmetic square below.  In this diagram $G_p$ denotes the pro-$p$ completion $G_p^{\wedge}$, whereas $G_P$ denotes localization at $P$.
$$\xymatrix{G_P  \ar[rrr]^{\prod_{p\in P} \kappa_p}\ar[d]_{\kappa_0} &&& \prod_{p\in P}  G_p\ar[d]^{\kappa_0} \\
 G_0 \ar[rrr]^{\kappa_0(\prod_{p\in P} \kappa_p)} &&&(\prod_{p\in P} G_p)^{\wedge}_0,}$$
Moreover, every element $u\in ( \prod_{p\in P} G_p)_0$ can be expressed as $u=vw$, where $v$ (resp. $w$) is in the image of 
$G_0$ (resp. $\prod_{p\in P} G_p$). If we assume the set $P=\pi$, then we obtain a pull-back presentation of a finitely generated nilpotent group in the "arithmetic square".
\end{theorem}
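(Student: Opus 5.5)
We argue by induction on the nilpotency class $c$ of $G$, along a central series, in the same way as Proposition~\ref{su}. In the abelian case ($c=1$) the group $G$ is finitely generated abelian, so $G\cong\mathbb Z^r\oplus T$ with $T$ finite. The functors $G\mapsto G_P=G\otimes\mathbb Z_{(P)}$, $G_p=G\otimes\mathbb Z_p$ and $G_0=G\otimes\mathbb Q$ are additive. It therefore suffices to treat $\mathbb Z$ and $\mathbb Z/q^m$.

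For $\mathbb Z/q^m$, all rational entries vanish. Both $G_P$ and $\prod_{p\in P}G_p$ equal $\mathbb Z/q^m$ if $q\in P$, and both vanish otherwise, so the square is a pullback. For $\mathbb Z$, the square becomes
\[
 \mathbb Z_{(P)}\;\to\;\prod_{p\in P}\mathbb Z_p,\qquad \mathbb Q\;\to\;\Bigl(\prod_{p\in P}\mathbb Z_p\Bigr)\otimes\mathbb Q .
\]
The pullback and factorization claims reduce to two facts, both obtained by the digit-expansion argument of Proposition~\ref{su}:
\begin{itemize}
\item a rational number lying in $\mathbb Z_p$ for every $p\in P$ lies in $\mathbb Z_{(P)}$;
\item every element of $(\prod_{p\in P}\mathbb Z_p)\otimes\mathbb Q=\frac1n\prod_{p\in P}\mathbb Z_p$ is a sum of a rational number and an element of $\prod_{p\in P}\mathbb Z_p$.
\end{itemize}
For the second fact, write $u=\xi/n$ with $\xi\in\prod_{p\in P}\mathbb Z_p$. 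By the Chinese remainder theorem, choose an integer $a\equiv\xi$ modulo $n$ at the finitely many primes $p\in P$ dividing $n$. Then $u-a/n\in\prod_{p\in P}\mathbb Z_p$: at primes $p\mid n$ the difference is integral by the choice of $a$, and at the remaining primes $1/n$ is already a unit.

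For the inductive step, take the central extension $1\to A\to G\to Q\to1$ with $A=\gamma_cG$ finitely generated abelian and $Q$ of class $c-1$. The three operations $(-)_P$, $(-)_p$ and $(-)_0$ are exact on finitely generated nilpotent groups and preserve central extensions (\cite{HMR}). Moreover, $\prod_{p\in P}$ of exact sequences is exact. So the square for $G$ maps to the square for $Q$, with the square for $A$ as fibre.

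We then check that $G_P\to G_0\times_{(\prod G_p)_0}\prod G_p$ is bijective by diagram chase.
\begin{itemize}
\item \emph{Injectivity.} If $g$ maps trivially in both $G_0$ and $\prod_pG_p$, its image in $Q_P$ is trivial by induction. Hence $g\in A_P$, and the abelian case gives $g=1$.
\item \emph{Surjectivity.} Given a compatible pair $(v,w)$, first lift its image in $Q$ to some $q\in G_P$. Multiplying by $q^{-1}$ moves the pair into the fibre over the identity. The factorization property for $Q$ is needed at this point to adjust $q$ so that the resulting pair lies in $A_0\times_{(\prod A_p)_0}\prod A_p$. The abelian case then finishes the argument.
\end{itemize}
The factorization $u=vw$ for $G$ is proved the same way. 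Factor the image of $u$ in $(\prod Q_p)_0$ and lift both factors. The discrepancy is central, lies in $(\prod A_p)_0$, and factors by the abelian case; centrality lets us absorb it into the two factors.

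The main obstacle is identifying $(\prod_{p\in P}G_p)_0$ correctly when $P$ is infinite. An infinite product of $p$-adic groups is not finitely generated, so exactness of rationalization on it, and the identity $(\prod A_p)_0=(\prod A_p)\otimes\mathbb Q$ as a subgroup of the restricted product $\widetilde\prod\mathbb Q_p$, need care. I would handle this with a uniform bound: the nilpotency class and Hirsch rank of all the $G_p$ are bounded independently of $p$. This lets us use a fixed Malcev basis and reduce to the torsion-free abelian layers, where rationalization is tensoring with $\mathbb Q$ and is exact. Torsion of $G$ is supported at finitely many primes, so it contributes only finitely many factors and is handled separately.
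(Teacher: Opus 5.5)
Your argument is correct. There is no proof in the paper to compare it with in detail: Theorem~\ref{asn} is quoted from Bousfield--Kan and Hilton--Mislin--Roitberg, with only the remark that the proof resembles that of Proposition~\ref{su}. Your induction on the class along $1\to\gamma_cG\to G\to G/\gamma_cG\to1$ is the standard argument behind those references. Your abelian base case uses the Chinese remainder theorem, which is a one-denominator version of the prime-by-prime correction in Proposition~\ref{su}.

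Two points in your write-up can be sharpened.

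First, the surjectivity step does not need the factorization property of $Q$. After multiplying by a lift $\tilde q\in G_P$, the pair $(v',w')$ lies in $A_0\times\prod_{p\in P}A_p$, and its two images in $(\prod_{p\in P}G_p)_0$ agree. Since $(\prod_{p\in P}A_p)_0\to(\prod_{p\in P}G_p)_0$ is injective, the images already agree in $(\prod_{p\in P}A_p)_0$, so the abelian case applies directly. Factorization for $Q$ is needed only to prove factorization for $G$, which you do correctly by lifting and absorbing the central discrepancy.

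Second, what you call the main obstacle is not a real one. $P$-localization, and in particular rationalization, is exact and carries central extensions to central extensions on the category of \emph{all} nilpotent groups, not only finitely generated ones (Hilton--Mislin--Roitberg, Chapter~I). The group $\prod_{p\in P}G_p$ is nilpotent of class at most $c$. The sequence $1\to\prod_pA_p\to\prod_pG_p\to\prod_pQ_p\to1$ is a central extension, by exactness of pro-$p$ completion on finitely generated nilpotent groups and exactness of products. Hence its rationalization is again a central extension, and on the abelian kernel rationalization is simply $-\otimes\mathbb Q$.

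So the uniform Malcev-basis and rank bounds you propose are unnecessary. As written they are also too vague to count as a proof; for instance, a pro-$p$ group has no Hirsch rank. It is better to replace that paragraph with the general exactness statement.
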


By a $\lambda$-nilpotent group we mean a group $G$ with
$\gamma_{\lambda+1}G=1$, where $\gamma_1G=G$ and
$\gamma_{j+1}G=[G,\gamma_jG]$.  We now make the lower-right corner of
the arithmetic square explicit.  Finite generation, rather than merely
finite rank, is assumed throughout this statement.
\begin{proposition}\label{asnfn}
Let $G$ be a finitely generated nilpotent group.  Put $N_p=G_p^{\wedge}$,
$N=\prod_pN_p$, and let $U$ be the unipotent algebraic group over
$\mathbb Q$ associated with its Malcev completion, so that
$G_0=U(\mathbb Q)$.  Write $U_p=U\otimes_{\mathbb Q}\mathbb Q_p$ and
$j_p:N_p\to U_p(\mathbb Q_p)$ for the natural map.  Then:
\begin{description}
\item[(1)] The rationalization of the underlying nilpotent group $N_p$
is naturally $(N_p)_0\cong U_p(\mathbb Q_p)$.
\item[(2)] For finite sets $S$ of primes, put
\[
 A_S=\prod_{p\in S}U_p(\mathbb Q_p)\times\prod_{p\notin S}N_p,
 \qquad A=\varinjlim_{S}A_S.
\]
For $S\subseteq T$, the transition map applies $j_p$ in the newly added
coordinates $p\in T\setminus S$ and is the identity elsewhere.  There
is a natural isomorphism $N_0\cong A$.  Equivalently, with
$K_p=j_p(N_p)$,
\[
 A\cong\prod_p'\bigl(U_p(\mathbb Q_p),K_p\bigr),
\]
the group of tuples whose coordinates belong to $K_p$ for all but
finitely many primes.
\item[(3)] Let $\kappa:G\to N$ be the product of the completion maps,
and let $\delta=\kappa_0:G_0\to N_0\cong A$ be its rationalization.
The arithmetic square is the pullback
\begin{equation}\label{fn}
\xymatrix{
 G\ar[rr]^{\kappa}\ar[d]_{\kappa_0} &&N\ar[d]^{\kappa_0}\\
 G_0\ar[rr]^{\delta} &&A.}
\end{equation}
In restricted-product coordinates,
$\delta(u)=(u_p)_p$, where $u_p$ is the image of $u\in U(\mathbb Q)$
under scalar extension to $\mathbb Q_p$.  This is a diagonal map over
\emph{all} primes.  By contrast, the map of an individual factor
$N_p\to A$ has only its $p$-coordinate possibly nontrivial and factors
through $j_p$.
\item[(4)] If $G$ is a free nilpotent group on finitely many generators,
all four arrows in \eqref{fn} are injective.  Thus, inside $A$,
\[
 G\cong\delta(G_0)\cap\kappa_0(N).
\]
\end{description}
Here $U_p(\mathbb Q_p)$ is also denoted
$G_{\mathbb Q_p}^{\wedge}(\mathbb Q_p)$; the notation distinguishes the
algebraic group from its group of rational points.
\end{proposition}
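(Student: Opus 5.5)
We argue by induction on the nilpotency class of $G$, using the abelian computation of Proposition~\ref{su} as the base case, and then invoke Theorem~\ref{asn} with $P=\pi$ for the pullback statement. Everything rests on the Malcev correspondence: $G_0=U(\mathbb Q)$, $U$ is unipotent, and multiplication and inversion are polynomial in Malcev coordinates of the second kind. We fix a Malcev basis $g_1,\dots,g_m$ of $G$, adapted to a central series with cyclic factors, after passing to a torsion-free finite-index situation. We treat the finite torsion subgroup separately: it is a product of its $p$-parts, it dies rationally, and it lives in finitely many $N_p$.

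\textbf{Part (1).} Coordinates of the second kind identify $G$ with $\mathbb Z^m$, $N_p$ with $\mathbb Z_p^m$, and $U_p(\mathbb Q_p)$ with $\mathbb Q_p^m$. The group law is given by the same rational polynomials throughout. Then $N_p\to U_p(\mathbb Q_p)$ is injective, modulo the finite $p$-torsion. Every element of $U_p(\mathbb Q_p)$ has a power in $j_p(N_p)$, since denominators are bounded by a power of $p$ on the finitely many coordinates. Finally, $U_p(\mathbb Q_p)$ is uniquely divisible. These three facts give the universal property of rationalization for the nilpotent group $N_p$.

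\textbf{Part (2).} We show that rationalization of $N=\prod_pN_p$ gives $A$. The inclusions $N\to A_S$ are compatible with the transition maps, so they induce a map $N\to A$, and $A$ is uniquely divisible. The point is that uniqueness of $n$-th roots in $A$ only involves the primes dividing $n$. So $A$ is a nilpotent group of bounded class, every element of which has a power in the image of $N$. The kernel of $N\to A$ is the torsion, which is $\prod_p\mathrm{tors}(N_p)$ and trivial for almost all $p$. By the characterization of Malcev completion (torsion kernel, every target element has a power from the source, uniquely divisible target), $N_0\cong A$. For $x=(x_p)\in A$, take $n$ to be the product of the denominators $p^{k_p}$ over the finitely many $p\in S$ where $x_p\notin K_p$. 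Then $x^{n^c}\in\operatorname{im}N$, with $c$ a bound depending on the class. This gives the restricted-product description.

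\textbf{Part (3).} The diagonal formula for $\delta$ follows by functoriality, since $G_0\to(N_p)_0=U_p(\mathbb Q_p)$ is scalar extension. The pullback property is Theorem~\ref{asn} with $P=\pi$, using $G_\pi=G$ and the identification of $(\prod_pN_p)_0$ with $A$ from (2). To make this self-contained, we would redo the exactness argument of Proposition~\ref{su} inductively along a central series. Take a central extension $1\to C\to G\to G/C\to1$. The functors $(-)_0$, $(-)_p^\wedge$ and $A$ are exact on finitely generated nilpotent groups. A five-lemma style diagram chase on fibres, where central $C$ acts by translation, then reduces the pullback statement to the abelian case, which is Proposition~\ref{su} tensored with $\mathbb Z^r$ and with finite groups.

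\textbf{Part (4).} A free nilpotent group of finite rank is torsion-free, so $G\to G_0$, $G\to N_p$ and $N_p\to U_p(\mathbb Q_p)$ are injective. In coordinates these maps are $\mathbb Z^m\subset\mathbb Q^m$ and the like. Injectivity of $N\to A$ and of $\delta$ follows. The intersection formula is a restatement of the pullback square when all maps are injective.

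The main obstacle is in (2) and (3): exactness of the direct-limit construction $A$ in central extensions, and compatibility of the restricted-product topology with the nonabelian group law. We will handle this through the coordinate description. The integrality set $K_p$ is a subgroup, and all structure constants lie in $\mathbb Z_p$ for almost all $p$ (for $p$ not dividing their denominators). This is the one nontrivial check.
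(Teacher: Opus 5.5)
Your proposal is correct and follows essentially the paper's proof: the rationalization criterion (torsion kernel, every element having a power in the image, uniquely divisible nilpotent target) for (1) and (2), Theorem~\ref{asn} with $P=\pi$ for the pullback in (3), and integral Malcev coordinates for injectivity in (4). The differences are cosmetic. You get powers landing in $K_p$ from bounded $p$-adic denominators, where the paper uses $v^{p^m}=\exp(p^m\log v)\to 1$ and the openness of $K_p$, and your central-extension induction is only an optional self-contained substitute for citing Theorem~\ref{asn}. Also, no structure-constant check is needed for the restricted product: each $K_p=j_p(N_p)$ is a subgroup, and the only exceptional primes are those dividing $|\operatorname{Tor}(G)|$, which is exactly what makes the coordinate map from $A$ to the restricted product injective.
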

\begin{proof}
(1) The Malcev group of a finitely generated nilpotent group is
finite-dimensional.  The comparison of unipotent completions identifies
the $p$-adic unipotent completion with $U_p$; see
\cite[Appendix~A, Theorem~A.6 and Lemma~A.7]{HM2003}.
The map $j_p$ has torsion kernel, and its image $K_p$ is a compact open
subgroup of $U_p(\mathbb Q_p)$.  These facts can also be seen by taking
Malcev coordinates on $G/\operatorname{Tor}(G)$: the pro-$p$ completion
replaces integral coordinates by $p$-adic integral coordinates.  The
finite torsion subgroup contributes only a finite $p$-group to the
kernel; compare \cite[Appendix~A, Theorem~A.3 and Lemma~A.5]{HM2003}.
Every $v\in U_p(\mathbb Q_p)$ has a power in $K_p$.  Indeed,
$v^{p^m}=\exp(p^m\log v)$ tends to the identity, hence lies in the open
subgroup $K_p$ for large $m$.  Since $U_p(\mathbb Q_p)$ is uniquely
divisible and nilpotent, the rationalization criterion
\cite[Appendix~A, Corollary~3.8]{Qui5} proves (1).

(2) The nilpotency classes of all $A_S$ are bounded by that of $G$.
For a pro-$p$ group the $n$th-power map is bijective when $(n,p)=1$:
this holds on every finite $p$-group and passes to the inverse limit.
Given $a\in A_S$ and $n\geq1$, enlarge $S$ by the primes dividing $n$.
In the enlarged coordinates take roots in the uniquely divisible groups
$U_p(\mathbb Q_p)$; outside them take the unique roots in $N_p$.
This proves existence of $n$th roots in $A$.  If two elements have the
same $n$th power in the colimit, their equality of powers holds at some
finite stage; enlarging that stage by the primes dividing $n$ proves
uniqueness in exactly the same way.  Thus $A$ is uniquely divisible.

Let $j:N=A_{\emptyset}\to A$ be the canonical map.  For $a\in A_S$,
part (1) supplies a positive integer $n$ such that every coordinate in
$S$ of $a^n$ lifts to $N_p$.  The remaining coordinates already lie in
$N_p$, so $a^n\in j(N)$.  If $b\in\ker j$, its image is trivial at
some finite stage $A_S$.  Hence $b_p=1$ outside $S$, while each $b_p$
inside $S$ belongs to the torsion kernel of $j_p$.  A common multiple
of their orders annihilates $b$.  Conversely, all torsion elements map
to the torsion-free group $A$.  The same rationalization criterion now
gives $N_0\cong A$.

The coordinate maps identify $A$ with the displayed restricted product.
Surjectivity follows by lifting every coordinate in $K_p$ outside a
finite exceptional set.  For injectivity, note that
$\operatorname{Tor}(G)$ is finite, so the kernels of $j_p$ are trivial
outside a fixed finite set of primes.  An element with all coordinates
trivial therefore becomes trivial after adjoining that finite set to
$S$.  This also explains why the transition maps need not be embeddings
when $G$ has torsion.

(3) Every $u\in G_0$ has a positive power $u^n$ in the image of $G$.
For $p\nmid n$, the unique $n$th root of the image of $u^n$ in $N_p$
maps to $u_p$.  Therefore $u_p\in K_p$ except possibly at the finitely
many primes dividing $n$, and $(u_p)_p$ belongs to the restricted
product.  Its map from $G_0$ agrees with $j\kappa$ on $G$, hence equals
$\delta$ by the universal property of rationalization.  The pullback
assertion is Theorem~\ref{asn} with all primes.  Notice that an embedding
into a single coordinate of $A$ would not give this diagonal map.

(4) Write $G=\Phi(X)/\gamma_{\lambda+1}\Phi(X)$ with $X$ finite.
The Hall basic commutators of weights at most $\lambda$ give unique
integral coordinates on $G$.  The same coordinates are in $\mathbb Z_p$
for $N_p$, in $\mathbb Q$ for $G_0$, and in $\mathbb Q_p$ for
$U_p(\mathbb Q_p)$.  The inclusions of these coefficient rings show that
$G\to G_0$, $G\to N_p$, and $j_p$ are injective.  The coordinatewise
map $N\to A$ and the diagonal map $G_0\to A$ are consequently
injective, proving (4).

For clarity, the associated-graded argument behind these coordinates
is as follows.  Witt's theorem gives
\[
 \operatorname{gr}_{\gamma}\Phi(X)\cong L_{\mathbb Z}(X).
\]
In characteristic zero, the Lie algebra of the free prounipotent group
is the \emph{completed} free Lie algebra
\[
 \operatorname{Lie}F_k(X)=\widehat L_k(X)
 =\prod_{m\geq1}L_{k,m}(X).
\]
Its degree filtration has associated graded
$\bigoplus_{m\geq1}L_{k,m}(X)=L_k(X)$.  The exponential and logarithm
identify the group with this completed Lie algebra equipped with the
Baker--Campbell--Hausdorff multiplication, not with its additive group.
On each successive lower-central factor the multiplication becomes additive.
Since each $L_{\mathbb Z,m}(X)$ has a Hall basis, extension of scalars gives
\[
 \operatorname{gr}_{\gamma,m}F_k(X)(k)
 \cong L_{k,m}(X)
 \cong L_{\mathbb Z,m}(X)\otimes_{\mathbb Z}k.
\]
Here lower-central subgroups on the prounipotent side are closed;
see \cite[Chapter~IV, Sections~6--7]{Se} and
\cite[Appendix~A, Section~3]{Qui5}.
For the free pro-$p$ group, the closed ordinary lower-central quotients
are likewise
\[
 \operatorname{gr}_{\gamma,m}F_p(X)
 \cong L_{\mathbb Z,m}(X)\otimes_{\mathbb Z}\mathbb Z_p
\]
\cite[Propositions~3.2.2 and~3.2.5]{ZR}.  This is the ordinary closed
lower central series, not the Zassenhaus filtration.  Truncating these
identifications at weight $\lambda$ yields the free nilpotent case
used above.
\end{proof}

Bousfield and Kan show \cite[Proposition 4.1]{BK} that for each simplicial reduced space $X$ of finite type one can construct $R_{\infty}$-completion up to homotopy (we meet $R=\mathbb{Z}, \mathbb{Q},\mathbb{F}_p$) in three steps:
\begin{description}
\item[i]{Replace $X$ with the so-called ``Kan's loop group'' $GX$ \cite[p.276]{GJ} - a simplicial group that has the homotopy type of ``loops on $X$'';}
\item[ii]{Apply the $R$-prounipotent completions functor dimension-wise, that is, we obtain a simplicial prounipotent group $(GX)^{\wedge}_R$;}
\item[iii]{Take the classifying space $\overline{W}(GX)^{\wedge}_R$ of $(GX)^{\wedge}_R$.}
\end{description}

\begin{theorem}\cite[Chapter~V, Section~3]{BK}\label{q}
If $X$ is pointed, connected and nilpotent, then
$X\to\mathbb Q_\infty X$ is a rational homology equivalence and
\[
 \pi_1(\mathbb Q_\infty X)\cong(\pi_1X)_0,\qquad
 \pi_n(\mathbb Q_\infty X)\cong\pi_n(X)\otimes\mathbb Q
 \quad(n\geq2).
\]
The first formula uses nilpotent group rationalization; it is not a
formula involving the tensor product of a nonabelian group.
\end{theorem}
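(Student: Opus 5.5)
We prove Theorem~\ref{q} by reducing to the simplicial-group model and then climbing the Postnikov tower of $X$. First we replace $X$ by a reduced simplicial set of the same homotopy type. We then use the three-step construction recalled above: $\mathbb Q_\infty X\simeq\overline W((GX)^{\wedge}_{\mathbb Q})$, where the completion is applied degreewise.

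\emph{Rational homology equivalence.} This half holds for every connected $X$ and is the formal part. Bousfield--Kan show that $X\to\mathbb Q_\infty X$ induces an isomorphism on $\mathbb Q$-homology whenever $X$ is $\mathbb Q$-good, and that nilpotent spaces are $\mathbb Q$-good. In simplicial-group terms, degreewise Malcev completion of a free group induces an isomorphism on rational abelianizations. So the map $GX\to(GX)^{\wedge}_{\mathbb Q}$ induces a weak equivalence on $\operatorname{Ab}\otimes\mathbb Q$. Applying $\overline W$ then gives a $\mathbb Q$-homology isomorphism once convergence of the lower-central-series spectral sequence is known. We would cite the $\mathbb Q$-goodness of nilpotent spaces from \cite[Chapter~V]{BK} rather than reprove it.

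\emph{Homotopy groups.} We induct over a principal refinement of the Postnikov tower. Nilpotency of $X$ means each $X_n\to X_{n-1}$ factors into finitely many principal fibrations with fibre $K(A,n)$, where $A$ is abelian and is a subquotient of a central series of $\pi_nX$ (for $n\ge2$) or of $\pi_1X$ (for $n=1$).
\begin{itemize}
\item For the base cases $K(A,n)$ with $A$ abelian, we compute $\mathbb Q_\infty K(A,n)\simeq K(A\otimes\mathbb Q,n)$. Since $H_*(K(A,n);\mathbb Q)\cong H_*(K(A\otimes\mathbb Q,n);\mathbb Q)$, the map $K(A,n)\to K(A\otimes\mathbb Q,n)$ is a $\mathbb Q$-homology equivalence into a $\mathbb Q$-complete target.
\item For the inductive step we invoke the fibre lemma: $\mathbb Q_\infty$ preserves principal fibrations with nilpotent (here abelian) fibre over connected nilpotent bases. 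This yields fibrations $K(A\otimes\mathbb Q,n)\to\mathbb Q_\infty X_{k}\to\mathbb Q_\infty X_{k-1}$.
\item Comparing long exact sequences shows that $\pi_n$ of the completion is obtained from $\pi_nX$ by rationalizing each central factor. For $n\ge2$ this gives $\pi_nX\otimes\mathbb Q$, by exactness of $-\otimes\mathbb Q$.
\item For $n=1$ the successive central extensions rationalize to those of $(\pi_1X)_0$, by the uniqueness of Malcev completion on central series (Definition~\ref{hmr} with $P=\emptyset$).
\end{itemize}
Finally, we pass from the tower to $X$ itself. We use $\mathbb Q_\infty X\simeq\operatorname{holim}_n\mathbb Q_\infty X_n$, which holds because $\pi_i$ is determined at a finite stage.

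\emph{Main obstacle.} The main obstacle is the fibre lemma in the inductive step. We would argue it with the simplicial model: for a principal fibration, $GX_k\to GX_{k-1}$ has fibre modelled by a simplicial abelian group, and degreewise Malcev completion is exact on central extensions of free nilpotent quotients. This last fact follows from the lower-central-series description and Proposition~\ref{asnfn}(1).
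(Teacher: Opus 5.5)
\textbf{There are genuine gaps, although the overall plan is the right one.} The paper does not prove Theorem~\ref{q}; it quotes \cite{BK}. Your architecture (principal refined Postnikov tower, the base case $K(A,n)$, a fibre lemma, passage to the limit) is essentially the Bousfield--Kan route. The problem is that the arguments you give for two load-bearing steps do not work, and one claim is false.

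\textbf{The fibre lemma.} The argument you sketch for what you call the main obstacle would fail. In each simplicial degree, $GX_k\to GX_{k-1}$ is a homomorphism of free groups whose kernel is again free, and in general neither abelian nor central. The simplicial abelian group $K(A,n-1)$ models only the \emph{homotopy} fibre. Consequently:
exactness of Malcev completion on central extensions of nilpotent groups does not apply;
degreewise completion need not carry this kernel to the kernel of the completed map;
and substituting $K(A,n-1)$ for the actual kernel before completing would require degreewise completion to preserve weak equivalences between non-free simplicial groups, which the paper explicitly declines to assume.
Proposition~\ref{asnfn}(1) concerns $p$-adic completions of finitely generated nilpotent groups and plays no role here. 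Also, the paper quotes the degreewise model $\mathbb Q_\infty X\simeq\overline W((GX)^{\wedge}_{\mathbb Q})$ only in finite type, while the theorem has no finiteness hypothesis. The input you need is the Bousfield--Kan principal fibration lemma (or their mod-$R$ fibre lemma): $\mathbb Q_\infty$ carries a principal fibration with connected fibre to a fibration up to homotopy. They prove it without degreewise completion of loop groups, and you should cite it rather than rederive it this way.

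\textbf{The passage to the limit.} This is not justified by ``$\pi_i$ is determined at a finite stage''. Milnor's sequence computes $\pi_*\operatorname{holim}_n\mathbb Q_\infty X_n$. But $\mathbb Q_\infty$ is not a priori compatible with inverse limits, so this gives no map-level reason for $\mathbb Q_\infty X\simeq\operatorname{holim}_n\mathbb Q_\infty X_n$. The missing input is homological. The map $X\to X_n$ is an isomorphism on $H_i(-;\mathbb Q)$ for $i\le n$, so $X$ maps to the tower by a pro-isomorphism on rational homology. The Bousfield--Kan tower lemma (such a map induces $R_\infty X\simeq\operatorname{holim}_nR_\infty X_n$) then gives the equivalence.

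\textbf{The rational homology claim.} The first sentence of your homology paragraph is false. That $X\to\mathbb Q_\infty X$ is a $\mathbb Q$-homology isomorphism is the definition of $\mathbb Q$-goodness, and Ivanov and Mikhailov showed that already $S^1\vee S^1$ is $\mathbb Q$-bad. The convergence you invoke is exactly what can fail. Citing $\mathbb Q$-goodness of nilpotent spaces therefore cites the first half of the theorem verbatim. It is cleaner to obtain it from your own induction:
$K(A,n)\to K(A\otimes\mathbb Q,n)$ is a rational homology isomorphism;
the stage fibrations on both sides are principal and hence have trivial coefficients, so Serre spectral sequence comparison gives $H_*(X_k;\mathbb Q)\cong H_*(\mathbb Q_\infty X_k;\mathbb Q)$;
the tower step then transfers this to $X$, since both towers have rational homology that stabilizes in each degree.
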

\begin{theorem}\cite[Chapter~VI, Section~5]{BK}\label{p}
Let $X$ be pointed, connected and nilpotent, with $\pi_1X$ finitely
generated and $\pi_nX$ finitely generated abelian for all $n\geq2$.
Then $X\to(\mathbb F_p)_\infty X$ is a mod-$p$ homology equivalence and
\[
 \pi_1((\mathbb F_p)_\infty X)\cong(\pi_1X)_p^{\wedge},\qquad
 \pi_n((\mathbb F_p)_\infty X)\cong\pi_n(X)\otimes\mathbb Z_p
 \quad(n\geq2).
\]
\end{theorem}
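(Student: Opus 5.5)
This is the classical Bousfield--Kan result quoted from \cite[Chapter~VI, Section~5]{BK}, so I would reduce it to the nilpotent machinery already recalled rather than build anything new. The strategy is induction up a principal refinement of the Postnikov tower of $X$. Because $X$ is nilpotent with finitely generated homotopy groups, each Postnikov stage $X_n\to X_{n-1}$ can be refined into finitely many principal fibrations with fibres $K(A,m)$. Here $A$ is either a finitely generated abelian group or, in degree one, a central factor of a finite central series of $\pi_1X$. I would then show that $(\mathbb F_p)_\infty$ preserves each such fibration sequence and handles each fibre correctly, and finally pass to the limit over the tower.

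\textbf{Step 1: the fibres.} Compute the completion of an Eilenberg--MacLane space $K(A,m)$ with $A$ finitely generated abelian. I would take the Kan loop group $GK(A,m)$ and complete it degreewise, as in steps i--iii above. The answer is $(\mathbb F_p)_\infty K(A,m)\simeq K(A\otimes\mathbb Z_p,m)$, and for finitely generated $A$ this coincides with $K(A^\wedge_p,m)$. The map $K(A,m)\to K(A\otimes\mathbb Z_p,m)$ is a mod-$p$ homology equivalence. For $m=1$ this reduces to the cyclic cases $\mathbb Z$ and $\mathbb Z/q$, where one checks directly that $H_*(-;\mathbb F_p)$ agrees. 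For $m\ge2$ it follows from the $m=1$ case by a Serre spectral sequence (or Eilenberg--Moore) argument on the path fibration.

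\textbf{Step 2: the fibre lemma.} Show that $(\mathbb F_p)_\infty$ carries each principal fibration $K(A,m)\to E\to B$ of the tower to a fibration. This is the Bousfield--Kan mod-$R$ fibre lemma. Its hypothesis is that $\pi_1B$ acts nilpotently on $H_*(K(A,m);\mathbb F_p)$, which holds because the fibration is principal. Given this, comparing the long exact homotopy sequences and applying the five lemma (in its nilpotent-group form in degree one) gives two things. First, $\pi_1$ of the completion of each stage is the pro-$p$ completion of $\pi_1$, obtained by assembling the completed central factors. Second, $\pi_n$ is $\pi_n\otimes\mathbb Z_p$, using exactness of $-\otimes\mathbb Z_p$ on finitely generated modules. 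The same Serre spectral sequence comparison shows that each stage map is a mod-$p$ homology equivalence.

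\textbf{Step 3: passage to the limit.} Since $X\simeq\operatorname{holim}_nX_n$ and $(\mathbb F_p)_\infty$ commutes with this tower up to homotopy in the finite-type case, $\pi_n$ stabilizes. Milnor's $\varprojlim^1$ terms vanish because the completed groups are compact pro-$p$ groups, which satisfy Mittag-Leffler. The homology statement follows because $H_k$ depends only on a finite Postnikov stage. I expect Step~2 to be the main obstacle. Verifying the nilpotent-action hypothesis for the fibre lemma, and showing that completing the central factors produces exactly $(\pi_1X)^\wedge_p$, is where the finite-generation hypothesis is really used (through exactness of pro-$p$ completion on finitely generated nilpotent groups). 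I would handle it by citing the exactness of pro-$p$ completion on such extensions, in the form of Proposition~\ref{asnfn}(1) for central extensions.
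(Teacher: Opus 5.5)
The paper gives no proof of this statement. It quotes \cite[Chapter~VI]{BK}, and the remark that follows explains the finite-generation hypothesis through the general sequence $0\to L_0^p(\pi_nX)\to\pi_n((\mathbb F_p)_\infty X)\to L_1^p(\pi_{n-1}X)\to0$ (stated there for simply connected $X$); finite generation forces $L_1^p=0$ and $L_0^pA=A\otimes\mathbb Z_p$. Your outline follows a different route: it reconstructs the Bousfield--Kan machinery behind that formula (principal refined Postnikov tower, mod-$R$ fibre lemma, tower lemma), but uses finite generation at every stage. In place of $\operatorname{Ext}(\mathbb Z/p^\infty,-)$ and $\operatorname{Hom}(\mathbb Z/p^\infty,-)$ you use flatness of $\mathbb Z_p$ and exactness of pro-$p$ completion on finitely generated nilpotent groups. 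This is more self-contained in the finite-type case, while the cited route gives the general formula and shows exactly what fails without the hypothesis. The strategy is sound, but several steps still have to be supplied.

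First, in Step~1 a mod-$p$ homology equivalence $K(A,m)\to K(A\otimes\mathbb Z_p,m)$ only shows that the two spaces have equivalent completions. To conclude that the completion \emph{is} $K(A\otimes\mathbb Z_p,m)$, you also need this space to be $\mathbb F_p$-complete; for $A=\mathbb Z$ one uses $K(\mathbb Z_p,m)\simeq\operatorname{holim}_kK(\mathbb Z/p^k,m)$. Completing $GK(A,m)$ degreewise is only announced, and steps i--iii would first need a finite-type model.

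Second, the five-lemma comparisons in Step~2 presuppose a comparison map. The natural map starts at $\pi_mE$ (resp.\ $\pi_1E$). It factors through $\pi_mE\otimes\mathbb Z_p$ (resp.\ $(\pi_1E)^\wedge_p$) only after you show that the homotopy groups of the completed stage are $p$-complete in the $\operatorname{Ext}$ sense. That class is closed under extensions of abelian groups and under central extensions of nilpotent groups. In degree one you therefore need $A\otimes\mathbb Z_p$ to be central in $\pi_1$ of the completed stage. This is easiest to get by applying the fibre lemma to $E\to B\to K(A,m+1)$ instead of $K(A,m)\to E\to B$, so that the completed fibration is again principal. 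This is precisely where $L_0^p$ from the paper's remark enters.

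Third, Proposition~\ref{asnfn}(1) concerns the rationalization of $N_p$ and says nothing about exactness of pro-$p$ completion. Left exactness on central extensions of finitely generated nilpotent groups is a separate standard fact that you must cite or prove.

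Finally, in Step~3 the equivalence $(\mathbb F_p)_\infty X\simeq\operatorname{holim}_n(\mathbb F_p)_\infty X_n$ rests on $X\to\{X_n\}$ inducing a pro-isomorphism on mod-$p$ homology (the tower lemma), not on finite type. The $\varprojlim^1$ terms vanish simply because the homotopy towers are eventually constant; no compactness argument is needed.
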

\noindent\emph{Why finite generation occurs in the second theorem.}
For a general abelian group $A$, derived $p$-completion involves
\[
 L_0^pA=\operatorname{Ext}^1_{\mathbb Z}(\mathbb Z/p^{\infty},A),
 \qquad L_1^pA=\operatorname{Hom}_{\mathbb Z}(\mathbb Z/p^{\infty},A).
\]
For example, for simply connected $X$ the general formula is
\[
 0\longrightarrow L_0^p(\pi_nX)
 \longrightarrow\pi_n((\mathbb F_p)_\infty X)
 \longrightarrow L_1^p(\pi_{n-1}X)\longrightarrow0
 \qquad(n\geq2);
\]
see \cite[Chapter~VI, Proposition~5.1]{BK}.  For finitely generated $A$,
$L_1^pA=0$ and $L_0^pA=A\otimes\mathbb Z_p$, which explains the simple
formula above.  Without this hypothesis additional homotopy can appear:
for $A=\mathbb Z/p^{\infty}$ one has $L_0^pA=0$ and
$L_1^pA\cong\mathbb Z_p$, so the completion of $K(A,2)$ is
$K(\mathbb Z_p,3)$.  Finite generation is thus a sufficient condition
for the tensor-product formula, not a requirement for defining
completion itself.  No corresponding finiteness hypothesis is needed
for the rational formula in Theorem~\ref{q}.

Countability alone does not suffice.  For the countable free abelian group
$A=\bigoplus_{j\geq1}\mathbb Z$, ordinary $p$-completion gives
\[
 \widehat A_p=\varprojlim_m A/p^mA
 =\left\{(a_j)\in\prod_{j\geq1}\mathbb Z_p:
       a_j\longrightarrow0\text{ $p$-adically}\right\}
 \supsetneq\bigoplus_{j\geq1}\mathbb Z_p=A\otimes\mathbb Z_p;
\]
the sequence $(p^j)_{j\geq1}$ witnesses strictness.
Conversely, under the hypotheses of Theorem~\ref{p}, writing
$\pi_nX\cong\mathbb Z^{r_n}\oplus T_n$ with $T_n$ finite yields
$\pi_n((\mathbb F_p)_\infty X)\cong
\mathbb Z_p^{r_n}\oplus(T_n)_{(p)}$ for $n\geq2$.
Thus these completed groups are finite when $r_n=0$ and have cardinality
$2^{\aleph_0}$ when $r_n>0$.  Finite type controls the completion
formula; it does not imply countability of the completed homotopy groups.

\subsection{Homotopy limits and colimits}\label{Holim} 
We assume that the reader is familiar with the notions of homotopy limits and colimits appropriate references to our taste are \cite{MV}, \cite{Hir2}.
However we remind some notations for instance.

We call $X$ a $I$-diagram of simplicial sets if $X$ is a functor $X: I\rightarrow SSets$ from the small category $I$ to the category $SSets$ of punctured simplicial sets
and denote the category of such diagrams by $Ssets^I$. We also need the notions of overcategory $(I\downarrow i)$ and of undercategory $(i\uparrow I)$, where $i\in Ob(I)$ \cite[2.1]{Hir2} and their classifying spaces (i.e. nerves of this categories $B(I\downarrow i)$, $B(i\uparrow I)$).
One have $X_0, X_1 : I^{op}\times I\rightarrow Ssets$ - two bifunctors $X_0(i,j)=hom(B(I\downarrow i), X(j))$ and $X_1(i,j):B(I\uparrow I)\times X(j)$, where $hom(A,B)$ is the simplicial mapping space - the internal $Hom$ functor in the category of simplicial sets (i.e. $hom_{SSets}(A,B)$ is also a simplicial set) given for simplicial sets $A,B$ by the formula $hom(A,B)_n=Hom_{SSets}(A\times \Delta^n,B)$.
Now the homotopy limit and colimit are the end and coend of $X_0$ and $X_1$ respectively
$$holim_I X=\int_I X_0\qquad\qquad \text{and}\qquad\qquad hocolim_I X=\int^I X_1.$$
Unravelling the end formula \cite[7.4.6, 7.4.11]{MV}, we view
\[
 \operatorname{holim}_I X\subseteq
 \prod_i\operatorname{hom}(B(I\downarrow i),X(i)).
\]
An $n$-simplex is a family $\xi=\{\xi_i\}_{i\in I}$ such that, for
every morphism $i\to j$, the following diagram commutes; the horizontal
maps are induced by this morphism and the identity of $\Delta^n$:
$$\xymatrix{B(I\downarrow i)\times \Delta^n  \ar[r]\ar[d]_{\xi_i} & B(I\downarrow j)\times \Delta^n\ar[d]_{\xi_j} \\
X(i)\ar[r] &X(j)}$$
commutes and therefore $holim_I X$ can be equivalently defined as a simplicial set of natural transformations of functors 
$holim_I X = hom_{SSets^I} (B(I\downarrow \:),X)$, where 
$hom_{SSets^I} (B(I\downarrow\:),X)_n=hom_{SSets^I}(B(I\downarrow\:)\times \Delta^n,X)$ is a simplicial function complex in the simplicial model category $SSets^I$ \cite[I.5]{GJ}.
Simplicial mapping space functor behaves cute with respect to geometric realization and total singular space functors \cite[I.1]{GJ}. In particular there is a natural isomorphism of the spaces of $n$-simplexes $$(Sing(X^K))_n\cong ((Sing\; X)^K)_n,$$
where $X^K$ is $CGHS$- the compactly generated Hausdorf space  of continuous functions from $|K|$- the geometric realization of $K$ to $X$ and $Sing\; X$ is the total singular simplicial space of $X$. Thus  in $SSets$ we have a natural isomorphism $Sing(X^K)\cong (Sing\;X)^K$.
As a consiquence the similar notions of homotopy limits and colimits for diagrams of topological spaced $T: I\rightarrow CGHS$ 
$$holim_I T=\int_I Hom_{CGHS}(|B(I\downarrow i)|, T(j)) \qquad\qquad hocolim_I T=\int^I |B(i\uparrow I)^{op}|\times T(j)$$ 
are compatible with simplicial ones. For instance, there are natural isomorphisms of topological spaces and simplicial sets respectively \cite[14.2]{Hir2} $|hocolim_I\;X|\cong hocolim|X|$, $Sing(holim\; X)\cong holim(Sing(X)))$ as well as natural weak equivalences for objectwise fibrant and cofibrant diagrams respectively $|holim\; X|\simeq holim|X|$ and $hocolim(Sing\; X)\simeq Sing(hocolim\; X)$.

We refer the reader to \cite{MV} and \cite{Hir2} regarding homotopy inverse (co)limits. Our basic examples:
\begin{example}[Homotopy pullback]
Let $I_{pb}$ be the category $\{1\}\to\{1,2\}\leftarrow\{2\}$,
and consider a diagram
\[
 X_1\xrightarrow{f_1}X_{12}\xleftarrow{f_2}X_2
\]
of Kan complexes.  For an arbitrary diagram of simplicial sets, first
apply a functorial objectwise fibrant replacement.

In the end formula for the homotopy limit,
$N(I_{pb}\downarrow\{1\})=N(I_{pb}\downarrow\{2\})=\Delta[0]$,
whereas $K=N(I_{pb}\downarrow\{1,2\})$ consists of two edges with a
common terminal vertex.  Although $|K|$ is homeomorphic to an interval,
$K$ is not isomorphic to $\Delta[1]$.  In particular, this homeomorphism
does not identify the simplicial mapping spaces $X_{12}^{K}$ and
$X_{12}^{\Delta[1]}$.  Likewise, the unit
$X\to\operatorname{Sing}|X|$ is a weak equivalence, not in general an
isomorphism of simplicial sets.

Instead, use the standard path-object factorization of the diagonal
\[
 X_{12}\xrightarrow{\simeq}X_{12}^{\Delta[1]}
 \xrightarrow{(\operatorname{ev}_0,\operatorname{ev}_1)}
 X_{12}\times X_{12}.
\]
The first map assigns the constant path, and the second is a Kan
fibration, since $X_{12}$ is fibrant and
$\partial\Delta[1]\hookrightarrow\Delta[1]$ is a cofibration; see
\cite[Chapter~I, Sections~5 and~9]{GJ}.  Here the cotensor uses the
unpointed interval; for a pointed target its base point is the constant
map to the base point.
Define
\[
 P=(X_1\times X_2)\times_{X_{12}\times X_{12}}
 X_{12}^{\Delta[1]}.
\]
Thus, degreewise, $P$ consists of triples $(x_1,x_2,\gamma)$ satisfying
$\operatorname{ev}_0\gamma=f_1(x_1)$ and
$\operatorname{ev}_1\gamma=f_2(x_2)$.  The path-object construction of
homotopy pullbacks gives a natural weak equivalence (or a natural zigzag
of weak equivalences, depending on the chosen homotopy-limit model)
\[
 P\simeq\operatorname{holim}_{I_{pb}}X;
\]
see \cite[Example~8.2.8]{MV}.  We henceforth use $P$ as our model for
$X_1\times^h_{X_{12}}X_2$.  In particular, the following square is an
ordinary pullback defining this model:
\begin{equation}\label{hp}
\xymatrix{
 P\ar[r]\ar[d]_{\operatorname{pr}} &
 X_{12}^{\Delta[1]}\ar[d]^{(\operatorname{ev}_0,\operatorname{ev}_1)}\\
 X_1\times X_2\ar[r]^{f_1\times f_2} & X_{12}\times X_{12}.}
\end{equation}
The projection $\operatorname{pr}$ is a Kan fibration.  If the spaces
and maps are pointed, its fiber over the pair of base points is the
based loop-space model $\Omega X_{12}$.

\begin{corollary}[Dyer-Roitberg]\label{dr}
For a diagram of pointed Kan complexes and pointed maps as above, there
is a fiber sequence
\[
 \Omega X_{12}\longrightarrow P
 \xrightarrow{\operatorname{pr}}X_1\times X_2.
\]
With compatible choices of base points, its homotopy exact sequence has,
for $n\geq2$, the segments
\[
 \cdots\longrightarrow\pi_{n+1}(X_{12})
 \longrightarrow\pi_n(P)
 \longrightarrow\pi_n(X_1)\times\pi_n(X_2)
 \xrightarrow{\ f_{1*}-f_{2*}\ }\pi_n(X_{12})
 \longrightarrow\pi_{n-1}(P)\longrightarrow\cdots,
\]
where the loop-space identification is chosen to give the displayed
sign convention.  The low-dimensional end is
\[
 \pi_1(P)\longrightarrow\pi_1(X_1)\times\pi_1(X_2)
 \longrightarrow\pi_1(X_{12})\longrightarrow\pi_0(P)
 \longrightarrow\pi_0(X_1)\times\pi_0(X_2).
\]
This end is interpreted in the usual sense of the homotopy exact
sequence of a fibration, with pointed sets and the fundamental-group
action where appropriate.  In particular, the map from
$\pi_1(X_1)\times\pi_1(X_2)$ to $\pi_1(X_{12})$ is not in general a
group homomorphism, and additive difference notation is not used there.
\end{corollary}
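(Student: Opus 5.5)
The plan is to obtain Corollary~\ref{dr} as the long exact homotopy sequence of the Kan fibration $\operatorname{pr}:P\to X_1\times X_2$ in \eqref{hp}. The only real work is identifying its fiber with $\Omega X_{12}$ and the connecting data with $f_{1*}-f_{2*}$.

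\textbf{Step 1: $\operatorname{pr}$ is a Kan fibration.} The map $(\operatorname{ev}_0,\operatorname{ev}_1):X_{12}^{\Delta[1]}\to X_{12}\times X_{12}$ is a Kan fibration. The cotensor axiom SM7 applied to the cofibration $\partial\Delta[1]\hookrightarrow\Delta[1]$ and the fibrant object $X_{12}$ gives this, as stated in the example. Pullbacks preserve Kan fibrations, so $\operatorname{pr}$ is one. Since $X_1\times X_2$ is Kan, $P$ is Kan as well.

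\textbf{Step 2: the fiber.} Over the base point $(*_1,*_2)$ the fiber consists of paths $\gamma$ with $\operatorname{ev}_0\gamma=f_1(*_1)=*$ and $\operatorname{ev}_1\gamma=f_2(*_2)=*$. This is the simplicial set of maps $\Delta[1]\to X_{12}$ sending both endpoints to the base point, which is the standard model of $\Omega X_{12}$ for a Kan complex. It is weakly equivalent to the fiber of the path fibration $X_{12}^{\Delta[1]}\times_{X_{12}}*\to X_{12}$. Therefore $\pi_k(\text{fiber})\cong\pi_{k+1}(X_{12})$, and the long exact sequence of the fibration gives
\[
\cdots\to\pi_{n+1}(X_{12})\to\pi_n(P)\to\pi_n(X_1)\times\pi_n(X_2)\xrightarrow{\partial}\pi_n(X_{12})\to\pi_{n-1}(P)\to\cdots,
\]
with the usual pointed-set interpretation at the bottom.

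\textbf{Step 3: identify $\partial$.} This is the step I expect to be the main obstacle, because it is where the sign convention enters. For a class $(a_1,a_2)$ represented by spheres $\alpha_i:S^n\to X_i$, I would lift $(\alpha_1,\alpha_2)$ to $P$ over a disk. The obstruction on the boundary sphere is then a map $S^{n-1}\to\Omega X_{12}$. Its adjoint is obtained by gluing $f_1\alpha_1$ on one hemisphere of $S^n$ to the reverse of $f_2\alpha_2$ on the other. Up to the chosen identification $\pi_{n-1}\Omega\cong\pi_n$, this represents $f_{1*}a_1-f_{2*}a_2$; reversing the loop parameter if necessary fixes the sign.

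To make this rigorous without hand-waving, I would compare with the fibration $X_{12}^{\Delta[1]}\to X_{12}\times X_{12}$. Its boundary map $\pi_n(X_{12})^2\to\pi_{n-1}\Omega X_{12}$ is, by a standard computation, $(b_1,b_2)\mapsto b_1-b_2$; for $n\ge2$ this can be checked on the diagonal and on a single factor. Naturality of the long exact sequence under the map of fibrations $f_1\times f_2$ then transfers this formula to $\partial$. For $n=1$ the same gluing gives $(g_1,g_2)\mapsto f_1(g_1)f_2(g_2)^{-1}$, which is not a homomorphism in general. This explains the caveat that the low-dimensional end is read only as an exact sequence of pointed sets with the $\pi_1$-action.
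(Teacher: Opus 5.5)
Your proposal is correct and takes essentially the paper's route: the paper states the corollary without a separate proof and reads it off as the homotopy exact sequence of the Kan fibration $\operatorname{pr}$, the pullback of $(\operatorname{ev}_0,\operatorname{ev}_1)$ in the square defining $P$, with fiber $\Omega X_{12}$ over the base point and the sign fixed by choosing the loop-space identification. Your Step~3 adds detail the paper only asserts: you compute the boundary map of $X_{12}^{\Delta[1]}\to X_{12}\times X_{12}$ on the diagonal and on one factor, then transfer it along $f_1\times f_2$ by naturality, which justifies the formula $f_{1*}-f_{2*}$ and the non-homomorphism caveat in degree one.
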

\begin{lemma}[Degreewise reconstruction of a homotopy pullback]
\label{degreewise-pullback}
In Corollary~\ref{dr}, suppose that $X_1,X_2,X_{12}$ are connected,
and put
\[
 \delta_n=f_{1*}-f_{2*}:\pi_nX_1\oplus\pi_nX_2\longrightarrow\pi_nX_{12}
 \qquad(n\geq2).
\]
There are natural short exact sequences
\[
 0\longrightarrow\operatorname{coker}\delta_{n+1}
 \longrightarrow\pi_nP\longrightarrow\ker\delta_n
 \longrightarrow0\qquad(n\geq2).
\]
Consequently, if $\delta_{n+1}$ is surjective, then
\[
 \pi_nP\cong\pi_nX_1\times_{\pi_nX_{12}}\pi_nX_2.
\]
If $\delta_2$ is surjective, the analogous assertion in degree one is
an isomorphism of groups
\[
 \pi_1P\cong\pi_1X_1\times_{\pi_1X_{12}}\pi_1X_2
\]
for the chosen component.  Moreover, $P$ is connected if the double
coset set
\[
 f_{1*}(\pi_1X_1)\backslash\pi_1X_{12}/f_{2*}(\pi_1X_2)
\]
is a singleton.  These identifications are natural in pointed maps
of the three-corner diagrams.
\end{lemma}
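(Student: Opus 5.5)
The plan is to extract everything from the long exact sequence of the fibration in Corollary~\ref{dr},
\[
 \Omega X_{12}\longrightarrow P\xrightarrow{\operatorname{pr}}X_1\times X_2 .
\]
We use $\pi_k(\Omega X_{12})\cong\pi_{k+1}X_{12}$ and $\pi_k(X_1\times X_2)\cong\pi_kX_1\oplus\pi_kX_2$. We fix the sign convention of Corollary~\ref{dr}, so that the composite
\[
 \pi_{k+1}(X_1\times X_2)\to\pi_k(\Omega X_{12})\cong\pi_{k+1}X_{12}
\]
is $\delta_{k+1}=f_{1*}-f_{2*}$. This identification is the one genuine check. The boundary of the fibration $\operatorname{pr}$ is computed by lifting a sphere $(\alpha_1,\alpha_2)$ in $X_1\times X_2$ to a family of paths. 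Equivalently, restricting $(\operatorname{ev}_0,\operatorname{ev}_1)$ along $f_1\times f_2$ compares the two endpoint maps, and this difference is $f_{1*}\alpha_1-f_{2*}\alpha_2$ up to a global sign that can be absorbed. I will do this via the standard identification of the homotopy fiber of $X_{12}\to X_{12}\times X_{12}$ (diagonal) with $\Omega X_{12}$.

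With this in hand, the portion of the exact sequence around degree $n\ge2$ reads
\[
 \pi_{n+1}X_1\oplus\pi_{n+1}X_2\xrightarrow{\delta_{n+1}}\pi_{n+1}X_{12}\to\pi_nP\to\pi_nX_1\oplus\pi_nX_2\xrightarrow{\delta_n}\pi_nX_{12}.
\]
Exactness gives $0\to\operatorname{coker}\delta_{n+1}\to\pi_nP\to\ker\delta_n\to0$. All groups here are abelian because $n\ge2$, and the maps are homomorphisms.

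If $\delta_{n+1}$ is onto, the cokernel vanishes. Then $\pi_nP\cong\ker\delta_n$, which is exactly the fiber product $\{(a,b):f_{1*}a=f_{2*}b\}$.

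In degree one we use the segment
\[
 \pi_2X_{12}\to\pi_1P\to\pi_1X_1\times\pi_1X_2\to\pi_1X_{12}
\]
in the pointed-set sense. The map $\pi_1P\to\pi_1(X_1\times X_2)$ is a homomorphism, being induced by $\operatorname{pr}$. Its kernel is the image of $\pi_1\Omega X_{12}=\pi_2X_{12}$, and this image is the cokernel of $\delta_2$. So surjectivity of $\delta_2$ makes the map injective. Its image is the set of elements sent to the base point in $\pi_0\Omega X_{12}=\pi_1X_{12}$. Under the path model, that boundary sends $(a,b)$ to $f_{1*}(a)f_{2*}(b)^{-1}$, up to the chosen convention. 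The image is therefore the group-theoretic fiber product, and the result is an isomorphism of groups.

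For $\pi_0$, components of $P$ over the connected base $X_1\times X_2$ correspond to $\pi_0$ of the fiber, namely $\pi_1X_{12}$, modulo the action of $\pi_1(X_1\times X_2)$. That action is $(a,b)\cdot\gamma=f_{1*}(a)\,\gamma\,f_{2*}(b)^{-1}$, which follows again from the path description. The orbit set is the displayed double coset space, so a singleton gives connectedness.

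Naturality holds because the path-object model $P$ and its fibration sequence are functorial in pointed maps of diagrams, and all identifications above are induced by these functorial maps. The main obstacle is pinning down the sign and order conventions of the boundary maps in degrees $1$ and $0$. I will handle these by a direct computation with paths $\gamma$ from $f_1(x_1)$ to $f_2(x_2)$ rather than by quoting the abstract sequence.
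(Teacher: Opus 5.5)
Your proposal is correct and follows the same route as the paper. Both arguments read the short exact sequences, the degree-one group pullback, and the double-coset description of $\pi_0P$ directly off the long exact sequence of the fibration $\operatorname{pr}$ in Corollary~\ref{dr}, and both get naturality from the functorial path-object model. Your explicit identification of the boundary with $\pm(f_{1*}-f_{2*})$ through the diagonal path fibration, together with the remark that the sign affects no kernel, image or cokernel, fills in a detail the paper leaves implicit.
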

\begin{proof}
The short exact sequences are obtained by taking the indicated
segments of the homotopy exact sequence in Corollary~\ref{dr}.
Surjectivity kills the preceding boundary map, and the projection
from $P$ then identifies its homotopy group with the displayed kernel
or group pullback.  The components of $P$ are the orbits of
$\pi_1X_1\times\pi_1X_2$ on $\pi_1X_{12}$ under left and right
multiplication, giving the double coset description.  Naturality
follows from the path-object model and its homotopy exact sequence.
\end{proof}

Corollary~\ref{dr} and Lemma~\ref{degreewise-pullback}, together with
the algebraic arithmetic square already proved in
Theorem~\ref{asn}, give the arithmetic square for nilpotent spaces of
finite type.
\begin{theorem}\label{space-arithmetic}
Let $X$ be a connected nilpotent Kan complex of finite type.  Set
$X_0=\mathbb Q_\infty X$, $X_p=(\mathbb F_p)_\infty X$, and
$B=\prod_pX_p$.  Then the natural map
\[
 X\longrightarrow\overline X
 :=\operatorname{holim}(X_0\longrightarrow\mathbb Q_\infty B
 \longleftarrow B)
\]
is a weak equivalence, hence a homotopy equivalence after realization.
\end{theorem}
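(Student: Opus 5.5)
The plan is to compare homotopy groups degree by degree, using Lemma~\ref{degreewise-pullback} applied to the path-object model $\overline X=P$ of Corollary~\ref{dr}. All three corners are nilpotent of finite type, so Theorems~\ref{q} and~\ref{p} compute their homotopy groups:
\[
 \pi_nX_0\cong\pi_nX\otimes\mathbb Q,\qquad
 \pi_nB\cong\prod_p\pi_nX\otimes\mathbb Z_p\qquad(n\geq2),
\]
with $(\pi_1X)_0$ and $\prod_p(\pi_1X)_p^{\wedge}$ in degree one. Since $B$ is nilpotent, Theorem~\ref{q} also gives $\pi_n\mathbb Q_\infty B\cong(\prod_p\pi_nX\otimes\mathbb Z_p)\otimes\mathbb Q$ and $\pi_1\mathbb Q_\infty B\cong(\prod_p(\pi_1X)_p^{\wedge})_0$. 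There is one point to check here: $B$ is an infinite product, so it is not of finite type mod $\mathbb Q$. However, it is still nilpotent, with nilpotency class bounded uniformly in $p$ because $\pi_1X$ has finite class, and Theorem~\ref{q} carries no finiteness hypothesis. This step is therefore legitimate.

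The key algebraic input is the arithmetic square for $\pi_n$. For $n\geq2$, $\pi_nX$ is finitely generated abelian. Tensoring Proposition~\ref{su} with it gives the short exact sequence
\[
 0\to\pi_nX\to(\pi_nX\otimes\mathbb Q)\oplus\prod_p\pi_nX\otimes\mathbb Z_p
 \xrightarrow{\delta_n}\Bigl(\prod_p\pi_nX\otimes\mathbb Z_p\Bigr)\otimes\mathbb Q\to0.
\]
This sequence is exact because $\prod_p$ commutes with tensoring by a finitely presented module, and $\mathbb Q$ is flat. Note that $(\prod_p\mathbb Z_p)\otimes\mathbb Q$ is exactly the restricted product of Proposition~\ref{su}, and the torsion summands drop out after $\otimes\mathbb Q$. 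Hence every $\delta_n$ with $n\geq2$ is surjective, and its kernel is $\pi_nX$.

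In degree one, Theorem~\ref{asn} (with $P=\pi$ and $G_\pi=G$ for finitely generated nilpotent $G$) says that $\pi_1X$ is the pullback. It also gives the factorization $u=vw$, which shows that the double coset set in Lemma~\ref{degreewise-pullback} is a singleton, so $\overline X$ is connected. Lemma~\ref{degreewise-pullback} then yields
\[
 \pi_n\overline X\cong\pi_nX_0\times_{\pi_n\mathbb Q_\infty B}\pi_nB\cong\pi_nX
 \qquad(n\geq1),
\]
using surjectivity of $\delta_{n+1}$ for $n\geq1$, which in particular uses $\delta_2$.

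It remains to see that these isomorphisms are induced by the natural map $X\to\overline X$. This follows from naturality in Lemma~\ref{degreewise-pullback}. The map $X\to\overline X$ is built from the compatible completion maps together with the canonical homotopy (in fact the identity) $\mathbb Q_\infty(\kappa)\circ\rho_0=\rho_0\circ\kappa$, by naturality of $\rho_0$. Composed with the projections, it therefore induces exactly the completion maps on $\pi_n$, and the kernel identification above is precisely the image of $\pi_nX$. Whitehead's theorem for Kan complexes then gives a weak equivalence, and realization turns it into a homotopy equivalence.

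I expect the main obstacle to be identifying $\pi_1\mathbb Q_\infty B$ with the adelic group $A$ of Proposition~\ref{asnfn}(2), which is needed to invoke Theorem~\ref{asn}. Concretely, one must show that the rationalization of $\prod_pN_p$ is the restricted product, and that $\mathbb Q_\infty$ of an infinite product of $p$-completions behaves as claimed despite the lack of finite type. I would first deduce this from Proposition~\ref{asnfn}(2), which already gives $N_0\cong A$. For the higher groups I would argue directly that $(\prod_p M\otimes\mathbb Z_p)\otimes\mathbb Q$ is the restricted product when $M$ is finitely generated.
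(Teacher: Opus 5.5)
Your proposal is correct and follows essentially the paper's argument. You compute the corners via Theorems~\ref{q} and~\ref{p}, use the arithmetic-square exact sequences to get every $\delta_n$ surjective with kernel $\pi_nX$, and conclude with Lemma~\ref{degreewise-pullback}, the factorization (for connectivity) and naturality of the comparison map. The differences are minor: the paper cites Theorem~\ref{asn} for the finitely generated abelian groups $\pi_nX$ rather than tensoring Proposition~\ref{su}, and it also records the uniform-in-$p$ bound for the nilpotent $\pi_1$-actions on the higher homotopy groups, which your nilpotency check for $B$ should include. Finally, the identification of $\pi_1\mathbb Q_\infty B$ with the adelic group $A$ that you flag as the main obstacle is not actually needed, since Theorem~\ref{asn} is already stated with $(\prod_pG_p)_0$ in the corner.
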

\begin{proof}
For nilpotent finite-type $X$, its fundamental group is finitely
generated and each higher homotopy group is finitely generated abelian.
The nilpotent actions on higher homotopy groups have finite filtrations;
completion preserves these filtrations, with a bound independent of $p$
in each degree.  Thus $B$ is nilpotent as well.  Put $G_n=\pi_nX$.
Theorems~\ref{q} and~\ref{p} give
\[
 \pi_nX_0=(G_n)_0,\quad
 \pi_nB=\prod_p(G_n)_p^{\wedge},\quad
 \pi_n(\mathbb Q_\infty B)=\left(\prod_p(G_n)_p^{\wedge}\right)_0,
\]
with ordinary abelian rationalization when $n\geq2$.
For every $n\geq2$ the arithmetic square gives the short exact sequence
\[
 0\longrightarrow G_n\longrightarrow
 (G_n)_0\times\prod_p(G_n)_p^{\wedge}
 \xrightarrow{\delta_n}\left(\prod_p(G_n)_p^{\wedge}\right)_0
 \longrightarrow0.
\]
Here exactness, including surjectivity of $\delta_n$, is precisely
Theorem~\ref{asn} applied to the finitely generated abelian group
$G_n$; it is not an additional assumption on the spaces.
Lemma~\ref{degreewise-pullback} therefore identifies
$\pi_n\overline X$ with $\ker\delta_n=G_n$.
In particular, the already proved factorization in degree $n+1$
eliminates the possible contribution of that degree.  The remaining
pullback is determined entirely by the arithmetic square of $G_n$.
For $n=1$, surjectivity of $\delta_2$ again gives injectivity into
$\pi_1X_0\times\pi_1B$; the image is the group-theoretic pullback,
which is $G_1$ by Theorem~\ref{asn}.
Finally, the factorization assertion of that theorem says that the two
images of $\pi_1X_0$ and $\pi_1B$ have a single double coset in
$\pi_1(\mathbb Q_\infty B)$.  Hence $\overline X$ is connected.
The natural map from $X$ induces precisely these identifications, and
is therefore a weak equivalence in all degrees.
All these isomorphisms are induced by the canonical comparison map;
no choices of arithmetic factorizations are used to define them.
They are therefore compatible with maps of spaces and, in particular,
with the transition maps of an inverse tower.
\end{proof}
\end{example}
\begin{example}[Homotopy inverse and direct limits]

The inverse category, $$I_s=\{... \rightarrow \bullet\rightarrow\bullet\rightarrow\bullet\}.$$ In this case $holim_{I_s} X$ is the so called homotopy inverse limit.
Dually, the direct category  $$D_s=\{... \leftarrow \bullet\leftarrow\bullet\leftarrow\bullet\}$$ here we call $hocolim_{D_s} X$ - the homotopy direct limit.

The categories $I_s$ and $D_s$ are particular examples of the so called categories of cofibrant and fibrant constants \cite[15.10]{Hir}. 
The corresponding categories  $SSets^{D_s}$ and  $SSets^{I_s}$ of $SSets$-diagrams have the canonical Reedy model categories structures, in which cofibrant diagrams $SSets^{D_s}$ are the direct diagrams, where each map is an embedding of simplicial sets and, dually, a fibrant diagram in $SSets^{I_s}$ is the inverse diagrams, whose initial object is Kan and whose transition maps are Kan fibrations. It follows from \cite[Theorem 19.9.1]{Hir} that for such (co)fibrant diagrams in $SSets^{I_s}$ ($SSets^{D_s}$) the natural maps $$lim \; X\rightarrow holim\; X\quad (hocolim\; X\rightarrow colim\; X)$$
are weak equivalences (and give homotopy equivalences after realization).
\end{example}
\begin{proposition}[Commuting homotopy limits and colimits]\label{hofubini}
For small categories $I,J$ and a diagram $X:I\times J\to SSets$, there
are natural weak equivalences
\[
 \operatorname{holim}_{I\times J}X\simeq
 \operatorname{holim}_I\operatorname{holim}_JX\simeq
 \operatorname{holim}_J\operatorname{holim}_IX,
\]
\[
 \operatorname{hocolim}_{I\times J}X\simeq
 \operatorname{hocolim}_I\operatorname{hocolim}_JX\simeq
 \operatorname{hocolim}_J\operatorname{hocolim}_IX.
\]
If $I$ is finite and acyclic and $J$ is filtered, the natural comparison
\[
 \operatorname{hocolim}_J\operatorname{holim}_IX
 \longrightarrow\operatorname{holim}_I\operatorname{hocolim}_JX
\]
is a weak equivalence, with fibrant replacements understood.  The last
assertion does not permit interchanging an infinite inverse tower with
a filtered homotopy colimit.
\end{proposition}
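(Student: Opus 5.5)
The plan is to reduce everything to the standard model-categorical facts about homotopy limits and colimits in $SSets$. For the Fubini-type equivalences, I will use the end and coend formulas recalled in Subsection~\ref{Holim}. First I identify the overcategory of $(i,j)$ in $I\times J$ with $(I\downarrow i)\times(J\downarrow j)$. Since the nerve preserves products, $B((I\times J)\downarrow(i,j))\cong B(I\downarrow i)\times B(J\downarrow j)$. The exponential law $\operatorname{hom}(A\times B,Y)\cong\operatorname{hom}(A,\operatorname{hom}(B,Y))$ then rewrites the end over $I\times J$ as an iterated end, first over $J$ and then over $I$. This gives an isomorphism, not merely a weak equivalence,
\[
 \operatorname{holim}_{I\times J}X\cong\operatorname{holim}_I\operatorname{holim}_JX
\]
once $X$ is objectwise fibrant. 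Symmetry in $I$ and $J$ gives the second equivalence. The colimit statements are dual: the undercategory nerves also split as products, and $-\times-$ commutes with coends (Fubini for coends). I will cite \cite[18.1.10 and 18.4]{Hir2} and \cite{MV} for these identifications. I will also note that a functorial fibrant replacement is applied first, so that every term is homotopy invariant.

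For the interchange of a filtered homotopy colimit with a finite homotopy limit, I will proceed in three steps.

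\textbf{Step 1.} For $J$ filtered, $\operatorname{hocolim}_J\to\operatorname{colim}_J$ is a weak equivalence, and filtered colimits of simplicial sets preserve weak equivalences and Kan fibrations between Kan complexes. Both facts hold because homotopy groups of Kan complexes and the horn-filling conditions are detected on finite simplicial sets, which are compact.

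\textbf{Step 2.} Since $I$ is finite and acyclic, it is a Reedy category with only degree-raising maps, and its matching objects are finite limits. After a Reedy-fibrant replacement of each $X(-,j)$, chosen functorially in $j$, the functor $\operatorname{holim}_I$ is computed as $\lim_I$ of a Reedy-fibrant diagram, by \cite[Theorem~19.9.1]{Hir}. Moreover, $\operatorname{holim}_I$ is then a finite iterated pullback along Kan fibrations.

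\textbf{Step 3.} Filtered colimits commute with finite limits in $Sets$, hence degreewise in $SSets$. Filtered colimits also preserve Reedy fibrancy, because matching maps are finite limits and fibrations are preserved by Step~1. Together these give
\[
 \operatorname{colim}_J\lim_IX\cong\lim_I\operatorname{colim}_JX
\]
with both sides homotopically meaningful, and the comparison map is identified with this isomorphism.

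The main obstacle I anticipate is Step~3: I must check that the Reedy-fibrant replacement and the colimit interact correctly. My first approach is to use a functorial replacement, such as $\operatorname{Ex}^\infty$ followed by the Reedy factorization, which commutes with filtered colimits up to weak equivalence because it is built from finite-limit and small-object constructions that are finitary.

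Finally, I will justify the caveat with a counterexample showing that the interchange fails for an infinite tower. Take a filtered colimit of towers of discrete sets, for instance the constant towers $\mathbb N_{\geq k}\subset\mathbb N$ indexed by $k$ with inclusions. Then $\lim$ of the colimit is nonempty, while the colimit of the $\lim$'s is empty, and the $\lim^1$ terms in Milnor's sequence also fail to commute. This shows that finiteness of $I$ is essential.
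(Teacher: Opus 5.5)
Your Fubini argument is correct and more explicit than the paper, which only cites \cite[Section~8.5.5]{MV}. The splitting $B((I\times J)\downarrow(i,j))\cong B(I\downarrow i)\times B(J\downarrow j)$, the exponential law, and Fubini for (co)ends give isomorphisms of the Bousfield--Kan models. The mixed assertion is where your proposal has problems.

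\textbf{Step~2 has the wrong orientation.} If every non-identity map of $I$ raises degree, then:
the matching categories are empty and the matching objects are terminal;
Reedy fibrant means only objectwise fibrant;
$I$ lacks cofibrant constants, so \cite[Theorem~19.9.1]{Hir} does not let you replace $\operatorname{holim}_I$ by $\lim_I$.
For example, for the cospan $*\to Y\leftarrow *$ at two points of a connected Kan complex, the strict pullback is empty but the homotopy pullback is not. You need the opposite degree function, e.g.\ $\deg i$ equal to the maximal length of a chain of non-identity arrows starting at $i$. This makes $I$ an inverse category: the latching categories are empty, and the matching objects $\lim_{\partial(i\downarrow I)}X$ are finite limits, which is what you then use. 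With this change Steps~2--3 go through.

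\textbf{Your ``main obstacle'' is not one.} Take any functorial Reedy fibrant replacement $X(-,j)\to RX(-,j)$. By your own Steps~1 and~3, $\operatorname{colim}_JRX$ is Reedy fibrant and weakly equivalent to $\operatorname{colim}_JX$. So it is already a Reedy fibrant replacement of the colimit diagram, and no finitary property of the replacement (nor $\operatorname{Ex}^\infty$) is needed.

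\textbf{The counterexample fails as described.} The limit of a constant tower is its value, so the colimit of the limits is nonempty. Also, a system $\mathbb N_{\geq k}$ indexed by decreasing $k$ along inclusions has a terminal object, so its colimit is just $\mathbb N$. A working example: $X(n,k)=*$ for $n\leq k$ and $X(n,k)=\emptyset$ for $n>k$. This diagram is discrete, so all its maps are Kan fibrations. Then $\lim_nX(n,k)=\emptyset$ for every $k$, while $\operatorname{colim}_kX(n,k)=*$ for every $n$. The paper does not prove this sentence at all; it is a scope remark, so the example is optional.

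\textbf{Comparison of routes.} With these repairs your route is valid but different from the paper's. The paper uses the end formula directly. For finite acyclic $I$ each $B(I\downarrow i)$ is a finite simplicial set, so $\operatorname{holim}_I$ is a finite limit of cotensors $Y^K$ with $K$ finite. These commute with filtered colimits by compactness. Combined with the facts that filtered colimits preserve weak equivalences and Kan complexes, this gives the comparison. That route needs no Reedy structure and no degree function, and only Kan complexes, not fibrations, must be preserved. Your route needs the stronger fact that filtered colimits preserve Kan fibrations, which you do supply. In return it gives an explicit iterated-pullback model and makes it transparent that the comparison map is the strict interchange isomorphism $\operatorname{colim}_J\lim_I\cong\lim_I\operatorname{colim}_J$.
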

\begin{proof}
The first two assertions are the Fubini properties of homotopy
(co)limits; see \cite[Section~8.5.5]{MV}.
For the mixed assertion, a finite acyclic
category has only finitely many nondegenerate composable strings of
arrows.  Its homotopy limit is therefore a finite homotopy-limit
construction.  Filtered colimits of simplicial sets preserve weak
equivalences and commute with finite limits, and filtered colimits of
Kan complexes are Kan.  Applying these facts to a finite homotopy-limit
model proves the comparison; see also \cite[Theorem~14.17]{Hir2}.
\end{proof}

For completeness, homotopy invariance can be checked independently.
For objectwise fibrant diagrams, the Bousfield--Kan end model for
$\operatorname{holim}_I X$ is the mapping space of diagrams from the
cofibrant resolution $N(I\downarrow -)$ of the constant point to $X$.
The simplicial model-category axiom implies that this mapping-space
functor sends weak equivalences between fibrant diagrams to weak
equivalences.  For homotopy colimits, use the simplicial replacement,
whose terms are coproducts of the entries $X(i)$ indexed by composable
strings of arrows.  Objectwise weak equivalences give degreewise weak
equivalences of these bisimplicial sets and hence weak equivalences of
their diagonals.  In the pointed setting the corresponding constructions
use a disjoint base point on the indexing nerves and pointed coproducts.
These arguments justify passage to homotopy categories without assuming
that an arbitrary functor preserves weak equivalences.

The category $SSets^I$ is also a closed simplicial model category (the so called Bousfield-Kan model structure \cite[\S 8, Ch. XI]{BK}) with weak equivalences the maps $\phi:\mathcal{X}\rightarrow \mathcal{Y}\in SSets^I$such that $\phi_i:\mathcal{X}\rightarrow \mathcal{Y}\in SSets$ is a weak equivalence for every $i\in I$. The fibrations in $SSets^I$ are maps $\phi:\mathcal{X}\rightarrow \mathcal{Y}\in SSets^I$ 
such that $\phi_i:\mathcal X(i)\rightarrow\mathcal Y(i)$ is a fibration
for every $i\in I$; cofibrations are characterized by the left lifting
property with respect to objectwise trivial fibrations.  The simplicial
structure is obtained by viewing objects of $SSets^I$ as simplicial objects
in $Sets^I$ \cite[Section~5]{Hir2}.

\begin{definition} Assume that $\mathcal{C}$ is a model category and $F:\mathcal{C}\rightarrow \mathcal{D}$ is a functor. Consider pairs $(G,s)$ consisting of a functor $G: Ho(\mathcal{C})\rightarrow \mathcal{D}$ and natural transformation $s:G\gamma\rightarrow F$. A left derived functor for $F$ is a pair $(LF,t)$ of tis type which is universal from the left, in the sence that if $(G,s)$ is any such pair, then there exists a unique natural transformation $s':G\rightarrow LF$ such that the compositenatural transformation $$G\gamma\xrightarrow{s'\cdot \gamma} (LF)\gamma\xrightarrow{t} F$$ 
is the natural transformation $s$.
\end{definition}

The following proposition of Quillen \cite[]{Qui1} gives a sufficient condition of existence of left derived functors
\begin{proposition}
Let $\mathcal{C}$ is a model category  and $F:\mathcal{C}\rightarrow \mathcal{D}$ is a functor with the property that $F(f)$ is an isomorphism whenever $f$  is a weak equivalence between cofibrant objects 
in $\mathcal{C}$. Then the left derived functor $(LF,t)$ of $F$ exists, and for each cofibrant object $X$ of $\mathcal{C}$ the map
$$t_X:LF(X)\rightarrow F(X)$$
is an isomorphism.
\end{proposition}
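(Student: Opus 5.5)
The plan is to build $LF$ from a functorial cofibrant replacement and then derive the universal property from naturality. I will use the model-category axioms to fix a functorial factorization $\emptyset\to QX\xrightarrow{q_X}X$, with $QX$ cofibrant and $q_X$ a trivial fibration. If the factorization is not functorial, I will choose one for each object and use homotopy classes of maps, as Quillen does.

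For a map $f:X\to Y$ I take the lift $Qf:QX\to QY$. Any two such lifts are left homotopic through a cylinder object for the cofibrant $QX$. The two end inclusions $QX\rightrightarrows \mathrm{Cyl}(QX)$ are weak equivalences between cofibrant objects, and they share a retraction. By hypothesis $F$ sends both to isomorphisms, so $F$ of each is inverse to $F$ of the retraction, and hence they coincide. Therefore $F(Qf)$ is independent of the lift, and $X\mapsto F(QX)$ is a functor $\mathcal C\to\mathcal D$.

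Next, I will check that this functor inverts weak equivalences. If $f$ is a weak equivalence, then $Qf$ is a weak equivalence between cofibrant objects by 2-out-of-3, so $F(Qf)$ is an isomorphism. By the universal property of the localization $\gamma:\mathcal C\to Ho(\mathcal C)$, the functor factors uniquely as $LF\circ\gamma$. I then set $t_X=F(q_X):LF(\gamma X)=F(QX)\to F(X)$, which is natural because $q_Y\circ Qf=f\circ q_X$. When $X$ is cofibrant, $q_X$ is a weak equivalence between cofibrant objects, so $t_X$ is an isomorphism. This gives the last assertion of the proposition.

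For universality, let $(G,s)$ be a pair with $s:G\gamma\to F$. I define $s'_X:G(\gamma X)\to LF(\gamma X)=F(QX)$ as the composite
\[
G(\gamma X)\xrightarrow{G(\gamma q_X)^{-1}}G(\gamma QX)\xrightarrow{s_{QX}}F(QX).
\]
This is well defined because $\gamma q_X$ is an isomorphism in $Ho(\mathcal C)$. Naturality of $s'$ on maps of the form $\gamma f$ follows from the naturality of $s$ and the relation $q_Yf'=fq_X$. Every morphism of $Ho(\mathcal C)$ is a composite of such maps and inverses of weak equivalences, so $s'$ is natural on all of $Ho(\mathcal C)$.

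The equality $t\circ s'\gamma=s$ reduces to $F(q_X)s_{QX}=s_X\,G(\gamma q_X)$, which is again the naturality of $s$. For uniqueness, suppose $s''$ also satisfies $t\circ s''\gamma=s$. Naturality of $s''$ with respect to $\gamma q_X$, combined with the fact that $t_{QX}$ is invertible ($QX$ is cofibrant), forces $s''_X=s'_X$.

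I expect the main obstacle to be the first step, namely showing that $F(Qf)$ does not depend on the chosen lift. That step is where cofibrancy and the homotopy relation are really used; after it, the rest is formal bookkeeping of naturality.
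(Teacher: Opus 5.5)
The paper gives no proof of this proposition; it only attributes it to Quillen, so there is no internal argument to compare yours against. Your proof is correct and is the standard one (essentially Dwyer--Spalinski, Proposition~9.3). You induce $LF$ from $X\mapsto F(QX)$, set $t_X=F(q_X)$, and define $s'_X=s_{QX}\circ G(\gamma q_X)^{-1}$. Your checks of naturality, of $t\circ s'\gamma=s$, and of uniqueness via invertibility of $t_{QX}$ all go through as you describe.

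Two small points are worth making explicit.

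First, in the lift-independence step the cylinder must be a good one, meaning $QX\sqcup QX\to\mathrm{Cyl}(QX)$ is a cofibration. That is what makes $\mathrm{Cyl}(QX)$ cofibrant, and hence makes the end inclusions weak equivalences between cofibrant objects. The homotopy you need is obtained by lifting $(g_1,g_2)$ along this cofibration against the trivial fibration $q_Y$, so it automatically lives on such a cylinder.

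Second, if you do have functorial factorizations, that step is unnecessary. In that case $F\circ Q$ is already a functor, and it inverts weak equivalences by two-out-of-three. The step you identify as the main obstacle is therefore only needed in Quillen's non-functorial setting.
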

We will also need Quillen's definition of a total derived functor.
\begin{definition}
Assume we are given $F:\mathcal{C}\rightarrow \mathcal{D}$ a functor between model categories. A total derived functor $LF$ for $F$ is a functor 
$$LF: Ho(\mathcal{C})\rightarrow Ho(\mathcal{D})$$
which is a left derived functor for the composite $\gamma_{\mathcal{D}}\cdot F: \mathcal{C}\rightarrow Ho(\mathcal{D})$.
Similarly, a total right derived functor $RF$ for $F$ is a functor $RF: Ho(\mathcal{C})\rightarrow Ho(\mathcal{D})$ which is a right derived functor for the composite $\gamma_{\mathcal{D}}\cdot F$.
\end{definition}

The constant-diagram (or diagonal) functor
\[
\Delta_I:SSets\longrightarrow SSets^I
\]
sends a simplicial set $K$ to the constant $I$-diagram with value $K$.
It fits into the familiar adjoint triple
\[
 \operatorname{colim}_I\dashv \Delta_I\dashv \operatorname{lim}_I.
\]

\begin{theorem}[Homotopy (co)limits as derived functors]
\label{thm:derived-holim-hocolim}
Let $I$ be a small category.  The Bousfield--Kan homotopy colimit and
homotopy limit induce functors
\[
 \operatorname{hocolim}_I:Ho(SSets^I)\longrightarrow Ho(SSets),
 \qquad
 \operatorname{holim}_I:Ho(SSets^I)\longrightarrow Ho(SSets).
\]
With respect to the adjoint triple
$\operatorname{colim}_I\dashv\Delta_I\dashv\operatorname{lim}_I$, there
are natural isomorphisms
\[
 \operatorname{hocolim}_I\cong \mathbf L\operatorname{colim}_I,
 \qquad
 \operatorname{holim}_I\cong \mathbf R\operatorname{lim}_I.
\]
Thus the homotopy colimit is the total left derived functor of the
ordinary colimit, whereas the homotopy limit is the total right derived
functor of the ordinary limit.
\end{theorem}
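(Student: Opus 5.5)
The plan is to exhibit $\operatorname{hocolim}_I$ and $\operatorname{holim}_I$ as the total derived functors of the Quillen pairs attached to the adjoint triple $\operatorname{colim}_I\dashv\Delta_I\dashv\lim_I$, using two model structures on $SSets^I$ with the same objectwise weak equivalences. For limits I will use the Bousfield--Kan (projective-fibration) structure recalled above. In it $\Delta_I$ preserves fibrations and trivial fibrations, since these are objectwise. Hence $(\Delta_I,\lim_I)$ is a Quillen pair and $\mathbf R\lim_I$ exists; on a fibrant diagram $Y$ it is computed as $\lim_IY$.

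For colimits I will use the same projective structure. Cofibrations there are detected by the left lifting property, and $\Delta_I$ is right Quillen, so $\operatorname{colim}_I$ is left Quillen. Quillen's existence proposition stated above then applies, because Ken Brown's lemma gives invariance of $\operatorname{colim}_I$ on weak equivalences between cofibrant diagrams. Both hocolim and holim descend to homotopy categories by the homotopy-invariance paragraph following Proposition~\ref{hofubini}: replace objectwise fibrantly for holim, and use the simplicial replacement for hocolim. This gives the functors $Ho(SSets^I)\to Ho(SSets)$.

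The next step is to build natural comparison maps. For a fibrant $Y$ I use the end model $\operatorname{holim}_IY=\hom_{SSets^I}(B(I\downarrow -),Y)$. The collapse $B(I\downarrow -)\to *$ is an objectwise weak equivalence, since each $B(I\downarrow i)$ has a terminal object and is therefore contractible. It induces $\lim_IY=\hom(*,Y)\to\operatorname{holim}_IY$. Dually, for cofibrant $X$, the map $B(-\uparrow I)^{op}\to *$ yields $\operatorname{hocolim}_IX\to\operatorname{colim}_IX$.

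The key claim is that $B(I\downarrow -)$ is projectively cofibrant, i.e.\ a cofibrant resolution of the constant point. I would check this via its latching-type filtration: in each simplicial degree it is a free $I$-diagram, a coproduct of representables $\coprod_{i_0\to\cdots\to i_n}I(i_n,-)$, and free diagrams are cofibrant. Granting this, the SM7 axiom in $SSets^I$ shows that $\hom(-,Y)$ sends the weak equivalence $B(I\downarrow-)\to *$ between cofibrant objects to a weak equivalence whenever $Y$ is fibrant. Hence $\lim_IY\simeq\operatorname{holim}_IY$. The colimit case is dual, via $X\otimes_I B(-\uparrow I)^{op}$ and the cofibrancy of $X$.

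Finally, the universal property follows formally. Both derived functors are computed by applying the functor to a functorial (co)fibrant replacement, and holim and hocolim are homotopy invariant. Composing with the replacement maps therefore gives natural isomorphisms in $Ho(SSets)$ of the form $\operatorname{holim}_IY\cong\operatorname{holim}_IRY\simeq\lim_IRY=\mathbf R\lim_IY$, and similarly for colimits. Naturality holds because every map used is natural in the diagram.

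I expect the main obstacle to be the cofibrancy statement for the nerve diagrams, together with the careful bookkeeping of which model structure is used. The Bousfield--Kan structure in the text is projective-fibrant, so it handles $\operatorname{colim}$ and the contractibility of nerves directly. For limits the paper's model instead requires objectwise-fibrant diagrams mapped out of a cofibrant resolution of the point. I would settle this by citing Hirschhorn's Theorem~18.9.1 as a cross-check for the $\Delta_I$/$\lim$ comparison.
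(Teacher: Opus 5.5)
Your overall route matches the paper's: compare the end and coend models with $\lim_I$ and $\operatorname{colim}_I$ after a (co)fibrant replacement. The colimit half is sound. In the projective (Bousfield--Kan) structure $\Delta_I$ preserves objectwise fibrations and trivial fibrations, so $\operatorname{colim}_I\dashv\Delta_I$ is a Quillen pair. Moreover $\operatorname{hocolim}_IX\to\operatorname{colim}_IX$ is a weak equivalence for projectively cofibrant $X$. Strictly this is not ``dual to SM7''; it is the fact that $X\otimes_I(-)$ preserves objectwise weak equivalences when $X$ is projectively cofibrant.

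The limit half has a genuine error. That $\Delta_I$ preserves fibrations and trivial fibrations makes $\Delta_I$ a \emph{right} Quillen functor; it is the same fact you use for $\operatorname{colim}_I\dashv\Delta_I$. For $\Delta_I\dashv\lim_I$ to be a Quillen pair, $\Delta_I$ must instead preserve cofibrations and trivial cofibrations, and projectively it does not. Correspondingly, your SM7 step needs both ends of $B(I\downarrow-)\to *$ to be cofibrant. The constant point is not projectively cofibrant: for $I=I_{pb}$ its latching map $*\sqcup*\to*$ is not a monomorphism. The conclusion itself fails. Let $E$ be the nerve of the indiscrete groupoid on two objects $a\neq b$, and put $Y=(*\xrightarrow{a}E\xleftarrow{b}*)$. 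Then $Y\to *$ is an objectwise weak equivalence between objectwise fibrant diagrams, yet $\lim_IY=\emptyset$ and $\lim_I*=*$, while $\operatorname{holim}_IY$ is contractible. By Ken Brown's lemma, $\lim_I$ is therefore not right Quillen projectively. So $\mathbf R\lim_I$ is not computed by $\lim_I$ on objectwise fibrant diagrams, and $\lim_IY\simeq\operatorname{holim}_IY$ is false there.

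The repair is to use, for the limit half only, the injective structure on $SSets^I$: objectwise monomorphisms as cofibrations and objectwise weak equivalences. It exists and is simplicial, since $SSets$ is combinatorial and the pushout-product axiom holds objectwise. When $I$ is Reedy with fibrant constants, as for the towers $I_s$, a Reedy structure also works. There $\Delta_I$ preserves cofibrations and trivial cofibrations, so $\Delta_I\dashv\lim_I$ is Quillen. Every diagram, including $*$ and $B(I\downarrow-)$, is cofibrant. For an injective fibrant replacement $Y\to RY$, SM7 then gives $\lim_IRY=\hom(*,RY)\xrightarrow{\sim}\hom(B(I\downarrow-),RY)=\operatorname{holim}_IRY$. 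Your projective cofibrancy of $B(I\downarrow-)$ then supplies the remaining link $\operatorname{holim}_IY\xrightarrow{\sim}\operatorname{holim}_IRY$. This holds because $Y\to RY$ is a weak equivalence between objectwise fibrant diagrams; injective fibrations are in particular objectwise fibrations. The natural zigzag $\operatorname{holim}_IY\simeq\operatorname{holim}_IRY\simeq\lim_IRY=\mathbf R\lim_IY$ is what the paper's ``appropriate diagram model structure'' implicitly relies on.
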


\begin{proof}
Equip $SSets^I$ with a model structure whose weak equivalences are
objectwise weak equivalences.  If $QX\xrightarrow{\sim}X$ is a cofibrant
replacement of an $I$-diagram, then the Bousfield--Kan coend formula is
naturally weakly equivalent to $\operatorname{colim}_I QX$.  Consequently
it sends weak equivalences between cofibrant diagrams to weak
equivalences and satisfies the universal property of the total left
derived functor of $\operatorname{colim}_I$.

Dually, if $X\xrightarrow{\sim}RX$ is a fibrant replacement in an
appropriate diagram model structure, then the Bousfield--Kan end formula
is naturally weakly equivalent to $\operatorname{lim}_I RX$.  It therefore
satisfies the universal property of the total right derived functor of
$\operatorname{lim}_I$.  These identifications are natural in $X$; see
\cite[Theorems~19.9.1 and~19.9.2]{Hir}.
\end{proof}

Notice that the theorem does not assert that homotopy (co)limits are
derived functors of $\Delta_I$.  Rather, $\Delta_I$ is the middle functor
in the two adjunctions, while the derived functors are taken from its left
and right adjoints.  Equivalently, on homotopy categories one obtains the
derived adjunctions
\[
 \operatorname{hocolim}_I\dashv Ho(\Delta_I)
 \dashv \operatorname{holim}_I.
\]

\section{Prounipotent presentations and profinite generator spaces}\label{s5}
We use the generator-space and continuity conventions of
\cite[Sections~1--2]{MikhContinuous}.  For a set $S$, let
$S^+=S\sqcup\{*\}$ have isolated points $s\in S$ and cofinite
neighbourhoods of $*$.  A family indexed by $S$ is called convergent
to $1$ when it extends to a continuous pointed map from $S^+$.

In characteristic zero, put
\[
 E_S=k^S,\qquad E_S^\vee=\operatorname{Hom}_{\mathrm{cts}}(E_S,k)
       =k^{(S)}.
\]
Here $E_S$ has the product topology, $k$ is discrete, and $k^{(S)}$
is the direct sum, regarded as a discrete vector space.  Define
\[
 \widehat{\mathbb L}(E_S)
 =\varprojlim_{W,c}
   \mathbb L(E_S/W)/\gamma_{c+1}\mathbb L(E_S/W),
 \qquad F_k(S)=\exp\widehat{\mathbb L}(E_S),
\]
where $W$ runs through open subspaces of finite codimension, $c\geq1$,
and $\mathbb L$ is the ordinary free Lie algebra.  The displayed
quotients are finite-dimensional nilpotent Lie algebras.  On the
$k$-points of a prounipotent group we use the inverse-limit topology
with discrete finite-dimensional unipotent quotients.  The category
itself has prounipotent group schemes as objects and group-scheme
morphisms as maps; normal closures and quotients are taken in this
category.

The canonical generators of $F_k(S)$ are $\exp(e_s)$, where $e_s\in E_S$
is the coordinate vector.  Their images in every finite-dimensional
unipotent quotient are $1$ for all but finitely many $s$.  By the
universal property of the completed free Lie algebra, continuous
pointed assignments $S^+\to U(k)$ extend uniquely to morphisms
$F_k(S)\to U$; see \cite[Lemma~2.2]{MikhContinuous}.

In characteristic $p$ we work with pro-$p$ groups and continuous
homomorphisms.  Write $F_p(S)$ for the free pro-$p$ group on the
convergent family indexed by $S$, characterized by
\[
 \operatorname{Hom}_{\mathrm{cts}}(F_p(S),U)
 \cong\operatorname{Map}_{\mathrm{cts},*}(S^+,U).
\]
We write $F(S)$ for either construction, according to the category;
in formulas common to both cases, $U(k)$ denotes $U$ in the pro-$p$
case.  For infinite $S$, freeness here refers to the specified pointed
profinite generator space $S^+$.  The identifications with degreewise
completions of discrete presentation models used below concern finite
rank in each simplicial degree.

For finite $S$ and characteristic zero, $F_k(S)$ is the inverse limit
of the unipotent completions of $\Phi(S)/\gamma_s\Phi(S)$.
For arbitrary $S$, the coordinate Hopf algebra has the description
\[
 \mathcal O(F_k(S))\cong T(E_S^\vee)=T(k^{(S)}),
\]
with shuffle multiplication and deconcatenation coproduct
\cite[Example~4]{Mikh2023}.  Thus
\[
 \mathcal O(F_k(S))\cong
 \varinjlim_{T\subseteq S,\ |T|<\infty}\mathcal O(F_k(T)),
 \qquad
 F_k(S)\cong\varprojlim_{T\subseteq S,\ |T|<\infty}F_k(T).
\]
The transition maps retain the generators in $T$ and send the others
to $1$.  The corresponding pro-$p$ construction has coordinate algebra
given by locally constant $\mathbb F_p$-valued functions
\cite[Example~6]{Mikh2023}; it likewise satisfies
$F_p(S)\cong\varprojlim_{T\subseteq S,\ |T|<\infty}F_p(T)$.
For the generator spaces, $S^+\cong\varprojlim_T T^+$.
We also use the notation $(\widehat S^*,*)$ for this chosen pointed
profinite space, so that $F(\widehat S^*,*)$ means $F(S)$.
A decomposition into finitely many disjoint subsets gives the
corresponding coproduct in the chosen category; in particular,
$F(X\sqcup Y)\cong F(X)\amalg F(Y)$.

Lifts of an infinite family are always required to be convergent.
The existence of such lifts follows from the projective lifting
statement \cite[Proposition~2.3]{MikhContinuous}: if $q:U\twoheadrightarrow V$
is an epimorphism in the chosen category, every morphism $F(S)\to V$
lifts to a morphism $F(S)\to U$.  In characteristic zero, choose a
continuous linear section of the surjection
$\operatorname{Lie}(U)\to\operatorname{Lie}(V)$ and extend its composite
with the map on $E_S$ by the completed free Lie universal property.
The linear section need not preserve brackets.  In the pro-$p$ case
use projectivity of the free pro-$p$ group.  This produces one
continuous morphism and hence a convergent family of lifts in either
case, with no restriction on $|S|$.

\begin{definition}\label{d7}
Let $F=F(X)$ and let $R$ be a closed normal subgroup in the chosen
category.  Put
\[
 V_R=\begin{cases}
 (R/\overline{[R,F]})(k),&\operatorname{char}(k)=0,\\
 R/\overline{R^p[R,F]},&k=\mathbb F_p.
 \end{cases}
\]
A simplicial presentation with a specified relator basis consists of
a continuous pointed map $\tau:Y^+\to R(k)$ whose image normally
generates $R$, together with a topological linear isomorphism
$k^Y\xrightarrow{\sim}V_R$ sending $e_y$ to the class of
$r_y=\tau(y)$.  Equivalently, these classes are the coordinate
pseudobasis dual to a chosen basis of the discrete space
$V_R^\vee\cong k^{(Y)}$.  With $G=F/R$, the associated diagram is
\begin{equation}
\xymatrix{
 F(X\sqcup Y)\ar@<0ex>[r]^{d_0}\ar@<-2ex>[r]^{d_1}
 &F(X)\ar@<-2ex>[l]_{s_0}\ar[r]^{\pi}&G
}\label{5}
\end{equation}
where
\[
 d_0(x)=d_1(x)=x,\qquad d_0(y)=1,\qquad d_1(y)=r_y,
 \qquad s_0(x)=x.
\]
Continuity of $\tau$ and the universal property of $F(X\sqcup Y)$
make these maps morphisms in the chosen category.
\end{definition}

The basis condition makes $\tau$ injective; since $Y^+$ is compact
and $R(k)$ has the Hausdorff inverse-limit topology, $\tau$ identifies
$Y^+$ with its image as a pointed profinite space.
Continuous lifts of a prescribed coordinate pseudobasis of $V_R$
exist: apply the preceding projective lifting statement to the
quotient $R\to V_R$ and the morphism $F(Y)\to V_R$ defined by the
coordinate vectors.  In characteristic zero, $V_R$ here denotes the
corresponding commutative prounipotent group as well as its $k$-points.
The specified normal-generation and basis conditions are the data
required for the presentation theory of
\cite[Section~4.5]{Mikh2023}.  For the finite discrete subpresentations
used below, the next lemma verifies these conditions for the given
words of relations; continuity of their finite indexing family is
automatic.

\begin{lemma}[Independence of the chosen relators]\label{lem:relator-independence}
Let $(X_1\mid Y_1)$ be a finite subpresentation of a contractible
presentation $(X\mid Y)$.  The images of $r_y$, $y\in Y_1$, span a
free direct summand of $\mathbb Z^{(X_1)}$ with this specified basis.
Let $F=F_k(X_1)$ in characteristic zero, or let $F$ be the free pro-$p$
group on $X_1$ with $k=\mathbb F_p$, and let $R_1$ be the closed normal
subgroup generated by these relators.  There are canonical isomorphisms
\[
 k^{Y_1}\xrightarrow{\ \sim\ }V_1,\qquad
 V_1=\begin{cases}
 (R_1/\overline{[R_1,F]})(k),&\operatorname{char}(k)=0,\\
 R_1/\overline{R_1^p[R_1,F]},&k=\mathbb F_p,
 \end{cases}
\]
which send the coordinate vectors to the classes of the chosen relators.
Moreover, these relators extend to a free topological basis of $F$.
\end{lemma}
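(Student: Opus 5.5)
The plan is to derive everything from the integral statement about abelianized relators, and then transport it to the completed categories by a Frattini/Nakayama-type argument.

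\emph{Step 1: the abelian statement.} By Lemma~\ref{l.1.3}, the classes $\overline r_y$, $y\in Y$, form a basis of $\mathbb Z^{(X)}$. Since $Y_1\subseteq Y$ is part of a basis, the classes $\overline r_y$, $y\in Y_1$, are linearly independent and span a direct summand of $\mathbb Z^{(X)}$. These words lie in $\Phi(X_1)$, so their images lie in the summand $\mathbb Z^{(X_1)}$. Let $M$ be their span. Then $M$ is a summand of $\mathbb Z^{(X)}$, and $M\subseteq\mathbb Z^{(X_1)}$. Hence $\mathbb Z^{(X)}/M$ is free, and $\mathbb Z^{(X_1)}/M$ embeds in it, so $\mathbb Z^{(X_1)}/M$ is torsion-free and finitely generated. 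It is therefore free, and $M$ is a direct summand of $\mathbb Z^{(X_1)}$ with basis $\{\overline r_y\}_{y\in Y_1}$.

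Choose words $z\in\Phi(X_1)$, $z\in Z$, whose classes complete this basis to a basis of $\mathbb Z^{(X_1)}$. Then $|Y_1|+|Z|=|X_1|$. After tensoring with $k=\mathbb Q$ or $\mathbb F_p$, the family $\{\overline r_y\}\sqcup\{\overline z\}$ is a basis of $k^{X_1}$.

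\emph{Step 2: free basis of $F$.} Consider the morphism $\phi:F(Y_1\sqcup Z)\to F$ sending the generators to $r_y$ and $z$. It induces an isomorphism on abelianizations tensored with $k$: on $F_{\mathrm{ab}}(k)=k^{X_1}$ in characteristic zero, and on $F/F^p[F,F]=\mathbb F_p^{X_1}$ in the pro-$p$ case.

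Surjectivity comes from the Frattini argument. For pro-$p$ groups, a closed subgroup mapping onto the Frattini quotient is everything. For prounipotent groups, the same holds by passing to the finite-dimensional unipotent quotients, where a subgroup surjecting onto the abelianization is the whole group; equivalently, one can argue with the associated graded of the completed free Lie algebra.

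For injectivity, the sources and targets are free of equal finite rank. In the pro-$p$ case, a surjection between finitely generated free pro-$p$ groups of the same rank is an isomorphism, by Hopficity. In characteristic zero, $\phi$ induces a surjection on each nilpotent quotient $F/\gamma_{c+1}$. These quotients are finite-dimensional unipotent groups of equal dimension, which can be computed from the graded pieces $L_{k,m}$ as in the proof of Proposition~\ref{asnfn}. So each induced map is an isomorphism, and passing to the limit shows that $\phi$ is an isomorphism. Thus the relators $r_y$ together with the elements $z$ form a free topological basis of $F$.

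\emph{Step 3: identification of $V_1$.} In the new basis, $R_1$ is the closed normal subgroup generated by part of a free basis. So $F/R_1\cong F(Z)$, and $R_1$ is the kernel of the retraction $F(Y_1\sqcup Z)\to F(Z)$.

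To compute $V_1$, use the five-term exact sequence for $1\to R_1\to F\to F(Z)\to1$ with coefficients in $k$. This can be read as continuous group cohomology in the pro-$p$ case, or as comodule cohomology of the prounipotent groups in characteristic zero, via the duality with $\mathcal O$. The sequence gives
\[
0\to H_2(F(Z))\to V_1\to H_1(F)\to H_1(F(Z))\to0,
\]
where $V_1=H_0(F(Z),H_1(R_1))$. Since $F(Z)$ is free, $H_2(F(Z))=0$. So $V_1$ is identified with the kernel of $k^{X_1}\to k^{Z}$, which is $k^{Y_1}$, spanned by the classes of $r_y$. This is the canonical isomorphism $k^{Y_1}\to V_1$ of the statement.

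The main obstacle is making Steps 2 and 3 rigorous in the prounipotent category. This requires the Frattini and Hopfian properties for $F_k(S)$ with $S$ finite, and the vanishing of $H_2$ for free prounipotent groups. I would try to reduce both to the finite-dimensional nilpotent quotients, where they follow from the graded Lie algebra description.
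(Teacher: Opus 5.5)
Your proposal is correct, and it reaches the $V_1$ identification by a genuinely different route from the paper. Steps 1 and 2 agree with the paper in substance. The paper obtains the summand by restricting the coordinate projection $\mathbb Z^{(X)}\to L$ to $\mathbb Z^{(X_1)}$, where you use torsion-freeness of $\mathbb Z^{(X_1)}/M$. The paper also lifts a $k$-basis completion, whereas you choose integral words once for all $k$. These are interchangeable, and the basis-theorem/Hopfian step is the same; your dimension count on the quotients $F/\gamma_{c+1}$ spells out the prounipotent case.

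The difference is in Step 3. The paper identifies $V_1$ \emph{before}, and independently of, the free-basis statement. Normal generation gives a surjection $k^{Y_1}\twoheadrightarrow V_1$, because conjugation acts trivially on $V_1$ and a finite span is closed. The composite $k^{Y_1}\to V_1\to\operatorname{Ab}_k(F)\cong k^{X_1}$ is the injection from Step 1, so the surjection is also injective. This incidentally also shows that $V_1\to\operatorname{Ab}_k(F)$ is injective.

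You instead first make $F/R_1\cong F(Z)$ free, then read off $V_1$ from the five-term sequence using $H_2(F(Z))=0$. This is valid, though the sequence should begin $H_2(F)\to H_2(F(Z))\to V_1$ rather than $0\to H_2(F(Z))$; this is harmless here. It costs you the five-term sequence in continuous and Hochschild (co)homology, the duality between $V_1$ and $H^1(R_1,k)^F$, and cohomological dimension at most one for free prounipotent groups. That machinery is exactly the obstacle you flag.

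The paper's two-line argument needs none of this. It uses only linear independence of the abelianized relators, so it would apply even to relators that do not extend to a free basis. Your route buys a conceptual picture in exchange: $V_1$ appears as the coinvariants $H_1(R_1)_{F/R_1}$, with no $H_2$ correction. This is the same mechanism the paper later exploits through the cohomological asphericity criterion.
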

\begin{proof}
By Lemma~\ref{l.1.3}, the images of all $r_y$, $y\in Y$, form a basis
of $\mathbb Z^{(X)}$.  Let $L$ be the span of those indexed by $Y_1$.
The coordinate projection $\mathbb Z^{(X)}\to L$ restricts to a
retraction $\mathbb Z^{(X_1)}\to L$, since every $r_y$ with $y\in Y_1$
is a word in $X_1$.  This proves the direct-summand assertion, including
when the ambient contractible presentation is infinite.

Tensoring with $k$ gives an injection $k^{Y_1}\hookrightarrow k^{X_1}$.
Normal generation gives a surjection $k^{Y_1}\twoheadrightarrow V_1$:
conjugation by $F$ is trivial in $V_1$, so the images of the specified
normal generators span it.  In characteristic zero this is a statement
about vector groups; in characteristic $p$ it is a statement about
compact elementary abelian groups.  Since $Y_1$ is finite, its span is
closed in both cases.  The composite
\[
 k^{Y_1}\longrightarrow V_1\longrightarrow\operatorname{Ab}_k(F)
       \cong k^{X_1}
\]
is the preceding injection.  Thus the first map is also injective and
is the claimed canonical isomorphism.

Finally extend these vectors to a basis of $k^{X_1}$ and lift the added
vectors to $F$.  The basis theorem for finite-rank free prounipotent
groups, respectively the pro-$p$ Burnside basis theorem and the Hopfian
property of finitely generated profinite groups, shows that the resulting
family is a free topological basis.  This step takes place entirely
inside the completion of $\Phi(X_1)$; no basis theorem for an infinite
ambient pro-$p$ group is required.
\end{proof}

\begin{remark}
In particular, every finite contractible discrete presentation gives a
simplicial presentation with the specified relator basis of
Definition~\ref{d7}.  Indeed, the relators normally generate
$\Phi(X)$, so their images generate the whole closed normal subgroup
$R=F(X)$ after completion.  Lemma~\ref{l.1.3} gives
$|X|=|Y|=n$ and a unimodular exponent-sum matrix.  Over $k$ this
identifies
\[
 k^Y\xrightarrow{\sim}V_R=\operatorname{Ab}_k(F(X)),
 \qquad e_y\longmapsto\operatorname{ab}_k(r_y),
\]
where $\operatorname{Ab}_k(F)=F/\overline{F^p[F,F]}$ in the
pro-$p$ case.  The family is finite, hence convergent, and the basis
condition holds for the original words of relations.

This condition is the one imposed on the relators in
\cite[Section~4.5, Remark~9]{Mikh2023}.  In that same remark,
\emph{proper} means that inflation $H^1(G,k)\to H^1(F,k)$ is an
isomorphism.  For the full contractible presentation $G=1$, this map
is $0\to k^n$, so the presentation is non-proper whenever $n>0$.
More generally, for a finite subpresentation as in
Lemma~\ref{lem:relator-independence}, its source has dimension
$|X_1|-|Y_1|$ and its target has dimension $|X_1|$; thus it is
non-proper whenever $Y_1$ is nonempty.  The criteria in
\cite[Propositions~15 and~16]{Mikh2023} apply to these non-minimal
presentations with the specified relator basis.
\end{remark}

\begin{remark}\label{r7}
For a finite discrete presentation $(X\mid Y)$, form the rational or
pro-$p$ completion of its free simplicial presentation.  The images
of the given relators are a finite, hence continuous, family of normal
generators.  To apply the presentation theory of \cite{Mikh2023}, one
must additionally verify the specified basis condition of
Definition~\ref{d7}; it does not follow from normal generation alone.
For finite subpresentations of contractible presentations,
Lemma~\ref{lem:relator-independence} supplies the required passage from
the discrete cellular calculation to the completed relator-basis condition.
The free quotient identified below then permits application of the
cohomological asphericity criterion of
\cite[Propositions~15 and~16]{Mikh2023}.
\end{remark}
As in Section \ref{r1.2}, we consider a simplicial prounipotent presentation \eqref{5} as the second step of the Eilenberg-MacLane complex building and denote it by $F^{(1)}_{\bullet}$, we also define the second homotopy group of a prounipotent presentation as $\pi_1(F^{(1)}_{\bullet})$. 
For a connected CW-complex $T$, Kan constructs a free simplicial group
$B_T$ with one non-degenerate generator in degree $n$ for each
$(n+1)$-cell of $T$ and a weak equivalence
$B_T\simeq GS_1T$ \cite[Theorem~6.2]{Kan}.  Thus
\[
 \overline WB_T\simeq \overline WGS_1T\simeq S_1T.
\]
We fix the following model convention for the finite presentations below.
Write $K=K(X_1\mid Y_1)$ for the pointed finite presentation complex and
$P_\bullet=P_\bullet(X_1\mid Y_1)$ for its free simplicial presentation.
Each $P_n$ is a finitely generated free group, and there are no
non-degenerate free generators in degrees greater than one.  The reduced
singular complex $S_1K$ is used only for comparison:
\[
 P_\bullet\simeq G(S_1K),\qquad
 |\overline WP_\bullet|\simeq K.
\]
No degreewise finiteness assertion is made about $S_1K$ or its Kan loop
group.  Finiteness of the CW-complex $K$, finite generation of each $P_n$,
and finite homological type are distinct properties; here the first
ensures the latter two for the chosen presentation model.

For $R=\mathbb Q$ or $\mathbb F_p$, write
$\widehat P_\bullet=(P_\bullet)^{\wedge_R}$ for its dimensionwise
completion.  The dimensionwise completion theorem of Bousfield--Kan
\cite[Proposition~4.1]{BK}, applied to this free presentation model of
finite type, supplies the comparison
\[
 R_\infty K\simeq |\overline W\widehat P_\bullet|.
\]
This use of the completion-model theorem is separate from homotopy
invariance of $R_\infty$: we do not infer that dimensionwise completion
preserves arbitrary weak equivalences of simplicial groups.  Kernels,
Moore complexes and filtrations below are formed on the chosen
presentation model or its explicitly indicated completion.

We record the characteristic-zero facts needed for the finite-dimensional
quotients below.  Our convention for the Zariski closure of a subgroup
of a proalgebraic group is the inverse limit of its Zariski closures in
algebraic quotients, as in \cite[Section~2]{HM2003}.  The equivalence between
unipotent groups and finite-dimensional nilpotent Lie algebras, and its
extension to pro-objects, are recalled in
\cite[Section~3.1]{HainWeight2008}; see also \cite[Section~3]{Hain1993}
and \cite[Propositions~14.32, 14.36 and Theorem~14.37]{MilneAG2017}.

In particular, if $U$ is unipotent and $V\triangleleft U$ is a closed
$k$-subgroup, then $(U/V)(k)=U(k)/V(k)$; this is explicitly recalled in
\cite[Section~7.4, proof of Lemma~7.5]{HM2003} (arXiv version).
For prounipotent groups, the same surjectivity follows by applying
$\exp$ to the surjective quotient map of linearly compact Lie algebras.
Likewise, the image of a morphism of prounipotent groups is closed:
its Lie algebra is the image of a continuous linear map between
linearly compact spaces.  Such an image is closed, since continuous
duality identifies it with the annihilator of the kernel of the dual
map.  For the pro-vector-space conventions, see
\cite[Section~2.1]{HainWeight2008}.

\begin{lemma}[Generation on rational points]\label{lem:unipotent-generation}
Let $U$ be a unipotent algebraic group over a field $k$ of characteristic
zero.  If closed $k$-subgroups $U_i$ generate $U$ as an algebraic group,
then
\[
 U(k)=\langle U_i(k):i\in I\rangle_{\mathrm{abs}}.
\]
\end{lemma}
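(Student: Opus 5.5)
Let $H=\langle U_i(k):i\in I\rangle_{\mathrm{abs}}\subseteq U(k)$ denote the abstract subgroup generated by the rational points. The plan is to pass to Lie algebras via the exponential, show that $H$ is the group of $k$-points of a closed $k$-subgroup, and then use the generation hypothesis to see that this subgroup is $U$.

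\emph{Setting up the correspondence.} Since $\operatorname{char}k=0$ and $U$ is unipotent, $\exp:\mathfrak u\to U$ is an isomorphism of varieties over $k$, where $\mathfrak u=\operatorname{Lie}U$ is finite-dimensional and nilpotent. Closed $k$-subgroups $V\subseteq U$ correspond bijectively to Lie subalgebras $\mathfrak v\subseteq\mathfrak u$ defined over $k$, via $V=\exp\mathfrak v$, and then $V(k)=\exp(\mathfrak v(k))$. The key claim is that for a $k$-Lie subalgebra $\mathfrak h\subseteq\mathfrak u(k)$, the set $\exp\mathfrak h$ is a subgroup. This follows from the Baker--Campbell--Hausdorff formula, which is a finite sum here because $\mathfrak u$ is nilpotent and has rational coefficients. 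Conversely, every subgroup of $U(k)$ of the form $\exp\mathfrak h$ with $\mathfrak h$ a subalgebra is the $k$-points of the closed subgroup $\exp(\mathfrak h\otimes\bar k)$.

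\emph{Showing $\log H$ is a subalgebra.} Let $\mathfrak h$ be the $k$-Lie subalgebra of $\mathfrak u(k)$ generated by the subspaces $\mathfrak u_i(k)=\log U_i(k)$. By BCH, $\exp\mathfrak h$ is a subgroup containing every $U_i(k)$, so $H\subseteq\exp\mathfrak h$. For the reverse inclusion we show that $\log H$ is a $k$-Lie subalgebra containing each $\mathfrak u_i(k)$.
\begin{itemize}
\item \emph{Closure under scalars.} Take $h\in H$. Then $h^n\in H$ for every integer $n$. The map $t\mapsto\exp(t\log h)$ is a one-parameter subgroup that is polynomial in $t$. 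To get arbitrary $t\in k$ one cannot take roots naively, so the argument proceeds by the lower central series. Induct on the nilpotency class of $\mathfrak u$, using the quotient $U\to U/Z$ by the last nonzero term $Z$ of the lower central series. In $U/Z$ the images of the $U_i$ generate, and by induction the image of $H$ is all of $(U/Z)(k)$, which is surjective from $U(k)$. It then remains to show that $Z(k)\subseteq H$.
\item \emph{Closure under brackets.} Here one uses that $Z$ is spanned by iterated brackets of length $c$ (the class) of elements of the $\mathfrak u_i$. Group commutators $[\exp(a_1),[\dots,\exp(a_c)]]$ of elements of the $U_i(k)$ equal $\exp([a_1,[\dots,a_c]])$ exactly, since higher BCH terms lie beyond $\gamma_c$. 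Multilinearity gives scalars: rescale $a_1$ by $t$, which stays inside $\mathfrak u_1(k)$ because $U_i(k)=\exp\mathfrak u_i(k)$ is a full $k$-vector space.
\end{itemize}
Since $Z$ is central and vector-like, products give all of $Z(k)$.

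\emph{Using generation and the main obstacle.} The hypothesis that the $U_i$ generate $U$ algebraically means that the Lie algebras $\mathfrak u_i$ generate $\mathfrak u$: the subgroup they generate is closed with Lie algebra equal to the subalgebra they generate. This also descends to the quotient $U/Z$, as needed for the induction. The main obstacle is precisely the base of the induction, namely showing $\gamma_c\mathfrak u(k)$ lies in $H$. This rests on $\gamma_c\mathfrak u$ being spanned by brackets of elements of the generating subspaces $\mathfrak u_i$, which follows because $\mathfrak u$ is generated as a Lie algebra by $\sum\mathfrak u_i$. That spanning statement holds since $\gamma_c$ of a Lie algebra generated by a subspace $W$ is spanned by length-$c$-and-longer brackets of $W$, and brackets longer than $c$ vanish.
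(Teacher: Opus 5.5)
Your argument is correct and is essentially the paper's: both run along the lower central series of $\mathfrak u$, use that algebraic generation forces the $\mathfrak u_i$ to generate $\mathfrak u$ (the paper only needs that their images span $\mathfrak u/C^2\mathfrak u$), and realize brackets by iterated group commutators of elements of the $U_i(k)$. The difference is only organizational: you induct on the class and use the exact identity $[\exp a_1,[\dots,\exp a_c]]=\exp[a_1,[\dots,a_c]]$ in the central layer $\gamma_c$, whereas the paper proves surjectivity of $H\cap U_r(k)\to(U_r/U_{r+1})(k)$ via leading terms at every layer and then corrects successively.

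In a write-up, drop the opening claim that you will show $\log H$ is a subalgebra, since your bullets do not prove that; they prove $H=U(k)$ directly. Also state explicitly that the image of $U_i(k)$ in $(U/Z)(k)$ is all of $(U_iZ/Z)(k)$, by the exponential correspondence, since this is what lets the induction hypothesis apply.
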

\begin{proof}
Put $H=\langle U_i(k)\rangle_{\mathrm{abs}}$ and
$\mathfrak u=\operatorname{Lie}U$.  Write $C^r\mathfrak u$ for its lower
central series and $U_r=\exp(C^r\mathfrak u)$.
The images of $\operatorname{Lie}U_i$ span
$\mathfrak u/C^2\mathfrak u$: otherwise their inverse image in
$\mathfrak u$ would be a proper Lie subalgebra, whose exponential would
be a proper algebraic subgroup containing every $U_i$.
Under the exponential correspondence, the image of $U_i(k)$ in
$(U/U_2)(k)$ is precisely the corresponding linear subspace.
Consequently $H\to(U/U_2)(k)$ is surjective.

For every $r\geq2$, $C^r\mathfrak u/C^{r+1}\mathfrak u$ is spanned by
length-$r$ brackets from $\mathfrak u/C^2\mathfrak u$.
Choose lifts in $H$ of their entries.  The leading term of the logarithm
of the corresponding iterated group commutator is the Lie bracket.
Scalar coefficients can be absorbed in one entry, whose lift again
exists by surjectivity on the first quotient.  Products add these
leading terms.  It follows that
\[
 H\cap U_r(k)\longrightarrow (U_r/U_{r+1})(k)
\]
is surjective for every $r$.  Starting with an arbitrary element of
$U(k)$, successively correct it by elements of $H$ along this filtration.
The process ends because $\mathfrak u$ is nilpotent, proving $H=U(k)$.
\end{proof}

\begin{theorem}[Field-completion theorem]\label{main}
Let $(X_1\mid Y_1)$ be a finite simplicial subpresentation of a
contractible simplicial presentation $(X\mid Y)$.  Let $Z$ be a set of
cardinality $|X_1|-|Y_1|$.  For $R=\mathbb Q$ or $R=\mathbb F_p$,
\[
 R_\infty\overline W(X_1\mid Y_1)\simeq K(F_R(Z),1).
\]
Here $F_{\mathbb Q}(Z)$ denotes the rational points of the free
prounipotent group over $\mathbb Q$, and $F_{\mathbb F_p}(Z)$ denotes
the free pro-$p$ group.  In particular, the fundamental group is
isomorphic to $F_R(Z)$ and all higher homotopy groups vanish.
\end{theorem}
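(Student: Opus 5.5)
We follow the outline of Section~\ref{intro:strategy}. By the completion-model theorem \cite[Proposition~4.1]{BK}, applied to the finite-type free model $P_\bullet=P_\bullet(X_1\mid Y_1)$,
\[
 R_\infty K\simeq|\overline W\widehat P_\bullet|,\qquad \widehat P_\bullet=(P_\bullet)^{\wedge}_R .
\]
It therefore suffices to prove two things: $\pi_0\widehat P_\bullet\cong F_R(Z)$, and $\pi_n\widehat P_\bullet=0$ for $n>0$. Then $\overline W\widehat P_\bullet$ is a $K(F_R(Z),1)$.

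\emph{Step 1: the component group.} Since completion is a left adjoint, degreewise completion preserves the coequalizer $P_1\rightrightarrows P_0$. Hence $\pi_0\widehat P_\bullet=F/R_1$, where $F=F_R(X_1)$ and $R_1$ is the closed normal subgroup generated by the $r_y$, $y\in Y_1$. By Lemma~\ref{lem:relator-independence}, the $r_y$ extend to a free topological basis $Y_1\sqcup Z$ of $F$. Killing the basis elements indexed by $Y_1$ gives $F/R_1\cong F_R(Z)$, with $|Z|=|X_1|-|Y_1|$. In characteristic zero this is an identification of prounipotent group schemes. On rational points it holds by $(U/V)(k)=U(k)/V(k)$.

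\emph{Step 2: the associated graded.} Put $A_\bullet=\operatorname{Ab}_R(\widehat P_\bullet)$. Since $P_\bullet$ has nondegenerate generators only in degrees $0$ and $1$, its normalized complex is
\[
 R^{Y_1}\to R^{X_1},\qquad e_y\mapsto\operatorname{ab}_R(r_y).
\]
By Lemma~\ref{lem:relator-independence} this map is a split injection with cokernel $R^Z$. By Dold--Kan, $A_\bullet$ is simplicially homotopy equivalent to the constant module $R^Z$, by simplicial maps and homotopies. Now filter $\widehat P_\bullet$ degreewise: by the closed lower central series when $R=\mathbb Q$, and by the Zassenhaus filtration when $R=\mathbb F_p$. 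For free groups the graded pieces are $L^R_s(A_\bullet)$, the free Lie power (restricted in characteristic $p$). This uses Witt's theorem and the identifications recalled in the proof of Proposition~\ref{asnfn}, together with their restricted analogue. A degreewise functor preserves simplicial homotopy equivalences, so $L^R_s(A_\bullet)\simeq L^R_s(R^Z)$ is constant. Hence
\[
 E^1_{n,s}=\pi_nL_s^R(A_\bullet)=0\qquad(n>0).
\]

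\emph{Step 3: convergence, the main obstacle.} Instead of appealing to abstract convergence of the Adams--Quillen spectral sequence, I would argue directly on the tower of quotients $Q_s=\widehat P_\bullet/\Gamma_s$, where $\Gamma_s$ is the $s$-th filtration term. The steps are:
\begin{itemize}
\item Each $Q_{s+1}\to Q_s$ is a degreewise surjection of simplicial groups, hence a Kan fibration, with fiber $\Gamma_s/\Gamma_{s+1}=L_s^R(A_\bullet)$.
\item By induction on $s$, using the long exact sequences and Step~2, $\pi_nQ_s=0$ for $n>0$.
\item In degree zero, $\pi_0Q_{s+1}\to\pi_0Q_s$ is surjective. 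Its kernel is the image of $\pi_0L_s^R(A_\bullet)$, which is the $s$-th graded piece of $F_R(Z)$.
\end{itemize}
Since the filtrations are complete and Hausdorff on the free completed groups, $\widehat P_\bullet=\varprojlim_sQ_s$ is the limit of a tower of fibrations. Milnor's sequence then gives
\[
 0\to{\varprojlim}^1\pi_{n+1}Q_s\to\pi_n\widehat P_\bullet\to\varprojlim\pi_nQ_s\to0 .
\]
For $n>0$ both outer terms vanish. For $n=0$ the tower $\pi_0Q_s=F_R(Z)/\Gamma_s$ is surjective, so $\pi_0\widehat P_\bullet=\varprojlim_s F_R(Z)/\Gamma_s=F_R(Z)$, consistent with Step~1.

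The delicate points are concentrated in this step:
\begin{itemize}
\item checking that the closed filtration quotients are computed degreewise, and that in characteristic zero the quotients on rational points are exact (Lemma~\ref{lem:unipotent-generation} and the surjectivity $(U/V)(k)=U(k)/V(k)$);
\item identifying the Zassenhaus graded pieces with restricted Lie powers of $A_\bullet$ compatibly with faces and degeneracies.
\end{itemize}

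Finally, since $\pi_n\widehat P_\bullet=0$ for $n>0$, the shift $\pi_{n}\overline W\widehat P_\bullet\cong\pi_{n-1}\widehat P_\bullet$ for $n\geq2$ kills all higher homotopy groups of $\overline W\widehat P_\bullet$, and $\pi_1\overline W\widehat P_\bullet=\pi_0\widehat P_\bullet\cong F_R(Z)$, which proves the theorem.
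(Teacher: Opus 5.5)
Your proposal is correct and takes essentially the paper's route: the abelianized differential is split injective by Lemma~\ref{lem:relator-independence}, $\operatorname{gr}_s\widehat P_\bullet\cong L_s^R(A_\bullet)$ is simplicially homotopy equivalent to a constant object, and an induction on the quotient tower is closed off by Milnor's sequence. The one difference is that you get $\pi_1\widehat P_\bullet=0$ from that same tower argument instead of also invoking the cohomological asphericity criterion and the completed Brown--Loday identity; the paper includes these steps, but its own tower argument, which is valid for every $n>0$, already makes them unnecessary.
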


\begin{proof}
We first treat a field $k$ of characteristic zero.  By
Lemma~\ref{lem:relator-independence}, the specified relators extend to a
free topological basis $Y_1\sqcup Z$ of $F_k(X_1)$, and their images form
the canonical basis of the relative abelianization of their closed normal
closure.  Consequently,
\[
 F_k(X_1)/(Y_1)_{F_k(X_1)}\cong F_k(Z).
\]
The quotient has Hochschild cohomological dimension at most one (zero
if $Z$ is empty).  The cohomological asphericity criterion
\cite[Propositions~15 and~16]{Mikh2023}, in its version for non-minimal
presentations with the canonical relator basis, therefore gives
$\pi_2\overline W(X_1\mid Y_1)_k=0$.
For $k=\mathbb F_p$, the same lemma, applied inside the finite-rank free
pro-$p$ group on $X_1$, gives the complementary free basis and the same
conclusion in the mod-$p$ setting.

The criterion gives precisely
\[
 \pi_1(\widehat P_\bullet)
   =\pi_2(\overline W\widehat P_\bullet)=0.
\]
It does not imply that $\widehat P_\bullet$ is contractible, nor does it
by itself imply vanishing of its higher homotopy groups or of the first
homotopy group of its abelianization.  We use the spectral argument below
to establish vanishing of $\pi_n(\widehat P_\bullet)$ for $n\geq2$.
The additional input for that argument is the injectivity of the
abelianized differential, verified separately using
Lemma~\ref{lem:relator-independence}.
The dimensionwise completion model of Bousfield and Kan identifies the
classifying space of the completed simplicial group with the corresponding
completion of the original presentation complex.

We now continue the proof using the lower central series spectral sequence.
Use the fixed model $P_\bullet=P_\bullet(X_1\mid Y_1)$ and its
dimensionwise completion $\widehat P_\bullet$, with
$R=\mathbb Q$ or $R=\mathbb F_p$.  Thus
$\widehat P_\bullet$ is a simplicial free prounipotent group in characteristic zero
and a simplicial free pro-$p$ group in characteristic $p$.  Its augmentation
has image
\[
 \pi_0\widehat P_\bullet\cong F_R(Z),
\]
where $Z$ is the complementary basis obtained above.  By construction,
$\widehat P_n$ is generated by degenerate elements for every $n\geq2$.

To make explicit the role of the low-dimensional criterion, put
\[
 N_2\widehat P=\ker d_0^2\cap\ker d_1^2,
 \qquad K_i=\ker d_i^1\quad(i=0,1).
\]
The prounipotent (respectively pro-$p$) Brown--Loday lemma, in the
presentation setting of \cite[Section~4.3]{Mikh2023}, gives
\begin{equation}\label{eq:completed-brown-loday}
 d_2^2(N_2\widehat P)=\overline{[K_0,K_1]}.
\end{equation}
Here the bar denotes topological closure in the pro-$p$ case and closed
subgroup-scheme closure in the prounipotent case.\footnote{For a topologically finitely generated profinite group $G$
and a closed normal subgroup $H$, Nikolov--Segal prove that the abstract
subgroup $[H,G]$ is closed; in particular, $[G,G]$ is closed.
See N.~Nikolov and D.~Segal, \emph{On normal subgroups of compact groups},
Theorem~1.1, \href{https://arxiv.org/abs/1310.3359}{arXiv:1310.3359}.
Finite generation of $H$ is not required.  This statement does not by
itself establish closedness of $[K_0,K_1]$ for two closed normal subgroups;
we retain the closure in \eqref{eq:completed-brown-loday}.}

We verify the completed identity using common quotients of the
$2$-truncated simplicial object $G=\widehat P_{\leq2}$.
For the simplicial profinite inverse-limit framework, see Quillen
\cite{QUILLEN69}.  We spell out the common-quotient construction here
for both the pro-$p$ and the prounipotent settings.
Choose normal subgroups $U_n\triangleleft G_n$, $0\leq n\leq2$, with
finite quotients in the pro-$p$ case, or closed normal subgroup schemes
with finite-dimensional quotients in characteristic zero.  Set
\[
 V_n=\bigcap_{\substack{0\leq m\leq2\\
              \alpha:G_n\to G_m\ \mathrm{structural}}}
              \alpha^{-1}(U_m).
\]
There are only finitely many structural maps in this truncated simplex
category, including identities.  Thus the quotients $G_n/V_n$ are finite,
respectively finite-dimensional.  Closure under composition gives
$\beta(V_n)\subseteq V_m$ for every structural map
$\beta:G_n\to G_m$.  These are therefore quotients of the whole truncated
simplicial object, not independently chosen quotients of its degrees.
Since $V_n\subseteq U_n$, they form a cofinal system
$q_\lambda:G\twoheadrightarrow G_\lambda$ with
$G\cong\varprojlim_\lambda G_\lambda$.

The images of the degeneracies generate $G_{2,\lambda}$ abstractly:
this is immediate for finite groups and follows from
Lemma~\ref{lem:unipotent-generation} for unipotent groups.  Hence the discrete
Brown--Loday lemma \cite[Lemma~5.7]{BL} applies at every stage.
The kernels also lift: for $\bar x\in\ker d_i\subseteq G_{1,\lambda}$,
choose a lift $x\in G_1$ and replace it by
$x(s_0d_ix)^{-1}$.  The resulting lift belongs to $K_i$, so
$q_\lambda(K_i)=K_{i,\lambda}$.  Consequently, for $z\in N_2G$,
\[
 q_\lambda(d_2z)\in d_2N_2G_\lambda
 =[K_{0,\lambda},K_{1,\lambda}]
 =q_\lambda([K_0,K_1]).
\]
Cofinality now gives $d_2N_2G\subseteq\overline{[K_0,K_1]}$.
Conversely, for $x\in K_0$ and $y\in K_1$, the element
\[
 z(x,y)=[s_1x,s_1y(s_0y)^{-1}]
\]
satisfies $d_0z=d_1z=1$ and $d_2z=[x,y]$.
Thus $[K_0,K_1]\subseteq d_2N_2G$.  The latter image is closed by
compactness in the pro-$p$ case and by the closed-image property recalled before
Lemma~\ref{lem:unipotent-generation} in characteristic zero.  This proves \eqref{eq:completed-brown-loday}, without
asserting abstract generation in the original completed group or
interchanging an image with an inverse limit.

The identity identifies the kernel of the boundary map of the associated crossed
module with
\[
 \ker\!\left(K_0/\overline{[K_0,K_1]}\xrightarrow{\overline d_1}\widehat P_0\right)
 =\frac{K_0\cap K_1}{\overline{[K_0,K_1]}}
 =\pi_1\widehat P_\bullet=\pi_2(\overline W\widehat P_\bullet).
\]
The cohomological criterion applied above makes this kernel zero.
Thus Brown--Loday supplies the identification, and the cohomological
criterion supplies the vanishing.  Neither step by itself establishes
vanishing in degrees $q\geq3$ of the classifying space.

For the higher-degree argument, we next pass to the $R$-abelianization
\[
 A_\bullet=\operatorname{Ab}_R(\widehat P_\bullet),
\]
where $\operatorname{Ab}_{\mathbb Q}(\widehat P)=\widehat P/\overline{[\widehat P,\widehat P]}$ and
$\operatorname{Ab}_{\mathbb F_p}(\widehat P)=\widehat P/\overline{\widehat P^p[\widehat P,\widehat P]}$.  Since
$\widehat P_n$ is generated by degeneracies for $n\geq2$, the simplicial
$R$-module $A_\bullet$ satisfies
\[
 A_n=D_nA\qquad(n\geq2).
\]
Consequently its normalized complex is concentrated in degrees zero and
one:
\[
 N_nA=0\qquad(n\geq2).
\]
At this point degeneracy gives only $\pi_nA=0$ for $n\geq2$.
The group $\pi_1A$ must be considered separately: the already established
vanishing of $\pi_1\widehat P$ does not establish its vanishing.
Although our remaining target is $\pi_n\widehat P$ for $n\geq2$, the
argument that makes the positive-degree $E^1$-terms zero also uses
$\pi_1A=0$.  Indeed, a nonlinear Lie functor can produce higher homotopy
from degree-one homotopy of its input.

Here the hypothesis is specifically that $(X_1\mid Y_1)$ is a finite
subpresentation of a contractible presentation.  We do not assume that
$K(X_1\mid Y_1)$ itself is contractible.  The ambient contractible
presentation supplies the cellular basis calculation in
Lemma~\ref{l.1.3}, and its consequence is
Lemma~\ref{lem:relator-independence}.
The differential $N_1A\to N_0A$ is injective because the images of the
relators $Y_1$ span the direct summand established in
Lemma~\ref{lem:relator-independence}.  Explicitly, $N_1A=R^{Y_1}$,
$N_0A=R^{X_1}$, and this differential is the exponent-sum map
$e_y\mapsto\operatorname{ab}_R(r_y)$.  Its injectivity is proved
before any homotopy-vanishing assertion is used.  Hence, by the Dold--Kan correspondence,
\begin{equation}\label{eq:abelianization-acyclic}
 \pi_nA_\bullet=H_n(NA_\bullet)=0\qquad(n>0).
\end{equation}
This proves the abelianization assertion separately from the
cohomological criterion.  For a presentation without the above
injectivity, the same degeneracy condition alone would not justify
vanishing of the positive-degree $E^1$-terms; additional analysis of
the spectral sequence would be necessary.
Thus $A_\bullet$ is weakly equivalent to the constant simplicial
$R$-module $\operatorname{Ab}_R(F_R(Z))$.

Write $F_s\widehat P_\bullet$ for the dimensionwise closed lower central
series in characteristic zero.  In characteristic $p$, use instead the
Zassenhaus filtration
\[
 F_sG=D_sG=\overline{\prod_{ip^j\geq s}\gamma_i(G)^{p^j}},
 \qquad i\geq1,\ j\geq0.
\]
In both cases $F_1\widehat P=\widehat P$, and the filtration is complete
and separated.  Since every $\widehat P_n$ is free, the associated graded
simplicial object is the free graded Lie object on $A_\bullet$:
\[
 \operatorname{gr}_s\widehat P_\bullet
 \cong L_s^R(A_\bullet),\qquad s\geq1,
\]
where $L_s^{\mathbb Q}$ is the degree-$s$ part of the free Lie algebra and
$L_s^{\mathbb F_p}$ is its restricted analogue.  A weak equivalence of
simplicial $R$-modules is a simplicial homotopy equivalence, and the
dimensionwise functor $L_s^R$ preserves simplicial homotopies.  It follows
from \eqref{eq:abelianization-acyclic} that
\[
 \pi_n\operatorname{gr}_s\widehat P_\bullet
 =\pi_nL_s^R(A_\bullet)=0
 \qquad(n>0,\ s\geq1).
\]

For the simplicial pro-$p$ framework and its associated spectral
sequences, we refer to Quillen, \emph{An application of simplicial
profinite groups} \cite{QUILLEN69}; for the rational setting, see
\cite{Qui5}.  The chosen filtration gives the unstable Adams--Quillen
spectral sequence
\[
 E^1_{n,s}=\pi_n\operatorname{gr}_s\widehat P_\bullet.
\]
We justify passage from this vanishing to the completed group directly
on the quotient tower, making the convergence argument explicit.
Put $Q_s=\widehat P_\bullet/F_s\widehat P_\bullet$, so that $Q_1=1$.
The degreewise surjective maps $Q_{s+1}\to Q_s$ are fibrations of
underlying simplicial groups, with kernels
$F_s\widehat P_\bullet/F_{s+1}\widehat P_\bullet$.
In characteristic zero surjectivity on rational points follows from the
unipotent quotient property.  The long exact sequences of homotopy
groups and the vanishing of $\pi_n\operatorname{gr}_s\widehat P$ for
$n>0$ imply inductively
\[
 \pi_nQ_s=0\qquad(n>0,\ s\geq1).
\]
Completeness identifies $\widehat P_\bullet=\varprojlim_sQ_s$.
Since this is a tower of fibrations, its inverse limit computes its
homotopy inverse limit.  For $n\geq1$, the Milnor exact sequence has
outer terms
\[
 \varprojlim\nolimits_s^1\pi_{n+1}Q_s=0,
 \qquad \varprojlim_s\pi_nQ_s=0.
\]
(The degree-one statement is interpreted in the usual group/pointed-set
form.)  It follows that
\[
 \pi_n\widehat P_\bullet=0\qquad(n>0).
\]
This supplies the required higher-degree vanishing, together with the
degree-one vanishing already obtained from the cohomological criterion.
Thus the vanishing of the positive-degree terms of the spectral sequence
passes to the limit; no appeal to finite type alone as a convergence
criterion is needed.
Consequently
\[
 \widehat P_\bullet\simeq\operatorname{Const}(F_R(Z))
 \quad\text{and}\quad
 \overline W\widehat P_\bullet\simeq K(F_R(Z),1).
\]
The equivalence with a constant simplicial group is not a claim that
$\widehat P_\bullet$ is contractible: its group of components is
$F_R(Z)$ and need not be trivial.
The classifying space of the completed simplicial presentation, and hence
$R_\infty\overline W(X_1\mid Y_1)$, is therefore aspherical.
\end{proof}

\begin{proposition}[Keune comparison in finite type]\label{prop:keune-finite-type}
Let $k$ be a field of characteristic zero, and let
$X_\bullet\to G$ and $Y_\bullet\to H$ be free simplicial
prounipotent resolutions of finite type over $k$.  Here finite type means
that in every simplicial degree the free basis is finite, and the chosen
bases are compatible with degeneracies.  Every morphism $\alpha:G\to H$
extends to a simplicial morphism $X_\bullet\to Y_\bullet$.
Any two such extensions are simplicially homotopic over $\alpha$, with
all components of the homotopy morphisms of prounipotent groups.
In particular, two such resolutions of the same group are simplicially
homotopy equivalent over that group.
\end{proposition}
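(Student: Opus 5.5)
The proof follows the classical comparison theorem for free simplicial resolutions (Keune, Quillen), carried out in the category of prounipotent groups. We construct the extension $f_\bullet:X_\bullet\to Y_\bullet$ by induction on simplicial degree. Write $\epsilon_X:X_0\to G$ and $\epsilon_Y:Y_0\to H$ for the augmentations. In degree zero, $X_0=F_k(S_0)$ with $S_0$ finite, and $\epsilon_Y$ is an epimorphism of prounipotent groups. The projective lifting statement recalled from \cite[Proposition~2.3]{MikhContinuous} therefore produces $f_0$ with $\epsilon_Yf_0=\alpha\epsilon_X$; concretely, we choose preimages of $\alpha\epsilon_X(s)$ on $k$-points, using the unipotent quotient property $(U/V)(k)=U(k)/V(k)$.

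For the inductive step, suppose $f_0,\dots,f_{n-1}$ are constructed and commute with all faces and degeneracies in degrees below $n$. Write the basis of $X_n$ as the disjoint union of the degenerate generators $s_iS_{n-1}$ and the new generators $S_n^{\mathrm{nd}}$; this decomposition exists because the bases are compatible with degeneracies. On degenerate generators, $f_n$ is forced to be $s_if_{n-1}d$. The simplicial identities and the freeness of $X_n$ give a well-defined morphism on the free factor they span. For a nondegenerate generator $x$, we must find $y\in Y_n(k)$ with $d_iy=f_{n-1}(d_ix)$ for all $i$. The tuple $(f_{n-1}d_0x,\dots,f_{n-1}d_nx)$ is a compatible family, that is, an element of the matching object $M_nY$. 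It is a ``full boundary'' in the sense of the outline. Acyclicity of $Y_\bullet$, meaning $\pi_{n-1}Y_\bullet=0$ and exactness in degree zero over $H$, says that the map $Y_n\to M_nY$ to compatible families (over $H$ when $n=1$) is an epimorphism of prounipotent groups. This is the prounipotent form of the Kan condition for simplicial groups. We prove it by the standard Moore-complex argument: first correct the family by degeneracies so that all faces except the last are trivial, then use that the normalized boundary is a cycle and hence a boundary. Since the basis is finite, choosing one lift per nondegenerate generator defines $f_n$ on $F_k(S_n)=F_k(s S_{n-1})\amalg F_k(S_n^{\mathrm{nd}})$ by the coproduct property.

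For homotopy uniqueness, take two extensions $f,g$. A simplicial homotopy is equivalently a morphism $X_\bullet\otimes\Delta[1]\to Y_\bullet$, or in combinatorial form a family $h_i:X_n\to Y_{n+1}$. We build it by the same induction as a relative extension: the cylinder $X_\bullet\otimes\Delta[1]$ is degreewise free of finite type, with $X_\bullet\amalg X_\bullet$ embedded as a free factor spanned by basis elements. It is also free relative to this factor, with new generators $x\otimes\iota$ in each degree. The morphism is prescribed on the two ends by $f$ and $g$, and on degenerate generators by the simplicial identities. It lies over $\alpha$ in degree zero because the constant homotopy on $H$ is used there. At each new generator the required faces again form a compatible family in $M_{n+1}Y$. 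In the lowest degree this family is only compatible over $H$, and it is the relation $\epsilon_Yf_0=\epsilon_Yg_0$ that makes it so. Surjectivity of $Y\to M Y$ then supplies the lift. The final assertion follows by applying existence and uniqueness to $\mathrm{id}_G$ in both directions between the two resolutions, and to the composites.

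The main obstacle is the matching-object surjectivity $Y_{n}(k)\to(M_nY)(k)$ in the prounipotent category. This requires showing that $M_nY$, a finite limit of prounipotent groups, is again prounipotent with the expected $k$-points, and that $Y_n\to M_nY$ is an epimorphism of group schemes and not merely surjective on homotopy. We would address this by passing to the finite-dimensional common quotients of truncations, exactly as in the Brown--Loday argument in the proof of Theorem~\ref{main}. There the statement is the classical one for unipotent groups, which are Kan on $k$-points. We then pass to the limit using that surjections of linearly compact Lie algebras lift compatibly, with Mittag-Leffler holding automatically. Continuity of all choices is harmless because every basis is finite.
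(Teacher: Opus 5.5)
Your proposal is correct and follows the same route as the paper: Keune's inductive construction is carried out inside the prounipotent category, values on degenerate basis elements are prescribed, the finitely many nondegenerate generators are sent to lifts of compatible boundaries supplied by the resolving property of the target, and homotopy uniqueness comes from a relative extension over the simplicial cylinder. The one caveat is your last paragraph: the Moore-complex argument already gives the lifts directly on $k$-points, because the quotient property $(U/V)(k)=U(k)/V(k)$ and closedness of images make the homotopy of $Y_\bullet(k)$ the $k$-points of that of $Y_\bullet$, and $k$-point lifts are all a morphism out of a finite-rank free prounipotent group needs; the proposed detour through finite-dimensional common quotients would in fact fail as stated, since acyclicity is not inherited by such quotients and horn filling alone does not give surjectivity onto the matching object.
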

\begin{proof}
The proof of Keune's comparison theorem
\cite[Theorem~2]{Ke} applies without change in finite type.
Indeed, any assignment of a finite free basis to $k$-points of a
prounipotent group extends uniquely to a morphism of prounipotent groups.
The resolving property of the target provides the required lifts of
compatible boundaries.  In each degree only finitely many basis elements
must be lifted, while the values on degenerate basis elements are already
prescribed.  Thus Keune's inductive constructions of comparison maps and
simplicial homotopies take place in the prounipotent category itself.
Applying the comparison in both directions over the identity, and its
homotopy uniqueness to the composites, proves the final assertion.
For a full continuous lifting proof, including infinite bases compatible
with degeneracies, see \cite[Proposition~3.1 and Theorem~4.2]{MikhContinuous}.
The construction there uses morphisms from free objects to lift complete
compatible boundaries and an explicit simplicial cylinder for homotopies.
\end{proof}

\begin{corollary}\label{cor:prounipotent-model-comparison}
In characteristic zero, the equivalence with the constant simplicial group
in Theorem~\ref{main} can be realized by morphisms of simplicial
prounipotent groups and simplicial homotopies whose components are
prounipotent morphisms.
\end{corollary}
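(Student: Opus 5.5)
The plan is to deduce the corollary from Theorem~\ref{main} together with Proposition~\ref{prop:keune-finite-type}, in that order. The theorem is proved first and independently. Take $k=\mathbb Q$, let $\widehat P_\bullet$ be the dimensionwise prounipotent completion of the fixed finite presentation model $P_\bullet(X_1\mid Y_1)$, and let $\varepsilon:\widehat P_\bullet\to F_{\mathbb Q}(Z)$ be its augmentation.

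The argument runs as follows.

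\begin{enumerate}
\item \emph{$\widehat P_\bullet$ is a resolution.} Theorem~\ref{main} gives $\pi_0\widehat P_\bullet\cong F_{\mathbb Q}(Z)$ and $\pi_n\widehat P_\bullet=0$ for $n>0$. It therefore suffices to check that $\widehat P_\bullet$ is free of finite type in the sense of Proposition~\ref{prop:keune-finite-type}. Each $P_n$ is free on the finite set of its Kan generators, namely iterated degeneracies of $X_1$ and $Y_1$. Completion gives $\widehat P_n=F_{\mathbb Q}(\text{that set})$, and these bases are compatible with degeneracies by construction. So $\varepsilon$ is a free simplicial prounipotent resolution of finite type.

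\item \emph{The constant object is a resolution.} $\operatorname{Const}(F_{\mathbb Q}(Z))$ is free in each degree on $Z$, which is finite. Its degeneracies are identities and hence preserve the basis. It augments isomorphically onto $F_{\mathbb Q}(Z)$, so it is also a free simplicial prounipotent resolution of finite type.

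\item \emph{Comparison.} Apply Proposition~\ref{prop:keune-finite-type} with $\alpha=\operatorname{id}_{F_{\mathbb Q}(Z)}$ in both directions. This produces simplicial morphisms $f:\widehat P_\bullet\to\operatorname{Const}(F_{\mathbb Q}(Z))$ and $g$ in the opposite direction, with every component a morphism of prounipotent groups. The homotopy-uniqueness clause, applied to $fg$ and $gf$ against the identities over $\operatorname{id}$, gives simplicial homotopies whose components are again prounipotent morphisms.
\end{enumerate}

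Concretely, $f$ can be taken to be $\varepsilon$ composed degreewise with the face maps down to degree zero. The map $g$ is induced by the morphism $F_{\mathbb Q}(Z)\to\widehat P_0=F_{\mathbb Q}(X_1)$ sending $Z$ to the complementary basis elements of Lemma~\ref{lem:relator-independence}, followed by iterated degeneracies. With this choice $fg=\operatorname{id}$, so only one homotopy, $gf\simeq\operatorname{id}$, actually has to be constructed.

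The main obstacle is confirming the hypotheses of the comparison proposition. Keune's lifting step uses the resolving property of the target in order to lift compatible full boundaries. For the direction landing in $\widehat P_\bullet$, that property is exactly the vanishing $\pi_n\widehat P_\bullet=0$, including the condition that Moore cycles are boundaries on $k$-points. This is why Theorem~\ref{main} must precede the corollary, and the argument is not circular.

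There is one further point to check. The lifts chosen on rational points must extend to morphisms of prounipotent groups. This holds because any assignment on a finite free basis extends uniquely to a prounipotent morphism, as stated in Proposition~\ref{prop:keune-finite-type}. Surjectivity on rational points of the relevant quotient maps, $(U/V)(k)=U(k)/V(k)$, ensures the required boundary lifts exist on $k$-points. Once these checks are recorded, the corollary follows formally.
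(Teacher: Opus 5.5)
Your proposal is correct and follows the paper's own argument. In both, Theorem~\ref{main} supplies the resolving property of $\widehat P_\bullet\to F_{\mathbb Q}(Z)$, $\operatorname{Const}(F_{\mathbb Q}(Z))$ is a free simplicial prounipotent resolution of finite type, and Proposition~\ref{prop:keune-finite-type} is applied in both directions over $\operatorname{id}$. Your explicit choice of $f$ (the augmentation after face maps) and $g$ (the complementary basis followed by degeneracies), with $fg=\operatorname{id}$, is a valid refinement that the paper does not spell out; it reduces the work to the single Keune homotopy $gf\simeq\operatorname{id}$.
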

\begin{proof}
The completed presentation $\widehat P_\bullet$ has a finite free basis
in every degree, compatible with degeneracies.  The preceding proof
establishes that its augmentation to $F_k(Z)$ is a resolution.
Since $F_k(Z)$ is free of finite rank, $\operatorname{Const}(F_k(Z))$
is also a free simplicial prounipotent resolution of finite type.
Proposition~\ref{prop:keune-finite-type} applies to these two resolutions.
We use the original completed presentation throughout; no additional
free resolution is introduced.
\end{proof}

\section{Subcontractible presentations}\label{s6}

The arithmetic-square argument is applied at finite nilpotent stages.
Only then do we pass to a homotopy inverse limit.  The following
formulation records the required order of operations.
\begin{proposition}[Arithmetic squares along a tower]\label{cx5}
Let $T_s$ be an inverse tower of connected nilpotent finite-type spaces,
and write $D_s$ for the three-corner diagram
\[
 (T_s)_0\longrightarrow\mathbb Q_\infty\!\left(\prod_p(T_s)_p\right)
 \longleftarrow\prod_p(T_s)_p.
\]
Here $(T_s)_0=\mathbb Q_\infty T_s$ and
$(T_s)_p=(\mathbb F_p)_\infty T_s$.  If $T=\operatorname{holim}_sT_s$,
then there is a homotopy pullback square
\begin{equation}\label{as}
\xymatrix{
 T\ar[r]\ar[d] &\operatorname{holim}_s\prod_p(T_s)_p\ar[d]\\
 \operatorname{holim}_s(T_s)_0\ar[r] &
 \operatorname{holim}_s\mathbb Q_\infty(\prod_p(T_s)_p).}
\end{equation}
No identification of the bottom-right corner with
$\mathbb Q_\infty(\operatorname{holim}_s\prod_p(T_s)_p)$ is assumed.
\end{proposition}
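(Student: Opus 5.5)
The plan is to apply Theorem~\ref{space-arithmetic} at each finite stage and then commute the homotopy inverse limit over $s$ with the homotopy pullback, using Proposition~\ref{hofubini}.

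\textbf{Step 1: stagewise equivalences.} Fix $s$. The space $T_s$ is connected, nilpotent and of finite type, so Theorem~\ref{space-arithmetic} shows that the canonical map
\[
 T_s\longrightarrow\operatorname{holim}D_s
\]
is a weak equivalence. The proof of that theorem noted that these comparison maps are induced by the natural maps into the completions. No arithmetic factorizations are chosen, so the maps are natural in $T_s$. Hence the tower maps $T_{s+1}\to T_s$ induce maps of diagrams $D_{s+1}\to D_s$. This gives a diagram $D\colon I_s\times I_{pb}\to SSets$ together with a natural transformation from the constant pullback-shaped diagram on $T_s$ to it.

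\textbf{Step 2: fibrancy.} Apply a functorial objectwise fibrant replacement to $D$; this does not change the homotopy type of any homotopy limit. Because the stagewise comparisons are weak equivalences natural in $s$, homotopy invariance of $\operatorname{holim}_{I_s}$ (the end model, which preserves weak equivalences between objectwise fibrant diagrams) yields
\[
 T=\operatorname{holim}_sT_s\xrightarrow{\ \simeq\ }
 \operatorname{holim}_s\operatorname{holim}_{I_{pb}}D_s .
\]

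\textbf{Step 3: Fubini.} By the first part of Proposition~\ref{hofubini} (the Fubini property, with no filteredness needed),
\[
 \operatorname{holim}_{I_s}\operatorname{holim}_{I_{pb}}D
 \simeq\operatorname{holim}_{I_s\times I_{pb}}D
 \simeq\operatorname{holim}_{I_{pb}}\operatorname{holim}_{I_s}D.
\]
The last term is the homotopy pullback of the three corners
\[
 \operatorname{holim}_s(T_s)_0,\qquad
 \operatorname{holim}_s\mathbb Q_\infty\!\left(\prod_p(T_s)_p\right),\qquad
 \operatorname{holim}_s\prod_p(T_s)_p .
\]
This is exactly square~\eqref{as}, with $T$ in the upper-left corner.

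\textbf{Step 4: commutativity and the main obstacle.} It remains to check that the square commutes and that its maps out of $T$ are the induced ones. Both follow from the naturality in Step~1 and from the naturality of the Fubini equivalences.

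I expect the main obstacle to be precisely this coherence. The maps $\mathbb Q_\infty$ and $(\mathbb F_p)_\infty$ must form genuine strict maps of towers, not just maps up to homotopy, for $D$ to be an honest $I_s\times I_{pb}$-diagram. I would handle this by using the functorial Bousfield--Kan models, where $R_\infty$ is a functor on simplicial sets and $\kappa_p$, $\rho_0$ are natural transformations, so strictness is automatic. If the square is wanted as a strict commutative square, I would use the path-object model $P$ of~\eqref{hp} at each stage instead of the end model; it is functorial in the three-corner diagram, so the stagewise models also assemble into a tower.

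Finally, I would stress that no commutation of $\mathbb Q_\infty$ with $\operatorname{holim}_s$ is used, consistent with the final remark in the statement. The mixed interchange clause of Proposition~\ref{hofubini} is likewise not needed, since only homotopy limits are being interchanged.
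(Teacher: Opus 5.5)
Your proposal is correct and follows essentially the paper's proof. Both use the stagewise weak equivalences $T_s\to\operatorname{holim}_{I_{pb}}D_s$ from Theorem~\ref{space-arithmetic}, natural in $s$, then homotopy invariance of $\operatorname{holim}_s$, then the Fubini interchange of Proposition~\ref{hofubini}. The paper additionally compares the Milnor $\varprojlim^1$ sequences on homotopy groups, but it notes that homotopy invariance (your route) gives the same conclusion, and your Step~4 point on strict naturality matches its remark that the comparison maps involve no choices.
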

\begin{proof}
Put $P_s=\operatorname{holim}_{I_{pb}}D_s$.
Theorem~\ref{space-arithmetic} and
Lemma~\ref{degreewise-pullback} give canonical isomorphisms
$\pi_nT_s\cong\pi_nP_s$ for all $n\geq1$, compatible with every
transition map.  Thus the comparison is an isomorphism of the
homotopy-group towers, not merely an abstract isomorphism at each
stage.

For clarity, this also describes the passage to the inverse limit
on homotopy groups.  After functorial fibrant replacement of the
towers, Milnor's natural exact sequences, for $n\geq2$, are
\[
\begin{split}
0\longrightarrow\varprojlim\nolimits_s^1\pi_{n+1}T_s
&\longrightarrow\pi_n\operatorname{holim}_sT_s
\longrightarrow\varprojlim_s\pi_nT_s\longrightarrow0,\\
0\longrightarrow\varprojlim\nolimits_s^1\pi_{n+1}P_s
&\longrightarrow\pi_n\operatorname{holim}_sP_s
\longrightarrow\varprojlim_s\pi_nP_s\longrightarrow0.
\end{split}
\]
The stagewise natural isomorphisms identify both outer terms and
hence the middle terms.  No vanishing of these derived limits is
needed for this comparison.  The corresponding degree-one sequence
is interpreted as an exact sequence of groups, and components by
the usual nonabelian $\varprojlim^1$ of the fundamental groups.
Equivalently, including all components and base points, homotopy
invariance of homotopy inverse limits applied to the objectwise
weak equivalence $T_s\to P_s$ gives
$\operatorname{holim}_sT_s\simeq\operatorname{holim}_sP_s$.

Finally, Proposition~\ref{hofubini} gives
\[
 T\simeq\operatorname{holim}_s\operatorname{holim}_{I_{pb}}D_s
 \simeq\operatorname{holim}_{I_{pb}}\operatorname{holim}_sD_s,
\]
which is precisely \eqref{as}.  This uses commutation of two homotopy
limits, not commutation of a homotopy limit with rationalization.
\end{proof}

\begin{proposition}[Noncommutative arithmetic square for free simplicial groups of finite type]
\label{general-free-arithmetic}
Let $P_\bullet$ be an arbitrary free discrete simplicial group of
finite type: $P_n$ is free of finite rank for every $n$.  No nilpotency
assumption is imposed on $P_\bullet$ or on $\pi_0P_\bullet$.
Put $X=\overline WP_\bullet$.  All the following constructions are
performed degreewise.  Put
\[
 N_{s,\bullet}=P_\bullet/\gamma_sP_\bullet,\qquad
 \widehat P_{\mathbb Q,\bullet}=\varprojlim_s(N_{s,\bullet})_0,
 \qquad
 \widehat P_{p,\bullet}=\varprojlim_s(N_{s,\bullet})_p^{\wedge}.
\]
These are respectively the rational prounipotent and pro-$p$
completions of the original discrete free simplicial group.  Define
its adelic simplicial group by
\[
 \mathcal A(P)_\bullet=
 \varprojlim_s\varinjlim_{S\subset\pi,\ S\text{ finite}}
 \left(
 \prod_{p\in S}((N_{s,\bullet})_p^{\wedge})_0
 \times\prod_{p\notin S}(N_{s,\bullet})_p^{\wedge}
 \right).
\]
There are natural weak equivalences
\begin{equation}\label{general-group-reconstruction}
 \widehat P_{\mathbb Z,\bullet}:=
 \varprojlim_sN_{s,\bullet}\ \simeq\
 \operatorname{holim}\left(
 \widehat P_{\mathbb Q,\bullet}\longrightarrow\mathcal A(P)_\bullet
 \longleftarrow\prod_p\widehat P_{p,\bullet}\right)
\end{equation}
in simplicial groups, and consequently
\begin{equation}\label{general-space-reconstruction}
 \mathbb Z_\infty X\simeq
 \operatorname{holim}\left(
 \overline W\widehat P_{\mathbb Q,\bullet}
 \longrightarrow\overline W\mathcal A(P)_\bullet
 \longleftarrow\overline W\!\left(\prod_p\widehat P_{p,\bullet}\right)
 \right).
\end{equation}
The homotopy limit in these formulas is over the three-object cospan.
In particular, its three entries are explicit simplicial groups, or
 their classifying spaces, formed from the same $P_\bullet$.
The order $\varprojlim_s\varinjlim_S$ in the adelic entry is retained.
No asphericity assumption on $X$ is required.  If $X\to\mathbb Z_\infty X$
is a weak equivalence, the left side of
\eqref{general-space-reconstruction} can be replaced by $X$.
\end{proposition}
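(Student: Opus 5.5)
The plan is to prove the group-level statement \eqref{general-group-reconstruction} stagewise and then pass to the inverse limit over $s$; the space-level statement \eqref{general-space-reconstruction} then follows by applying $\overline W$.

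\textbf{Step 1 (one nilpotent stage, one simplicial degree).} Fix $s$ and a degree $n$. The group $N_{s,n}=P_n/\gamma_sP_n$ is a finitely generated free nilpotent group. Proposition~\ref{asnfn}(2),(3) identifies $\bigl(\prod_p(N_{s,n})_p^{\wedge}\bigr)_0$ with the restricted product
\[
A_{s,n}=\varinjlim_S\Bigl(\prod_{p\in S}((N_{s,n})_p^{\wedge})_0\times\prod_{p\notin S}(N_{s,n})_p^{\wedge}\Bigr).
\]
It also shows that $N_{s,n}$ is the strict pullback of $(N_{s,n})_0\to A_{s,n}\leftarrow\prod_p(N_{s,n})_p^{\wedge}$. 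By Theorem~\ref{asn}, every element of $A_{s,n}$ factors as $vw$ with $v$ in the image of $(N_{s,n})_0$ and $w$ in the image of $\prod_p(N_{s,n})_p^{\wedge}$.

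All these constructions are functorial in group homomorphisms, so they assemble into simplicial groups. This yields a strict pullback square of simplicial groups
\[
N_{s,\bullet}=(N_{s,\bullet})_0\times_{A_{s,\bullet}}\prod_p(N_{s,\bullet})_p^{\wedge}.
\]

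\textbf{Step 2 (strict pullback is a homotopy pullback).} I would show that the map $(N_{s,\bullet})_0\times\prod_p(N_{s,\bullet})_p^{\wedge}\to A_{s,\bullet}$, $(v,w)\mapsto \delta(v)\kappa_0(w)^{-1}$, is degreewise surjective by the factorization above. A homomorphism of simplicial groups that is degreewise surjective is a Kan fibration, and any map of simplicial groups into a simplicial group is a map between Kan complexes. Hence the strict fibre product computes the homotopy pullback; this is the simplicial-group analogue of the path-object model in Corollary~\ref{dr}. As a consistency check, Lemma~\ref{degreewise-pullback} together with the short exact sequences of homotopy groups gives the same answer on $\pi_*$. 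The equivalence is natural in the transition maps $N_{s+1,\bullet}\to N_{s,\bullet}$.

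\textbf{Step 3 (passage to the limit in $s$).} The four towers $N_{s,\bullet}$, $(N_{s,\bullet})_0$, $\prod_p(N_{s,\bullet})_p^{\wedge}$ and $A_{s,\bullet}$ all have degreewise surjective transition maps. For the Malcev and pro-$p$ completions this holds because the free nilpotent quotient maps are surjective and completion preserves surjections of finitely generated nilpotent groups; for $A_{s,\bullet}$ it holds coordinatewise, because the filtered colimit over $S$ preserves surjections. These towers are therefore towers of fibrations, so $\varprojlim_s$ computes $\operatorname{holim}_s$ in each case.

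The inverse limit of the stagewise strict pullbacks is the strict pullback of the limits, since limits commute. The limits are exactly $\widehat P_{\mathbb Z,\bullet}$, $\widehat P_{\mathbb Q,\bullet}$, $\prod_p\widehat P_{p,\bullet}$ and $\mathcal A(P)_\bullet$, where $\varprojlim_s\prod_p=\prod_p\varprojlim_s$. Proposition~\ref{hofubini} (Fubini for holims), together with the stagewise homotopy pullbacks of Step~2, then gives \eqref{general-group-reconstruction}. This step keeps the order $\varprojlim_s\varinjlim_S$ and never interchanges the colimit over $S$ with the tower.

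The main obstacle is that the map $\widehat P_{\mathbb Q,\bullet}\times\prod_p\widehat P_{p,\bullet}\to\mathcal A(P)_\bullet$ need not remain surjective in the limit, because the exceptional set $S$ may grow with $s$. Fortunately, the argument above only needs surjectivity at each finite stage, and the limit is handled by holim over fibration towers.

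\textbf{Step 4 (the space-level statement).} Apply $\overline W$, which preserves weak equivalences, fibrations and (homotopy) limits of simplicial groups, to obtain the cospan in \eqref{general-space-reconstruction}. For the left side, Bousfield--Kan's description of $\mathbb Z_\infty$ for a free simplicial group of finite type \cite[Proposition~4.1]{BK} gives $\mathbb Z_\infty X\simeq\overline W\varprojlim_s P_\bullet/\gamma_sP_\bullet$, that is $\mathbb Z_\infty X\simeq\overline W\widehat P_{\mathbb Z,\bullet}$. The final remark about replacing $\mathbb Z_\infty X$ by $X$ when $X\to\mathbb Z_\infty X$ is a weak equivalence is then immediate.
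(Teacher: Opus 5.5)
Your overall route matches the paper's: the stagewise square from Proposition~\ref{asnfn} with the factorization of Theorem~\ref{asn}, then surjective towers so that $\varprojlim_s$ computes $\operatorname{holim}_s$, then Fubini, and finally $\overline W$ together with the lower-central tower model for $\mathbb Z_\infty$. The gap is in Step~2. The map $\mu(v,w)=\delta(v)\kappa_0(w)^{-1}$ is not a homomorphism once $A_{s,n}$ is nonabelian (already for $s\geq3$ and $\operatorname{rank}P_n\geq2$), because $\mu(vv',ww')=\delta(v)\delta(v')\kappa_0(w')^{-1}\kappa_0(w)^{-1}$. So the lemma that degreewise surjective homomorphisms of simplicial groups are Kan fibrations does not apply, and this is exactly the noncommutative case the proposition is about. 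Nothing else in your argument makes $\mu$, $\delta$ or $\kappa_0$ a fibration, so the passage from strict to homotopy pullback is unjustified as written. The $\pi_*$ ``consistency check'' cannot replace it, since it presupposes the homotopy groups of the strict pullback.

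The conclusion is true, and the repair uses only the input you already have. Regard $\mu$ as the orbit map at $1$ of the simplicial left action $(v,w)\cdot a=\delta(v)\,a\,\kappa_0(w)^{-1}$ of $G=(N_{s,\bullet})_0\times\prod_p(N_{s,\bullet})_p^{\wedge}$ on $A_{s,\bullet}$. The factorization says this action is transitive in each degree. The stabilizer of $1$ is the strict pullback $H\cong N_{s,\bullet}$, so $\mu$ identifies $A_{s,\bullet}$ with the coset space $G/H$. For any simplicial subgroup $H\subseteq G$ the projection $G\to G/H$ is a Kan fibration. To see this, take a horn $(x_i)_{i\neq k}$ in $G$ and a simplex $\bar y$ over its image, and lift $\bar y$ arbitrarily to $y'$. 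Then $d_iy'=x_ih_i$, where $(h_i)_{i\neq k}$ is a horn in $H$; if $h$ fills it, $y'h^{-1}$ is the required lift. Hence $N_{s,\bullet}=\mu^{-1}(1)$ is the homotopy fiber of $\mu$. Right translation of paths by $\kappa_0(w)^{-1}$ identifies this homotopy fiber with the path-object homotopy pullback, naturally in $s$.

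This is the same mechanism as the paper's proof, which phrases it as the equivalence between the stabilizer and the action groupoid in a two-sided bar construction on classifying spaces, followed by the derived loop functor. Once repaired, your version stays inside simplicial groups and is slightly more direct.

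Incidentally, the obstacle you anticipate in Step~3 does not occur. Over a compatible family $(a_s)$, the fibers of $\mu_s$ are cosets of $N_{s,n}$, and the transition maps between them are surjective because $N_{s+1,n}\to N_{s,n}$ is. So the limiting map onto $\mathcal A(P)_n$ is surjective, with stabilizer $\widehat P_{\mathbb Z,n}$, and the coset argument could be applied directly to the limit square.
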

\begin{proof}
Nilpotent groups enter only as intermediate quotients: in each degree
$n$, the group $N_{s,n}=P_n/\gamma_sP_n$ is finitely generated free
nilpotent.  Proposition~\ref{asnfn} supplies its arithmetic square,
and the factorization in Theorem~\ref{asn} makes the corresponding
square of group nerves a homotopy pullback.  For clarity, this passage can be made with the two-sided bar model:
the rational and pro-$p$ product groups act on the adelic group by
left and right multiplication.  The factorization makes this action
transitive in every degree, and the stabilizer of the identity is
$N_{s,n}$.  The inclusion of this stabilizer into the action groupoid
is therefore an equivalence in every degree, naturally in the
simplicial operators.  Its diagonal is a weak equivalence.  The
two-sided bar construction models the homotopy pullback of the three
classifying spaces.  This proves the required simplicial homotopy
pullback assertion; applying the derived loop functor gives the
corresponding assertion for simplicial groups.

Now take the homotopy inverse limit in $s$ and commute the two homotopy
limits by Proposition~\ref{hofubini}.  Every tower involved has
surjective maps in each simplicial degree and hence consists of
fibrations of simplicial groups.  For the adelic tower, lift each
component while keeping the same finite set $S$; this proves its
surjectivity as well.  Thus these homotopy inverse limits are computed
by ordinary inverse limits.  The filtered colimit defining the adelic
entry is also computed degreewise; on group nerves its injective
transition maps are cofibrations.  Finally, ordinary inverse limits
commute with products.  These observations prove
\eqref{general-group-reconstruction}.  Apply the derived classifying-space
functor and the lower-central tower model for integral completion
\cite[Chapter~IV]{BK} to obtain \eqref{general-space-reconstruction}.
\end{proof}

\subsection{The adelic limit and rational pronilpotent completion}
\label{adelic-explicit}
We give a concrete description of the adelic entry and distinguish it
from an interchange of an inverse limit and a filtered colimit.  In this
subsection we index the nilpotent quotients by their class: for a free
group $F$ of finite rank put
\[
 G_s=F/\gamma_{s+1}F,\quad N_{s,p}=(G_s)_p^{\wedge},\quad
 U_{s,p}=(N_{s,p})_0,\quad N_s=\prod_pN_{s,p}.
\]
This shifts the index by one relative to
Proposition~\ref{general-free-arithmetic}.  By Proposition~\ref{asnfn}
we may regard $N_{s,p}$ as a subgroup of $U_{s,p}$ and set
\[
 A_{s,S}=\prod_{p\in S}U_{s,p}\times\prod_{p\notin S}N_{s,p},
 \qquad A_s=\bigcup_{S\text{ finite}}A_{s,S}=(N_s)_0.
\]
Let
\[
 N_p=F_p^{\wedge}=\varprojlim_sN_{s,p},\qquad
 U_p=\varprojlim_sU_{s,p},\qquad N=\prod_pN_p.
\]
All embeddings and projections here are the natural ones.

\begin{proposition}[Explicit description of the adelic limit]
\label{adelic-completion-description}
There is a natural group isomorphism
\begin{equation}\label{adelic-coordinate-description}
 \mathcal A(F):=\varprojlim_s A_s
 \cong\left\{(u_p)_p\in\prod_pU_p:\ 
 \substack{\text{for each }s,\ u_p\bmod s\in N_{s,p}\\
 \text{for all but finitely many }p}\right\}.
\end{equation}
Here $u_p\bmod s$ denotes the projection to $U_{s,p}$.
Let
\[
 \mathcal B(F)=\bigcup_{S\text{ finite}}
 \left(\prod_{p\in S}U_p\times\prod_{p\notin S}N_p\right),
 \qquad K_s=\ker(\mathcal B(F)\longrightarrow A_s).
\]
Then $\mathcal B(F)\to A_s$ is surjective for every $s$, and
\begin{equation}\label{adelic-restricted-completion}
 \mathcal B(F)/K_s\cong A_s,\qquad
 \mathcal A(F)\cong\varprojlim_s\mathcal B(F)/K_s.
\end{equation}
Thus $\mathcal A(F)$ is the separated completion of the restricted
product $\mathcal B(F)$ for this filtration.
Moreover, if $\gamma_jN$ denotes the \emph{abstract} lower central series,
then
\begin{equation}\label{adelic-rational-completion}
 \mathcal A(F)\cong\widehat N_{\mathbb Q}^{\mathrm{nil}}
 :=\varprojlim_s(N/\gamma_{s+1}N)_0.
\end{equation}
These identifications are natural in homomorphisms between finite-rank
free groups, and hence apply degreewise to the free simplicial groups
in Proposition~\ref{general-free-arithmetic}.
\end{proposition}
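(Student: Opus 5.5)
The plan is to prove the three identifications in order, working throughout inside the coordinatewise products $\prod_pU_{s,p}$ and $\prod_pU_p$. Everything needed at each finite class $s$ comes from Proposition~\ref{asnfn}. Part~(4) of that proposition makes $N_{s,p}\to U_{s,p}$ injective, since $G_s$ is free nilpotent, so $N_{s,p}$ really is a subgroup of $U_{s,p}$. Part~(2) identifies $A_s=(N_s)_0$ with the restricted product $\prod'_p(U_{s,p},N_{s,p})$, viewed as a subgroup of $\prod_pU_{s,p}$. The transition maps $A_{s+1}\to A_s$ are then the coordinatewise projections $U_{s+1,p}\to U_{s,p}$, which carry $N_{s+1,p}$ into $N_{s,p}$.

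For \eqref{adelic-coordinate-description}, take a compatible family $(a_s)_s\in\varprojlim_sA_s$. Its $p$-coordinates form compatible families in $U_{s,p}$, hence define $u_p\in U_p=\varprojlim_sU_{s,p}$. Membership $a_s\in A_{s,S_s}$ for some finite $S_s$ is exactly the stated condition: for each $s$, $u_p\bmod s\in N_{s,p}$ for all but finitely many $p$. Conversely, any such tuple defines compatible elements $a_s\in A_s$. Injectivity of the resulting map is clear from the coordinatewise description. The exceptional set $S_s$ is allowed to grow with $s$, which is precisely why the order $\varprojlim_s\varinjlim_S$ matters.

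For \eqref{adelic-restricted-completion}, let $a\in A_{s,S}$. For $p\notin S$, lift its coordinate from $N_{s,p}$ to $N_p$; this is possible because $N_p\to N_{s,p}$ is the surjective quotient by a closed lower-central term. For $p\in S$, lift from $U_{s,p}$ to $U_p$, using surjectivity of unipotent quotients on rational points as recalled before Lemma~\ref{lem:unipotent-generation}. The lifted tuple lies in $\prod_{p\in S}U_p\times\prod_{p\notin S}N_p\subseteq\mathcal B(F)$, so $\mathcal B(F)\to A_s$ is surjective. This gives $\mathcal B(F)/K_s\cong A_s$, and passing to the limit gives $\mathcal A(F)\cong\varprojlim_s\mathcal B(F)/K_s$, compatibly with the maps in $s$. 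Since $\bigcap_sK_s=1$ by the coordinate description, this is the separated completion of $\mathcal B(F)$.

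For \eqref{adelic-rational-completion}, the plan is to prove that the natural map $N/\gamma_{s+1}N\to N_s=\prod_pN_{s,p}$ is an isomorphism. Rationalizing it and applying Proposition~\ref{asnfn}(2) then gives $(N/\gamma_{s+1}N)_0\cong A_s$, and taking $\varprojlim_s$ finishes. Surjectivity holds because $N\to N_s$ is a product of surjections. The kernel is $\prod_p\gamma_{s+1}N_p$, since each $\gamma_{s+1}N_p$ is closed by Nikolov--Segal. The claim therefore reduces to showing $\gamma_{s+1}N=\prod_p\gamma_{s+1}N_p$, where $\gamma_{s+1}N$ is the \emph{abstract} lower central term. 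The inclusion $\subseteq$ is clear, but the reverse inclusion is the main obstacle. An element of the product has coordinates that are products of $(s+1)$-fold commutators, and an infinite product of these is not a priori a finite product of commutators in $N$. What I would try is a uniform width bound: every element of $\gamma_{s+1}$ of a $d$-generator pro-$p$ group should be a product of at most $C(d,s)$ basic $(s+1)$-fold commutators of the form $[g_1,\dots,g_{s+1}]$, with $C(d,s)$ independent of $p$. One route is to use the Hall-basis coordinates of $\operatorname{gr}_\gamma$ from the proof of Proposition~\ref{asnfn}(4) and induct down the lower central series, correcting one $\mathbb Z_p$-linear weight layer at a time with a bounded number of commutators. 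Once the bound is established, the commutators can be chosen coordinatewise and assembled into $C(d,s)$ commutators in $N$, which proves the reverse inclusion.

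Naturality in homomorphisms of finite-rank free groups holds because every construction above ($G_s$, pro-$p$ completion, rationalization, restricted products, and limits) is functorial. This gives the degreewise application to Proposition~\ref{general-free-arithmetic}.
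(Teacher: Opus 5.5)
Your proofs of the coordinate description, of the surjectivity of $\mathcal B(F)\to A_s$, and of the separated-completion statement match the paper. So does your reduction of the last isomorphism to the equality $\gamma_{s+1}N=\prod_p\gamma_{s+1}N_p$.

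For the reverse inclusion you depart from the paper. The paper shows that $N$ is topologically generated by the diagonal images of a basis of $F$. It then applies Nikolov--Segal to $N$ to conclude that the abstract $\gamma_{s+1}N$ is closed. Finally it uses that $\gamma_{s+1}N$ contains the finite-support products of the $\gamma_{s+1}N_p$, which are dense in $\prod_p\gamma_{s+1}N_p$.

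Your alternative is a commutator-width bound uniform in $p$, followed by coordinatewise assembly. That is a legitimate strategy, and the assembly step is correct. However, the bound is only conjectured, and the mechanism you propose for it does not give one. Below $\gamma_{s+1}N_p$ there are infinitely many weight layers. If each layer is corrected by its own Hall basic commutators, the number of commutators grows with the depth. In $N_p$ you then obtain only a convergent infinite product, that is, membership in $\overline{\gamma_{s+1}N_p}$, which you already had. Assembled coordinatewise, such products land in $\overline{\gamma_{s+1}N}$, not in the abstract $\gamma_{s+1}N$ that the statement concerns. As written, the one nontrivial step of the proposition is left open.

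The missing idea is to absorb each correction into a fixed set of commutator slots rather than appending new ones. Take a nilpotent group $G=\langle x_1,\dots,x_d\rangle$ of class $c$ and use $[a,b]=a^{-1}b^{-1}ab$. By induction on $c$ one shows $\gamma_{j+1}G=[\gamma_jG,x_1]\cdots[\gamma_jG,x_d]$ as a product of sets; the case $j\geq c$ is trivial. The induction step uses three facts:
\begin{itemize}
\item Modulo $\gamma_cG$ the claim is the inductive hypothesis.
\item $\gamma_cG$ is central and equals the product of the subgroups $\{[y,x_i]:y\in\gamma_{c-1}G\}$.
\item $[h,x_i][y,x_i]=[hy,x_i]$ for $h\in\gamma_jG$ and $y\in\gamma_{c-1}G$, because $[[h,x_i],y]\in\gamma_{c+1}G=1$.
\end{itemize}
Expanding $[c_1\cdots c_m,x]$ into conjugates $[c_k^{b},x^{b}]$ and iterating gives width at most $d^s$ for simple $(s+1)$-fold commutators. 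This bound is independent of $p$ and of the class.

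In $N_p$, the set of such products is compact and surjects onto $\gamma_{s+1}$ of every finite quotient. Hence it equals $\overline{\gamma_{s+1}N_p}$, and therefore $\gamma_{s+1}N_p$; this also replaces your appeal to Nikolov--Segal.

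With this lemma your route is complete and more elementary than the paper's. It needs neither the Nikolov--Segal theorem nor topological finite generation of $N$, only a uniform number of generators in the factors. Conversely, the same compactness argument applied to the pronilpotent group $N$ would let the paper's proof dispense with Nikolov--Segal as well.
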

\begin{proof}
A compatible family $a_s\in A_s$ determines, for each prime $p$, a
compatible family of $p$-coordinates and hence $u_p\in U_p$.
Membership in $A_s$ is exactly the finiteness condition in
\eqref{adelic-coordinate-description}.  Conversely, that condition
allows a tuple $(u_p)_p$ to be projected to $A_s$ on every level.
These two constructions are inverse homomorphisms.

For fixed $S$, inverse limits commute with products, giving
\[
 \varprojlim_sA_{s,S}
 =\prod_{p\in S}U_p\times\prod_{p\notin S}N_p.
\]
To lift $a_s\in A_{s,S}$ to $\mathcal B(F)$, lift its finitely many
coordinates in $U_{s,p}$ to $U_p$, and all remaining coordinates in
$N_{s,p}$ to $N_p$.  These projections are surjective, as their
transition towers consist of surjections.  The lifts retain the same
finite set $S$.  The first isomorphism of
\eqref{adelic-restricted-completion} follows, and then so does the
second.  The intersection of the $K_s$ is trivial by the coordinate
description, so this is a separated completion.

To prove \eqref{adelic-rational-completion}, first observe that $N$ is
a topologically finitely generated profinite group.  If $x_1,\ldots,x_r$
is a basis of $F$, their diagonal images topologically generate $N$:
every finite continuous quotient is a product of finite $p$-groups for
finitely many distinct primes, and the images of these generators
surject onto every factor.  A subdirect subgroup of a product of finite
groups of pairwise coprime orders is the whole product.

By the theorem of Nikolov--Segal, every term of the abstract lower
central series of a finitely generated profinite group is closed
\cite{NS2007}.  In particular, all $\gamma_jN$ and $\gamma_jN_p$ are
closed.  Coordinate projections give one inclusion in
\[
 \gamma_jN=\prod_p\gamma_jN_p.
\]
For the reverse inclusion, $\gamma_jN$ contains $\gamma_jN_p$ supported
in each individual coordinate, and therefore contains all finite-support
products of these subgroups.  They are dense in the displayed product;
closedness gives the reverse inclusion.

For a free pro-$p$ group, quotienting by its closed lower central term
is the pro-$p$ completion of the corresponding free nilpotent group.
This also follows directly from their common universal property for
maps to finite $p$-groups of class at most $s$.  Consequently,
\[
 N/\gamma_{s+1}N
 \cong\prod_p(N_p/\gamma_{s+1}N_p)
 \cong\prod_pN_{s,p}=N_s.
\]
Proposition~\ref{asnfn} now identifies the rationalization of this
quotient with $A_s$.  Taking inverse limits proves
\eqref{adelic-rational-completion}.  All maps used are induced by
projections, inclusions and rationalization, which also proves naturality.
\end{proof}

\begin{proposition}[Failure of the interchange even in rank two]
\label{adelic-noninterchange}
For $F=F(x,y)$, the canonical map
\[
 \iota:\varinjlim_S\varprojlim_sA_{s,S}
       \longrightarrow\varprojlim_s\varinjlim_SA_{s,S}
\]
is injective but not surjective.  Under the preceding identifications,
it is the inclusion $\mathcal B(F)\hookrightarrow\mathcal A(F)$.
\end{proposition}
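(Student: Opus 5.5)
We identify $\iota$ with the inclusion $\mathcal B(F)\hookrightarrow\mathcal A(F)$ and then exhibit an element of $\mathcal A(F)$ that lies in no single $\prod_{p\in S}U_p\times\prod_{p\notin S}N_p$.

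\emph{Identification and injectivity.} For fixed finite $S$, inverse limits commute with products, as in the proof of Proposition~\ref{adelic-completion-description}. This gives $\varprojlim_sA_{s,S}=\prod_{p\in S}U_p\times\prod_{p\notin S}N_p$. The transition maps in $S$ are the coordinatewise inclusions $N_p\subseteq U_p$. These are injective because each $N_{s,p}\to U_{s,p}$ is injective by Proposition~\ref{asnfn}(4) (free nilpotent case), and injectivity passes to the limit in $s$. Hence the filtered colimit is the union $\mathcal B(F)\subseteq\prod_pU_p$. The target is $\mathcal A(F)$, realized inside $\prod_pU_p$ by \eqref{adelic-coordinate-description}. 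Both maps to $\prod_pU_p$ are coordinate projections, so $\iota$ is the inclusion, and injectivity follows since both sides embed compatibly in $\prod_pU_p$.

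\emph{Non-surjectivity.} Enumerate the primes $p_1<p_2<\cdots$. We use the Malcev/logarithm coordinates: $\log$ identifies $U_p$ with the completed free Lie algebra $\widehat L_{\mathbb Q_p}(x,y)$, and $U_{s,p}$ with its truncation at weight $\le s$. We need an integrality criterion for $N_p$ and $N_{s,p}$, and the one we plan to use is the Hall-coordinate description from the proof of Proposition~\ref{asnfn}(4). In Hall basic-commutator coordinates, $N_{s,p}$ consists of the tuples with all coordinates in $\mathbb Z_p$, and $U_{s,p}$ of those with coordinates in $\mathbb Q_p$. Choose a sequence $w_m$ of Hall basic commutators with strictly increasing weight $m\ge 1$, for instance $w_1=x$ and $w_m=[\cdots[y,x],\dots,x]$ of weight $m$. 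In the $p_m$ coordinate, set
\[
 u_{p_m}=\prod (\text{Hall element } w_m)^{1/p_m},
\]
that is, the element whose only nonzero Hall coordinate is $p_m^{-1}$ on $w_m$. This exists in $U_{p_m}$, and its image in $U_{s,p_m}$ is trivial once $s<m$.

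For each fixed $s$, only the finitely many $m\le s$ give coordinates $u_p\bmod s$ outside $N_{s,p}$, so $(u_p)_p\in\mathcal A(F)$ by \eqref{adelic-coordinate-description}. On the other hand, $u_{p_m}\notin N_{p_m}$ for every $m$, because its image modulo weight $m$ has a non-integral Hall coordinate. So no finite $S$ contains all exceptional primes, and the tuple is not in $\mathcal B(F)$.

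\emph{Main obstacle.} The main difficulty is making the coordinate criterion rigorous: an element of $U_{s,p}$ lies in $N_{s,p}$ if and only if its Hall (Malcev) coordinates lie in $\mathbb Z_p$, and the images of the chosen element modulo lower weights are as claimed. To avoid depending on the Hall-coordinate multiplication formula, we would instead take $u_{p_m}$ to be a group commutator $[\cdots[y,x],\dots,x^{1/p_m}]$ of weight $m$, which lies in $\gamma_mU_{p_m}$. Its image is then trivial in $U_{s,p}$ for $s<m$. In the weight-$m$ quotient $\gamma_m/\gamma_{m+1}\cong L_{m}\otimes\mathbb Q_p$ it equals $p_m^{-1}$ times a basis vector of $L_{\mathbb Z,m}$, which is not in $L_{\mathbb Z,m}\otimes\mathbb Z_p$. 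The graded identifications at the end of the proof of Proposition~\ref{asnfn} then show $u_{p_m}\bmod m\notin N_{m,p_m}$.
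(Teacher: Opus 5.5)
Your proposal is correct and follows essentially the same route as the paper. You get injectivity from the injectivity of $N_{s,p}\hookrightarrow U_{s,p}$, phrased as a compatible embedding of both sides into $\prod_pU_p$, where the paper compares representatives at the union of their finite sets of primes. For non-surjectivity you use the compatible family whose $p_m$-coordinate is $w_m^{1/p_m}$ for Hall commutators of increasing weight, which is exactly the paper's $c_j^{1/p_j}$. The non-integrality step is settled in the paper just as in your fallback: the class-$m$ image is trivial in class $m-1$, so injectivity of $N_{m-1,p}\to U_{m-1,p}$ would force any element of $N_{m,p_m}$ with this image into the central layer $L_{\mathbb Z,m}\otimes\mathbb Z_{p_m}$, which contradicts the coordinate $1/p_m$.
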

\begin{proof}
The map sends the class of a compatible family $(a_s)_s$ represented
at a fixed finite $S$ to the same family in $\varprojlim_sA_s$.
Two representatives can be compared at the union of their finite sets
of primes.  Since $A_{s,S}\to A_s$ is injective for every $s$, equality
of their images implies equality of the representatives.  Thus $\iota$
is injective.

Choose distinct primes $p_j$ for $j\geq2$, and for each $j$ choose a
basic Hall commutator $c_j\in F$ of weight $j$.  Define $a_s$ by
\[
 (a_s)_{p_j}=(c_j\bmod\gamma_{s+1}F)^{1/p_j}\in U_{s,p_j},
\]
and put all other prime coordinates equal to $1$.  Roots exist uniquely
in these rational nilpotent groups.  When $j>s$, this coordinate is $1$,
so $a_s\in A_{s,\{p_2,\ldots,p_s\}}$.  Projections preserve unique
roots; hence $(a_s)_s$ is a compatible family in $\mathcal A(F)$.

At level $s=j$, however, $c_j^{1/p_j}$ does not lie in $N_{j,p_j}$.
Indeed, the central layer of weight $j$ in $N_{j,p_j}$ is the free
$\mathbb Z_{p_j}$-module on the basic commutators of weight $j$.
The element in question has coordinate $1/p_j$ at $c_j$.  Its projection
to class $j-1$ is trivial by uniqueness of roots, so if it lay in
$N_{j,p_j}$ it would belong to that central integral layer, which is
impossible.  Every fixed set $S$ representing $(a_s)_s$ would therefore
have to contain all $p_j$.  No finite $S$ does, proving non-surjectivity.
The constant simplicial group on $F(x,y)$ gives the same example within
free simplicial groups of finite rank in every degree.
\end{proof}

\begin{remark}[The adelic model and its naturality]
The two steps in the construction have different roles.  The
augmentation-completion description of Quillen supplies the
nilpotent tower computing group rational pronilpotent completion.
Proposition~\ref{adelic-completion-description} then identifies its
terms and its limit explicitly with the pro-$p$ adelic model.
In particular, it proves the natural isomorphism
$\widehat N_{\mathbb Q}^{\mathrm{nil}}\cong\mathcal A(F)$.

On every nilpotent stage there is also the standard natural
identification
\[
 \mathbb Q_\infty BN_s\simeq B(N_s)_0=BA_s.
\]
Since the maps $A_{s+1}\to A_s$ are surjective, passage to the tower
and the classifying-space construction gives
\[
 \operatorname{holim}_s\mathbb Q_\infty BN_s
 \simeq B\!\left(\varprojlim_sA_s\right)=B\mathcal A(F).
\]
This is the space represented by the adelic entry.  The exceptional
finite set of primes may depend on the nilpotent stage, as expressed
by \eqref{adelic-coordinate-description};
Proposition~\ref{adelic-noninterchange} shows that replacing it by a
single finite set changes the limit.

The arithmetic square uses this explicit tower model.  The canonical map
\[
 \mathbb Q_\infty BN\longrightarrow\operatorname{holim}_s\mathbb Q_\infty BN_s
\]
is a separate space-completion comparison; its being a weak equivalence
is not needed or asserted in the construction above.
\end{remark}

\begin{theorem}[Main theorem]\label{c4}
Let $(X_1\mid Y_1)$ be a finite subpresentation of a contractible
presentation $(X\mid Y)$, and let $|Z|=|X_1|-|Y_1|$.  For
$R\in\{\mathbb Q,\mathbb F_p,\mathbb Z\}$ one has
\[
 R_\infty S_1K(X_1\mid Y_1)\simeq K(F_R(Z),1),
\]
where the field cases use the notation of Theorem~\ref{main}, and
\[
 F_{\mathbb Z}(Z)=\widehat{\Phi(Z)}_{\mathrm{nil}}
 =\varprojlim_{s\geq2}\Phi(Z)/\gamma_s\Phi(Z).
\]
\end{theorem}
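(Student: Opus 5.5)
The field cases $R=\mathbb Q,\mathbb F_p$ follow from Theorem~\ref{main}. We first identify $S_1K(X_1\mid Y_1)$ with $\overline WP_\bullet$ through Kan's comparison $P_\bullet\simeq GS_1K$ recalled in Section~\ref{s5}. Since $R_\infty$ is homotopy invariant, $R_\infty S_1K\simeq R_\infty\overline W(X_1\mid Y_1)\simeq K(F_R(Z),1)$. The real work is therefore the integral case, which we approach through Proposition~\ref{general-free-arithmetic}. That proposition identifies $\mathbb Z_\infty\overline WP_\bullet$ with $\overline W\widehat P_{\mathbb Z,\bullet}$, where $\widehat P_{\mathbb Z,\bullet}=\varprojlim_sP_\bullet/\gamma_sP_\bullet$ (the lower-central tower model of \cite[Chapter~IV]{BK}). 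It then suffices to show $\widehat P_{\mathbb Z,\bullet}\simeq\operatorname{Const}(F_{\mathbb Z}(Z))$.

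The key step is a stagewise contraction
\[
 N_{s,\bullet}=P_\bullet/\gamma_sP_\bullet\simeq\operatorname{Const}\bigl(\Phi(Z)/\gamma_s\Phi(Z)\bigr),
\]
compatible in $s$. Lemma~\ref{lem:relator-independence} gives, over $\mathbb Z$, a free direct summand $L\subseteq\mathbb Z^{(X_1)}$ spanned by the abelianized relators. Choose words $z\in\Phi(X_1)$, indexed by $Z$, whose images span a complement of $L$. Because $Y_1\sqcup Z$ maps to a basis of the abelianization, it generates every free nilpotent quotient $\Phi(X_1)/\gamma_s$, and by the Hopfian property it is a free nilpotent basis there. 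Hence $\pi_0N_{s,\bullet}\cong\Phi(Z)/\gamma_s\Phi(Z)$, and the quotient map is split by $z\mapsto z$ uniformly in $s$. For the higher homotopy we rerun the argument of Theorem~\ref{main} at the finite stage. The abelianization $\operatorname{Ab}(P_\bullet)$ has normalized complex $\mathbb Z^{Y_1}\hookrightarrow\mathbb Z^{X_1}$, which is split injective, so it is homotopy equivalent to the constant module $\mathbb Z^{Z}$. The graded pieces $\gamma_t/\gamma_{t+1}$ of the free simplicial group are the integral Lie powers $L_t^{\mathbb Z}(\operatorname{Ab}P_\bullet)$ by Witt's theorem. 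A degreewise functor preserves simplicial homotopy equivalences, and the Lie powers are taken on a constant free module, so $\pi_nL_t^{\mathbb Z}=0$ for $n>0$. Induction along the fibrations $N_{t+1,\bullet}\to N_{t,\bullet}$ then gives $\pi_nN_{s,\bullet}=0$ for all $n>0$.

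Passing to the limit is straightforward: the tower $N_{s,\bullet}$ is degreewise surjective, hence a tower of fibrations, and its $\pi_0$-tower $\Phi(Z)/\gamma_s$ is surjective as well. Milnor's sequence gives $\varprojlim^1=0$ and $\pi_n\widehat P_{\mathbb Z,\bullet}=0$ for $n>0$, together with $\pi_0\widehat P_{\mathbb Z,\bullet}=\varprojlim_s\Phi(Z)/\gamma_s=F_{\mathbb Z}(Z)$. Therefore $\mathbb Z_\infty S_1K\simeq\overline W\operatorname{Const}(F_{\mathbb Z}(Z))=K(F_{\mathbb Z}(Z),1)$. As a consistency check, the complements $z$ also map to complements after rational and pro-$p$ completion, so the stagewise contractions are compatible with the four corners of the square in Proposition~\ref{general-free-arithmetic}; the arithmetic square is then the one for $\Phi(Z)$ itself.

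The main obstacle is the integral Lie-power step. Over $\mathbb Z$, the statement that $\operatorname{gr}_t$ of a free simplicial group is $L^{\mathbb Z}_t$ of the abelianization requires Witt/Magnus freeness of the graded pieces degreewise, and we also need the homotopy invariance of $L_t^{\mathbb Z}$ on free simplicial abelian groups. I would first try to justify the latter through the split-injection description: the inclusion $\operatorname{Const}(\mathbb Z^Z)\to\operatorname{Ab}P_\bullet$ is a simplicial homotopy equivalence of degreewise free modules. If that is delicate, an alternative is to build an explicit simplicial map $\operatorname{Const}(\Phi(Z))\to P_\bullet$ and a retraction, and then check on abelianizations that the induced maps on every nilpotent quotient are weak equivalences.
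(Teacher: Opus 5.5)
Your argument is correct, but at the key stagewise step it takes a different route from the paper.

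\textbf{The paper's route.} After choosing the same complementary words $z$, the paper does not run a filtration argument over $\mathbb Z$. It shows that the substitution $u_y\mapsto r_y$, $z\mapsto z$ is an isomorphism on every free nilpotent quotient: it is an isomorphism on abelianizations, hence on each Lie power, and then one inducts up the central series. It then rewrites $P_\bullet/\gamma_sP_\bullet$ as the free nilpotent simplicial group of class $s-1$ on an explicit pointed simplicial set. This set has vertices $Z$ and $u_y$, together with one edge $y$ from $u_y$ to the base point for each $y$. Contracting these edges and applying the free nilpotent functor degreewise gives $P_\bullet/\gamma_sP_\bullet\simeq\operatorname{Const}(\Phi(Z)/\gamma_s\Phi(Z))$ directly. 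After that, $\mathbb Z_\infty S_1K\simeq\operatorname{holim}_sK(G_s,1)=K(\varprojlim_sG_s,1)$.

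\textbf{Your route.} You transplant the mechanism of Theorem~\ref{main} to $\mathbb Z$:
\begin{itemize}
\item Witt's theorem, which is natural in homomorphisms of free groups, gives $\operatorname{gr}_t\cong L_t^{\mathbb Z}(\operatorname{Ab}P_\bullet)$.
\item The split injection $\mathbb Z^{Y_1}\hookrightarrow\mathbb Z^{X_1}$ gives a simplicial homotopy equivalence $\operatorname{Ab}P_\bullet\simeq\operatorname{Const}(\mathbb Z^{Z})$ of degreewise free modules, and the degreewise functor $L_t^{\mathbb Z}$ preserves it.
\item The long exact sequences and Milnor's sequence finish the argument.
\end{itemize}
Your Hopfian computation of $\pi_0$ replaces the paper's Lie-power induction and is equally valid.

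\textbf{Comparison.} Both routes rest on the same two inputs: primitivity of the abelianized relators (Lemma~\ref{lem:relator-independence}) and the lower-central tower model of $\mathbb Z_\infty$. Yours is uniform with the field case and needs only homotopy groups. Once $\pi_n\widehat P_{\mathbb Z,\bullet}=0$ for $n>0$, the natural map $\widehat P_{\mathbb Z,\bullet}\to\operatorname{Const}(\pi_0\widehat P_{\mathbb Z,\bullet})$ is already a weak equivalence. So the explicit retraction in your fallback is unnecessary, and at the discrete level it need not even exist. The paper's contraction instead produces explicit equivalences of simplicial groups that are visibly compatible in $s$ and with completion. It reuses them to identify all four corners of the arithmetic square.
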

\begin{proof}
The field assertions, including the identification of the fundamental
groups, are Theorem~\ref{main}.  For the integral assertion, let
$P_\bullet$ be the free discrete simplicial presentation of
$K(X_1\mid Y_1)$.  Its generators in degrees above one are degeneracies.
Lemma~\ref{l.1.3} shows that the abelianized words of relations form a
primitive independent family in $\mathbb Z^{X_1}$.  Complete this family
to an integral basis and choose words $z\in\Phi(X_1)$ lifting the added
basis vectors, indexed by $Z$.

Fix $s\geq2$.  The substitution sending formal generators $u_y$ to the
words of relations $r_y$, and the complementary generators to the
chosen words $z$, is an isomorphism on abelianizations.  On each
lower-central quotient of a free group it induces the corresponding
isomorphism of free Lie powers.  Induction through the finite central
series therefore shows that it is an isomorphism on the free nilpotent
quotient of class at most $s-1$.

Consequently $P_\bullet/\gamma_sP_\bullet$ can be written as the free
nilpotent simplicial group of class at most $s-1$ on a pointed simplicial
set with constant vertices $Z$ and, for each $y$, an edge from $u_y$ to
the base point.  Its edge satisfies $d_0y=1$, $d_1y=u_y$.
Contract these edges to the base point while fixing $Z$.  Applying the
free nilpotent group functor degreewise gives
\[
 P_\bullet/\gamma_sP_\bullet\simeq
 \operatorname{Const}G_s,\qquad G_s=\Phi(Z)/\gamma_s\Phi(Z).
\]
These equivalences and contractions are compatible with $s$, since the
same words and the same edge contractions were chosen at every stage.
In particular,
$T_s:=\overline W(P_\bullet/\gamma_sP_\bullet)\simeq K(G_s,1)$.

The lower-central tower model for integral Bousfield--Kan completion
\cite[Chapter~IV]{BK} now gives
\[
 \mathbb Z_\infty S_1K(X_1\mid Y_1)
 \simeq\operatorname{holim}_sT_s
 \simeq\operatorname{holim}_sK(G_s,1).
\]
The quotient maps $G_{s+1}\to G_s$ are surjective.  Their nerves form a
tower of Kan fibrations, so its ordinary limit computes its homotopy
limit.  Since the nerve of a group commutes with limits, the last space
is $K(\varprojlim_sG_s,1)$, as required.

We can also describe this tower arithmetically, retaining the order of
limits.  Apply Proposition~\ref{asnfn} to $G_s$ and write
\[
 A_{s,S}=\prod_{p\in S}U_{s,p}(\mathbb Q_p)
          \times\prod_{p\notin S}(G_s)_p^{\wedge},\qquad
 A_s=\varinjlim_S A_{s,S}.
\]
Here $U_{s,p}$ is the unipotent $\mathbb Q_p$-group associated with
$G_s$.  This notation refers to nilpotent groups at a fixed stage;
there is no scalar extension from $\mathbb F_p$ to $\mathbb Q_p$.
Because $G_s$ is a finitely generated torsion-free nilpotent group,
$(G_s)_p^{\wedge}$ embeds in $U_{s,p}(\mathbb Q_p)$.  Thus all transition
maps $A_{s,S}\to A_{s,S'}$, $S\subseteq S'$, are injective, and their
nerves are monomorphisms, hence cofibrations of simplicial sets.
One may use a cofinal sequence of finite sets of primes.  The ordinary
colimit then computes the homotopy colimit.  Since the nerve commutes
with filtered colimits,
\[
 \operatorname{hocolim}_S BA_{s,S}\simeq
 \varinjlim_S BA_{s,S}=B\!\left(\varinjlim_S A_{s,S}\right)=BA_s.
\]
This cofibration assertion is made in simplicial sets, not in simplicial
groups.

Likewise, the transition maps of each of the towers
\[
 G_s,\qquad (G_s)_0,\qquad \prod_p(G_s)_p^{\wedge}
\]
are surjective.  Their nerves are therefore towers of Kan fibrations,
so their homotopy inverse limits are computed by ordinary inverse
limits.  Define the concrete groups
\[
 F_{\mathbb Z}(Z)=\varprojlim_sG_s,\qquad
 F_{\mathbb Q}(Z)=\varprojlim_s(G_s)_0,\qquad
 F_{\mathbb F_p}(Z)=\varprojlim_s(G_s)_p^{\wedge}.
\]
The latter two agree with the free prounipotent and free pro-$p$ groups
used above.  For the pro-$p$ identification, every finite $p$-group
quotient of $\Phi(Z)$ is nilpotent and hence factors through some $G_s$.
The rational identification is the lower-central construction of free
prounipotent completion.  Ordinary limits commute with products, giving
\[
 \varprojlim_s\prod_p(G_s)_p^{\wedge}
 =\prod_pF_{\mathbb F_p}(Z).
\]

The adelic tower also has surjective transition maps.  Indeed, an
 element of $A_s$ belongs to $A_{s,S}$ for some finite $S$.  Each of its
 components lifts at stage $s+1$: the rational unipotent maps and the
 pro-$p$ maps are surjective.  These lifts belong to $A_{s+1,S}$, with
 the same set $S$.  Thus $A_{s+1}\to A_s$ is surjective.  Consequently,
\[
 \operatorname{holim}_s BA_s\simeq B\mathcal A(Z),\qquad
 \mathcal A(Z):=\varprojlim_s\varinjlim_S A_{s,S}.
\]
The order of the two ordinary limits in this definition is essential;
no interchange is asserted.

For this particular presentation, the passage to the limit may also
be read directly from Milnor's sequence.  The compatible contractions
above give $T_s\simeq K(G_s,1)$ at every stage, so, for $n\geq2$,
\[
 0\longrightarrow\varprojlim\nolimits_s^1\pi_{n+1}T_s
 \longrightarrow\pi_n(\operatorname{holim}_sT_s)
 \longrightarrow\varprojlim_s\pi_nT_s\longrightarrow0
\]
has both outer terms zero.  Moreover,
$\pi_1(\operatorname{holim}_sT_s)\cong\varprojlim_sG_s$;
the limit is connected because the tower of $G_s$ is surjective.
The same observations apply to the towers of group nerves in the
other entries.  Here the vanishing follows from the explicit
stagewise models and surjectivity, not from an assertion that every
pro-object has vanishing derived inverse limits.

The factorization assertion in Theorem~\ref{asn} makes the homotopy
pullback at each nilpotent stage connected, with fundamental group
$G_s$ and no higher homotopy.  Proposition~\ref{cx5} therefore gives the
following \emph{noncommutative arithmetic square}:
\begin{equation}\label{as2}
\xymatrix{
 \operatorname{holim}_s T_s\ar[r]\ar[d] &
 B\!\left(\prod_pF_{\mathbb F_p}(Z)\right)\ar[d]\\
 BF_{\mathbb Q}(Z)\ar[r] & B\mathcal A(Z).}
\end{equation}
Here $B$ denotes the nerve of an abstract group.  The upper-left corner
is the homotopy pullback of the other three corners, which are now
classifying spaces of explicit groups (equivalently, the
$\overline W$-constructions on constant simplicial groups).
The maps are induced by the compatible maps at the nilpotent stages.
The preceding argument additionally identifies the upper-left corner
with $BF_{\mathbb Z}(Z)$, but the displayed form retains its role as
the homotopy limit of the arithmetic diagram.
Finally, $\overline W$ is a right Quillen functor and hence its derived
functor preserves homotopy limits.  This is the precise homotopical
form of the adjunction used in passing between simplicial groups and
classifying spaces; right adjointness alone would only give preservation
of ordinary limits.
\end{proof}

\begin{remark}[Scope of the asphericity argument]
The compatible stagewise contractions used here depend on the
hypothesis of a finite subpresentation of a contractible presentation.
They give the aspherical completions and the explicit arithmetic square
of Theorem~\ref{c4}.  This contraction argument is not asserted for
every subcomplex of an arbitrary aspherical complex.  In contrast,
the arithmetic square of Proposition~\ref{general-free-arithmetic}
applies to every free discrete simplicial group of finite type and
requires no asphericity assumption.
\end{remark}

\section*{Conclusion}
For every finite subpresentation of a contractible presentation, the rational,
mod-$p$, and integral Bousfield--Kan completions of the associated presentation
complex are Eilenberg--MacLane spaces of type $K(\pi,1)$.  The argument uses
the canonical independence of the relators, the spectral argument with its
explicit quotient-tower convergence, and the arithmetic square;
the integral argument uses compatible nilpotent-stage models and their
homotopy inverse limit.

\bibliography{Mikhovich_Whitehead_Arxiv1}{}
\bibliographystyle{plain}

\end{document}